\documentclass[11pt,reqno]{amsart}

\usepackage[T1]{fontenc}
\usepackage{lmodern}
\usepackage{amsmath,amssymb,mathtools}
\usepackage{microtype}
\usepackage{comment} 
\usepackage[a4paper,margin=29mm]{geometry}
\usepackage[colorlinks=true,linkcolor=black,citecolor=blue,urlcolor=blue]{hyperref}
\usepackage[nameinlink,noabbrev]{cleveref}
\usepackage{xcolor} 
\numberwithin{equation}{section}
\allowdisplaybreaks[2]

\newtheorem{theorem}{Theorem}[section]
\newtheorem{proposition}[theorem]{Proposition}
\newtheorem{lemma}[theorem]{Lemma}
\newtheorem{corollary}[theorem]{Corollary}
\newtheorem{conjecture}[theorem]{Conjecture}
\theoremstyle{definition}

\theoremstyle{remark}
\newtheorem{remark}[theorem]{Remark}

\newcommand{\R}{\mathbb R}
\newcommand{\Sph}{\mathbb S}
\newcommand{\B}{\mathbb B}
\newcommand{\Bt}{\mathsf B}

\newcommand{\dA}{\,\mathrm dA}
\newcommand{\ds}{\,\mathrm ds}
\newcommand{\dbda}{\,\mathrm d\beta\,\mathrm d\alpha}
\newcommand{\Aop}{\mathcal A}
\newcommand{\Bop}{\mathcal B}
\newcommand{\Q}{\mathcal Q}
\newcommand{\Eone}{\mathcal E_1}
\newcommand{\Ccc}{\mathcal C_{\mathrm{crit}}}
\DeclareMathOperator{\diver}{div}
\DeclareMathOperator{\ind}{ind}
\DeclareMathOperator{\Span}{span}
\DeclareMathOperator{\Int}{Int}
\DeclareMathOperator{\sech}{sech}

\DeclareMathOperator{\tr}{tr}
\DeclareMathOperator{\artanh}{artanh}

\title[Uniqueness of free boundary minimal annuli]{Uniqueness of free boundary minimal annuli}

\author[Davide Parise]{Davide Parise}
\address{Department of Mathematics, Zeeman Building, University of Warwick, Gibbet Hill Road, Coventry CV4 7AL, UK}
\email{Davide.Parise@warwick.ac.uk}

\author[Jonathan J. Zhu]{Jonathan J. Zhu}
\address{Department of Mathematics, University of Washington, Seattle, WA, USA}
\email{jonozhu@uw.edu}

\date{\today}
\subjclass[2020]{Primary 53A10, 35R35; Secondary 35P15, 58J50}
\keywords{Free boundary minimal annulus, critical catenoid, spherical Bernoulli problem,
homogeneous cone, Steklov eigenvalue, stream coordinates}
\hypersetup{
  pdftitle={The critical catenoid and annular Bernoulli cones},
  pdfsubject={Uniqueness of free boundary minimal annuli},
  pdfkeywords={minimal annulus, critical catenoid, spherical Bernoulli problem, Steklov eigenvalue}
}

\begin{document}

\begin{abstract}
We prove that every smooth properly embedded free-boundary minimal annulus in the unit ball is congruent to the critical catenoid. We also classify positive solutions of $(\Delta_{\Sph^2}+2)u=0$ on smooth spherical annuli with $u=0$ and $|\nabla u|=1$ on the boundary; their one-homogeneous extensions are the axially symmetric Alt-Caffarelli cones. Lastly, we also treat the case of free-boundary minimal annuli in spherical caps. 
\end{abstract}

\maketitle

\section{Introduction}\label{sec:introduction}

Fraser and Li conjectured that the critical catenoid is the only properly embedded free-boundary minimal annulus in the Euclidean unit ball \cite{FL}. In this article, we confirm this conjecture. 

\begin{theorem}\label[theorem]{thm:main}
Let $\Sigma\subset\overline{\B^3}$ be a smooth properly embedded
free-boundary minimal annulus.  There is an orthogonal transformation
$O\in \mathrm{O}(3)$ such that $O\Sigma=\Ccc$.
\end{theorem}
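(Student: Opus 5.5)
We reduce the theorem to showing that the first Steklov eigenvalue of $\Sigma$ equals $1$, and then invoke the Fraser--Schoen characterization. By the Fraser--Schoen result, a free-boundary minimal annulus in $\B^3$ whose coordinate functions are first Steklov eigenfunctions, i.e.\ with $\sigma_1(\Sigma)=1$, is congruent to $\Ccc$.

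Recall the setup. The coordinate functions $x_i$ satisfy $\Delta_\Sigma x_i=0$ in $\Sigma$ and $\partial_\eta x_i=x_i$ on $\partial\Sigma$. So $1$ is a Steklov eigenvalue, and it suffices to rule out $\sigma_1<1$. Equivalently, we must show that the nodal structure of a first eigenfunction is compatible only with eigenvalue $1$.

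The strategy, suggested by the keywords, is to pass to the cone picture.

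1. \textbf{Reduce to a cone.} Consider the support function $u=x\cdot\nu$, or alternatively harmonic ``stream coordinates'' on $\Sigma$ built from a conformal parametrization $[-T,T]\times\Sph^1$. Compare a suitably normalized function with the spherical Bernoulli problem: find $u>0$ on an annulus $\Omega\subset\Sph^2$ with $(\Delta_{\Sph^2}+2)u=0$, $u=0$ and $|\nabla u|=1$ on $\partial\Omega$. Concretely, the Gauss map or a radial projection should send $\Sigma$ to a spherical annulus.

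2. \textbf{Classify the Bernoulli annuli.} Show that any such solution is axially symmetric. The tool is a moving-plane or reflection argument on $\Sph^2$: reflect across great circles and use the overdetermined boundary condition together with the maximum principle. Annular topology rules out the degenerate cases.

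3. \textbf{Transfer back.} Deduce that $\Sigma$ is rotationally symmetric. Among rotationally symmetric free-boundary minimal surfaces in $\B^3$ one has only catenoid pieces, and the free-boundary condition fixes the critical scaling. This gives $O\Sigma=\Ccc$.

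An alternative to steps 1--3 is to argue directly with the Steklov eigenfunction. Take $\varphi$ a first Steklov eigenfunction. Use the flux/rotation Killing fields $x\times e_i$ to build test functions whose nodal domains force $\sigma_1=1$; this uses embeddedness through a two-piece property.

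\textbf{Main obstacle.} The hard step is step 1 combined with the embeddedness input: one needs a global correspondence from an arbitrary embedded annulus to a Bernoulli annulus, or equivalently, control of the nodal set of $x\cdot\nu$. To get that control, I would first try to show that $x\cdot\nu$ has a sign on each component of the complement of a single closed curve, using Courant-type nodal counting with the Jacobi operator. Here $\Sigma$ has Morse index $4$ in the free-boundary sense, and that should restrict the nodal set of $x\cdot\nu$ to exactly two domains.
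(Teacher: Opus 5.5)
\textbf{There is a genuine gap, and it lies in Step~2, not in the step you flag.}

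The classification of annular solutions of $(\Delta_{\Sph^2}+2)u=0$, $u>0$, $u=0$, $|\nabla u|=1$ is not an available tool. In the paper it is \emph{deduced from} the catenoid theorem via the dual map $X=\nabla_0u+up$ (\cref{thm:bernoulli}). So Steps 1--3 only relocate the difficulty, and your moving-plane sketch does not give an independent proof. For any such solution, the complement of $\Omega$ is a pair of disjoint convex discs, each inside an open hemisphere. Hence $\Omega$ meets every open hemisphere. There is no great circle with $\Omega$ on one side, so there is no initial position for reflecting across rotating great circles. Likewise, there is no empty half-space to start from in the cone picture. Moreover, nonrotational homogeneous solutions with higher-connectivity links exist \cite{HKM}, so a symmetry proof must use annularity at an identifiable step. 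Saying ``annular topology rules out the degenerate cases'' does not do this.

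Two smaller problems. In Step~1, only the Gauss map (not radial projection) produces the Bernoulli equation, and you would need it to be a global diffeomorphism onto an annulus; you do not address this. Also, Steps 1--3 never use $\sigma_1=1$, so your opening reduction to Fraser--Schoen plays no role.

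\textbf{Your ``main obstacle'' is misdiagnosed.} The sign of $x\cdot\nu$ is already known. By the radial graph theorem \cite{KM} (via the two-piece property), $x\cdot\nu>0$ in $\Sigma^\circ$ and $x\cdot\nu=0$ on $\partial\Sigma$. On $\Ccc$ it equals $c(1-s\tanh s)$, which has one nodal domain, not two. Also, free-boundary Morse index $4$ is known for $\Ccc$ itself, not for an arbitrary embedded annulus, so invoking it is circular.

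What is missing is a mechanism that turns this positivity into $\sigma_1=1$. The paper's mechanism runs as follows.
\begin{enumerate}
\item By the balancing $\int_{\partial\Sigma}X\ds=0$, the normalised boundary fluxes are opposite and lie in the interiors of the convex discs $D_\pm$.
\item So the axis $\R e$ through them misses $\Sigma$. The angle $\alpha$ about this axis has $d\alpha\neq0$ and restricts to a diffeomorphism on each boundary curve.
\item The stream function $d\beta=R^2\star d\alpha$ identifies $\Sigma$ with a cylinder on which $\Q_\Sigma(Ru)=\int_C(R^4u_\beta^2+u_\alpha^2-u^2)\dbda$.
\item The quotient $t=h/R$ is a Neumann null function with $t_\beta=\xi/(R^5G)>0$.
\item Differentiating in $\beta$ sends a negative Neumann mode to a negative Dirichlet mode of $\Bop$, which comparison with $t_\beta$ forbids.
\end{enumerate}
Hence negative modes depend only on $\alpha$, so $\ind\Q_\Sigma=1$ and $\sigma_1=1$, and Fraser--Schoen finishes the proof.
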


The critical catenoid conjecture is often regarded as the free-boundary analogue of the Lawson conjecture (uniqueness of the Clifford torus in $\mathbb{S}^3$), which was proven by Brendle \cite{Brendle} using a two-point maximum principle. Previously, Fraser and Schoen proved the corresponding classification under the assumption that the coordinate functions belong to the first nonzero Steklov eigenspace \cite{FS}. In our approach we prove that every embedded free-boundary minimal annulus has this spectral property and that its eigenvalue-$1$ eigenspace consists precisely of the coordinate functions.

We also classify solutions of the overdetermined problem $(\Delta_{\Sph^2}+2)u=0$ with $u=0$ and $|\nabla u|=1$ on the boundary of a spherical annulus. In particular, we show that such solutions on an annular domain must be axially symmetric; see \Cref{thm:bernoulli}. The  relationship between this overdetermined problem and free-boundary (branched) minimal surfaces, via the dual surface map $X=\nabla u+up$, is classical; see \cite{EM,HKM}. For domains with the topology of an annulus, we are able to exclude branch points and prove global embeddedness of this dual surface, and then apply the catenoid classification. The annular hypothesis is essential: nonrotational homogeneous one-phase solutions with
higher-connectivity spherical links were constructed in \cite{HKM}.

The key geometric input for our classification of the critical catenoid is the radial-graph and boundary-convexity theorem of Kusner and McGrath \cite{KM}, which follows from the two-piece property of Lima and Menezes \cite{LM}; see also McGrath and Zou \cite{MZ}. We use it to choose an axis about which the angular projection has no critical points and winds once around each boundary component. A globally defined \textit{stream} function then identifies the annulus with a cylinder. In these coordinates, differentiation relates the transformed Steklov operator with Neumann boundary conditions to a Dirichlet operator admitting a positive nullfunction. Other early progress was made by, for instance, \cite{McG18, Seo1, Seo}. 

\subsection{The critical catenoid and first Steklov eigenfunctions}

Let $\B^3=\{x\in\R^3:|x|<1\}$ and $\Sph^2=\partial\B^3$. Throughout, a free-boundary minimal annulus is a smooth compact connected surface $\Sigma$ homeomorphic to $[0,1]\times\Sph^1$, together with a smooth immersion
\[
 X:\Sigma\longrightarrow\overline{\B^3},\qquad
 X(\Sigma^\circ)\subset\B^3,\qquad X(\partial\Sigma)\subset\Sph^2,
\]
which is minimal and meets $\Sph^2$ orthogonally.  The metric $g$ on $\Sigma$ is induced by $X$.  We write $\eta$ for the outward unit conormal and use the sign convention $\Delta=\diver\nabla$. For an embedding, we identify $\Sigma$ with its image.

Let $T>0$ be the unique solution of
\begin{equation}\label{eq:T}
 T\tanh T=1,
 \qquad c=\frac{1}{T\cosh T}.
\end{equation}
The critical catenoid is
\begin{equation}\label{eq:critical-param}
 \Ccc=
 \left\{c\bigl(\cosh s\cos\alpha,\cosh s\sin\alpha,s\bigr):
 -T\leq s\leq T,\quad \alpha\in\R/2\pi\mathbb Z\right\}.
\end{equation}

In \cite{FL}, Fraser--Li made the following conjecture:

\begin{conjecture}[Critical catenoid conjecture]
Every smooth properly embedded free-boundary minimal annulus in
$\overline{\B^3}$ is congruent to $\Ccc$.
\end{conjecture}

To prove \Cref{thm:main} and confirm the above conjecture, we will prove the following spectral result. 

\begin{theorem}\label[theorem]{thm:steklov}
Under the hypotheses of \Cref{thm:main},
\begin{equation}\label{eq:steklov-conclusion}
 \sigma_1(\Sigma)=\sigma_2(\Sigma)=\sigma_3(\Sigma)=1<\sigma_4(\Sigma),
 \qquad
 \Eone=\Span\{X_1,X_2,X_3\}.
\end{equation}
\end{theorem}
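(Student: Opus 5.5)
The plan follows the outline in the introduction. We use the radial-graph and boundary-convexity theorem of Kusner--McGrath \cite{KM} to pick coordinates on $\Sigma$. We then show that the Steklov problem, pushed into these coordinates, has its low spectrum controlled by a Dirichlet operator that has a positive nullfunction. Since the coordinate functions $X_i$ are harmonic on $\Sigma$ with $\partial_\eta X_i = X_i$ on $\partial\Sigma$ (free boundary condition), the value $1$ is a Steklov eigenvalue of multiplicity at least $3$. The content of the theorem is therefore twofold: there is no Steklov eigenvalue in $(0,1)$, and $\Eone$ has dimension exactly $3$.

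\textbf{Step 1 (coordinates).} By \cite{KM}, $\Sigma$ is a radial graph over a spherical annulus whose boundary curves are convex. Using this, we choose an axis $e_3$ such that the angular function $\alpha=\arg(X_1+iX_2)$ has no critical points on $\Sigma$ and winds once around each boundary component. Because $\Sigma$ is minimal, $d\alpha$ composed with the rotation $J$ is closed on the annulus. Provided its period vanishes, it integrates to a global stream function $\beta$, and $(\alpha,\beta)$ are then conformal coordinates identifying $\Sigma$ with a cylinder $\R/2\pi\mathbb Z\times[\beta_-,\beta_+]$. If the period does not vanish, we instead use $\log|X_1+iX_2|$ together with $\alpha$, which works because $X_1+iX_2$ is harmonic and nonvanishing. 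In these coordinates the Steklov problem reads: $\Delta_0 u=0$ on the flat cylinder, with $\pm\partial_\beta u=\rho u$ on the boundary, where $\rho>0$ is the conformal factor along the boundary.

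\textbf{Step 2 (Fourier decomposition and differentiation).} Take a Steklov eigenfunction $u$ with eigenvalue $\sigma\le1$. Differentiating in $\alpha$ sends the Steklov problem with Neumann-type boundary behaviour to a problem for $v=\partial_\alpha u$. The aim is to show that $\sigma\le 1$ forces a sign condition that contradicts the existence of a positive function $w$ in the kernel of the associated Dirichlet-type operator $L$. The natural choice is $w=$ the component of the Killing field $\partial_\alpha X$ along the normal, or a combination such as $\langle X,\nu\rangle$, whose positivity follows from the radial-graph property. With such a $w$, we run a Picone / ground-state argument: write $v=w\phi$ and integrate by parts to obtain $\int w^2|\nabla\phi|^2\le (\sigma-1)\cdot(\text{positive boundary term})$. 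This forces $\sigma=1$ and $\phi$ constant. Constant $\phi$ means $u$ lies in the span of the rotation-generated functions, and together with the $\alpha$-independent part this pins $u$ down to $\Span\{X_1,X_2,X_3\}$.

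\textbf{Step 3 ($\alpha$-independent modes).} The case $\partial_\alpha u=0$ is not covered by Step 2 and needs separate treatment. Here the domain is effectively an interval in $\beta$ with a two-point boundary, and the problem reduces to showing that the only such eigenfunction with $\sigma\le1$ is the one coming from $X_3$, along with the constants at $\sigma=0$. For this we use a Sturm-type oscillation or nodal-count argument on the cylinder: a first nonconstant eigenfunction has exactly two nodal domains, by Courant's theorem and the annulus topology. We compare it with $X_3$, whose zero set is a single closed curve by the radial-graph property. With $\sigma_1=\sigma_2=\sigma_3=1$ and $\Eone$ identified, $\sigma_4>1$ follows because $\Eone$ is exactly three-dimensional.

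The main obstacle is Step 2. It requires finding the correct transformed operator and checking that the boundary terms created by differentiating the Steklov condition have a sign, which uses the convexity of $\partial\Sigma$ from \cite{KM}. It also requires a positive nullfunction that really lies in the kernel of that operator. My first attempt would be to use $w=|\nabla\alpha|^{-1}$ or the normal rotation field, and to compute $L$ directly in the $(\alpha,\beta)$ coordinates.
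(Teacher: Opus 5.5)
Your proposal has genuine gaps in each step; the decisive one is in Step 2.

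\textbf{Step 1: the stream coordinates are wrong.} The form $\star d\alpha$ is not closed. Writing $X_1+iX_2=Re^{i\alpha}$, harmonicity gives $\Delta R=R|\nabla\alpha|^2$ and $R\Delta\alpha+2\langle\nabla R,\nabla\alpha\rangle=0$. So $\alpha$ is harmonic only if $\langle\nabla R,\nabla\alpha\rangle\equiv0$, which you cannot guarantee a priori. Since $X_1+iX_2$ is harmonic but not holomorphic on $\Sigma$, neither $(\alpha,\beta)$ nor your fallback $(\log R,\alpha)$ is a conformal chart. On $\Ccc$, $\log R$ even has a critical circle at the waist, so it is not a coordinate at all. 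The closed form is $R^2\star d\alpha$, the tangential part of the rotation Killing field $e\times X$, which is divergence-free by minimality. Its period vanishes exactly because the free-boundary condition forces $\partial_\eta\alpha=0$. The resulting coordinates are orthogonal but not conformal, $g=(R^4G)^{-1}d\beta^2+G^{-1}d\alpha^2$ with $G=|\nabla\alpha|^2$. Hence the Steklov problem does not become a flat-Laplacian problem in these variables.

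The axis also needs an argument you do not supply. The paper takes $e$ parallel to $\int_{\Gamma_+}X\ds$ and uses the balancing $\int_{\partial\Sigma}X\ds=0$ together with convexity of the discs $D_\pm$ to get $\pm e\in\Int D_\pm$. This is what makes the axis miss $\Sigma$ and gives winding number one on each boundary component. Nonvanishing of $d\alpha$ in the interior comes from $X\cdot N>0$.

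\textbf{Step 2: the key mechanism is missing.} Differentiating in $\alpha$, tangent to $\partial\Sigma$, produces neither Dirichlet data nor a nullfunction. None of your candidates lies in the kernel of an operator governing derivatives of eigenfunctions. The normal part of the rotation field vanishes identically on $\Ccc$, and $\xi=X\cdot N$ solves the Jacobi equation, which is not the relevant operator. Moreover, a Steklov eigenfunction with $\sigma<1$ becomes, after writing $f=Ru$, a function with Robin data $\partial_\eta u=(\sigma-1)u$. Its derivative therefore has no Dirichlet data, so treating such eigenfunctions directly cannot work.

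The paper instead works with the index of $\Q_\Sigma$.
\begin{itemize}
\item The substitution $f=Ru$ gives $\Q_\Sigma(Ru)=\int_C(R^4u_\beta^2+u_\alpha^2-u^2)\dbda$. All eigenfunctions of the Neumann operator $\Aop$ of this form satisfy $u_\beta=0$ on $\partial C$.
\item Differentiating in the transverse direction $\beta$ gives $\partial_\beta\Aop u=\Bop(u_\beta)$, where $\Bop w=-\partial_{\beta\beta}(R^4w)-w_{\alpha\alpha}-w$ carries Dirichlet data.
\item The positive nullfunction of $\Bop$ is $\partial_\beta(X_3/R)=\xi/(R^5G)$.
\item A Hopf-lemma comparison against this nullfunction shows that $\Bop$ has no negative Dirichlet eigenvalues and a one-dimensional kernel.
\item Hence negative modes of $\Aop$ depend on $\alpha$ alone. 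The operator $-\partial_{\alpha\alpha}-1$ on a circle of period $2\pi$ then leaves only the constants; this is where degree one, and hence embeddedness, enters.
\end{itemize}
This yields index one and $\ker\Aop=\Span\{X_3/R,\cos\alpha,\sin\alpha\}$. Then $\sigma_1=1$ follows from the index criterion, since the constants together with an eigenfunction with $\sigma<1$ would span a negative plane.

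\textbf{Step 3 is not viable either.} The coefficients depend on $\alpha$, so there is no Fourier splitting. The nodal comparison with $X_3$ rules out neither eigenvalues in $(0,1)$ nor extra eigenfunctions at $1$. In the correct argument the residual modes are the $\beta$-independent ones, not the $\alpha$-independent ones, and they are handled by the explicit circle computation above.
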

Note that we have indexed the Steklov eigenvalues with multiplicity as
$0=\sigma_0<\sigma_1\leq\sigma_2\leq\cdots$. To deduce \Cref{thm:main} from \Cref{thm:steklov} we use the following characterization by Fraser and Schoen:

\begin{theorem}[Fraser--Schoen, \cite{FS}, Theorem 1.2]\label[theorem]{thm:FS}
A free-boundary minimal annulus in the unit ball whose coordinate
functions are first nonzero Steklov eigenfunctions is congruent to the
critical catenoid.
\end{theorem}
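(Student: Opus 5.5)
The plan is to prove rotational symmetry of $\Sigma$ and then identify it among catenoids by the free-boundary condition. Uniformize the annulus conformally as $\Sigma\cong[-L,L]\times\R/2\pi\mathbb Z$ with coordinates $(t,\theta)$ and metric $g=\lambda^2(dt^2+d\theta^2)$. Minimality and conformality make each $X_i$ harmonic for the flat Laplacian. The free-boundary condition $\partial_\eta X=X$ on $\partial\Sigma$ becomes $\pm\partial_tX=\lambda X$ at $t=\pm L$. So the coordinates are Steklov eigenfunctions with eigenvalue $1$, and by hypothesis $1=\sigma_1$.

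\textbf{Step 1 (variational input).} Since $\sigma_1=1$ is attained by every linear function $\langle X,v\rangle$, Courant's nodal domain theorem applies. Each such function has exactly two nodal domains, and its nodal set meets $\partial\Sigma$ in the expected way. More importantly, the Rayleigh quotient gives
\[
\int_\Sigma|\nabla\varphi|^2\dA\;\ge\;\int_{\partial\Sigma}\varphi^2\ds
\qquad\text{for every }\varphi\text{ with }\int_{\partial\Sigma}\varphi\ds=0 .
\]
I would apply this to the test functions $\varphi=\langle X,v\rangle\circ R$, where $R$ is the rotation $\theta\mapsto\theta+\tau$ of the parameter cylinder. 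Equivalently, I would test against the Fourier modes of $X$ in $\theta$. Since the Dirichlet energy is conformally invariant, the left side can be computed in the flat metric. The boundary term carries the weight $\lambda$.

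\textbf{Step 2 (Fourier decomposition).} Write $X=\sum_k a_k(t)e^{ik\theta}$, where harmonicity forces $a_k$ to be combinations of $\cosh(kt)$ and $\sinh(kt)$. Substitute the individual modes, or suitable averages of rotated copies, into the inequality of Step 1. Combined with the identity $\int_\Sigma|\nabla X|^2=\int_{\partial\Sigma}|X|^2=|\partial\Sigma|$, this should force equality in a sum of nonnegative terms. The goal is that only the modes $k=0,\pm1$ survive, and that $\lambda$ is constant on each boundary circle. The conformality relations $\langle X_t,X_\theta\rangle=0$ and $|X_t|=|X_\theta|$ then pin down the coefficients. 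Up to an orthogonal transformation, I expect $X=(\,f(t)\cos\theta,\ f(t)\sin\theta,\ h(t)\,)$ with $f=\cosh$-type and $h$ linear. That is a catenoid with axis through the origin.

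\textbf{Step 3 (identification).} A catenoid $c(\cosh s\cos\alpha,\cosh s\sin\alpha,s+s_0)$ meeting $\Sph^2$ orthogonally along both boundary circles must have $s_0=0$, by symmetry of the two orthogonality equations. The condition $\langle X,\partial_sX\rangle=0$ at $|X|=1$ then reduces to $T\tanh T=1$ and $c=1/(T\cosh T)$, which is \eqref{eq:T}. So $\Sigma$ is congruent to $\Ccc$.

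The main obstacle is Step 2. The Steklov condition has the variable weight $\lambda$, so the Fourier modes do not decouple on the boundary, and the inequality from Step 1 is not obviously diagonal. The first approach to try is to combine several test functions: rotated copies of $X$ via $\theta\mapsto\theta+\tau$, and the functions $\langle X,e\times\nu\rangle$ built from rotation Killing fields. Averaging over $\tau$ kills the cross terms, leaving an inequality that compares $\int_{\partial\Sigma}\lambda|a_k|^2$ against flat energies, in which equality is forced. If that fails, the fallback is to use the two-nodal-domain property to show that each plane through the origin meets $\partial\Sigma$ in at most four points. That should force each boundary curve to be planar and circular, after which an Alexandrov reflection or Killing-field argument gives rotational symmetry.
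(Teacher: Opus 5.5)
The paper does not prove this statement. It is imported from Fraser--Schoen and used only as a black box to pass from Theorem~\ref{thm:steklov} to Theorem~\ref{thm:main}. Judged on its own, your outline has a genuine gap. Step~2 carries all the content but is only an expectation, and the mechanism you propose for it yields nothing as set up.

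Write $M=[-L,L]\times\R/2\pi\mathbb Z$, $X_\tau=X\circ R_\tau$, and $\ell=|\partial\Sigma|=\int_{\partial M}\lambda\,d\theta$. In flat coordinates, $\Q_\Sigma(f)=\int_M|\nabla f|^2-\int_{\partial M}\lambda f^2\,d\theta$. The rotated coordinates are not admissible test functions. Balancing gives $\int_{\partial M}\lambda X_\tau\,d\theta=0$ only for $\tau=0$, so you must test with $X_\tau-m_\tau$, where $m_\tau=\ell^{-1}\int_{\partial M}\lambda X_\tau\,d\theta$.

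Sum the Rayleigh inequalities over an orthonormal basis. The left side is $\int_M|\nabla X|^2=\ell$. Because $|X|=1$ on $\partial M$, the right side is $\int_{\partial M}\lambda|X_\tau|^2\,d\theta-\ell|m_\tau|^2=\ell-\ell|m_\tau|^2$. The slack is exactly $\ell|m_\tau|^2\ge0$. Averaging in $\tau$, with or without splitting into Fourier modes, likewise leaves a slack equal to an average of squared weighted means, which need not vanish. Nothing is forced to equality.

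The infinitesimal version shows exactly what is missing. Each $\partial_\theta X_i$ is harmonic with $\partial_\nu\partial_\theta X_i=\partial_\theta(\lambda X_i)$, so
\[
\sum_i\Q_\Sigma(\partial_\theta X_i)=\frac12\int_{\partial M}\lambda_\theta\,\partial_\theta|X|^2\,d\theta=0 .
\]
Suppose $\int_{\partial M}\lambda\,\partial_\theta X\,d\theta=0$. Then each $\partial_\theta X_i$ is admissible, hence a minimiser with $\Q_\Sigma(\partial_\theta X_i)=0$, hence in $\Eone$. You would also need the genus-zero multiplicity bound $\dim\Eone\le3$ to get $\Eone=\Span\{X_1,X_2,X_3\}$; your outline never invokes it. With it, $\partial_\theta X=AX$ and rotational symmetry follows. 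But under your hypotheses, that admissibility condition is equivalent to $\lambda_\theta\equiv0$ on $\partial M$, which is essentially the symmetry being proved. Nothing in your argument produces it.

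The fallback does not supply it either. Any two disjoint strictly convex curves in open hemispheres of $\Sph^2$ meet every great circle in at most four points in total, so that bound cannot by itself force circularity. Circles are detected by planes \emph{not} through the origin, that is, by $\langle X,v\rangle-c$, and these are not eigenfunctions. For embedded annuli, the two-nodal-domain property is already automatic from the two-piece property underlying Proposition~\ref{thm:radial}.

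Circular boundaries would suffice. The Codazzi argument in the proof of Proposition~\ref{prop:no-branches}, applied to the second fundamental form, shows that it equals $c\,(dt^2-d\theta^2)$ with $c$ a nonzero real constant. Then $\partial\Sigma$ has geodesic curvature $|c|\lambda^{-2}$ in $\Sph^2$, and circles give $\lambda_\theta\equiv0$. Establishing circularity, however, is exactly the missing step.

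Finally, in Step~3 the claim that $s_0=0$ ``by symmetry'' assumes what it asserts. The condition $(s+s_0)\tanh s=1$ is invariant under $s\mapsto-s$ only when $s_0=0$. You need to use it together with $|X|=1$ at both ends.
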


\subsection{The spherical Bernoulli problem}\label{sec:statement}

Let $g_0$ be the round metric of curvature one on $\Sph^2\subset\R^3$,
and let $\Delta_0=\diver_{g_0}\nabla_0$. A smooth spherical annulus is
a connected open set $\Omega\subset\Sph^2$ whose closure is a smooth
compact annulus with two disjoint Jordan boundary curves.
We write $\nu$ for its outward $g_0$-unit conormal and assume
$u\in C^\infty(\overline\Omega)$.

Consider the overdetermined problem
\begin{equation}\label{eq:OEP}
 \begin{cases}
 (\Delta_0+2)u=0 & \text{in }\Omega,\\
 u>0 & \text{in }\Omega,\\
 u=0,\qquad |\nabla_0u|=1 & \text{on }\partial\Omega.
 \end{cases}
\end{equation}
The gradient normalisation is the same on both boundary components.
Since $u$ vanishes on the boundary and is positive in the interior,
\begin{equation}\label{eq:normal-data}
 \nabla_0u=-\nu,\qquad \partial_\nu u=-1
 \quad\text{on }\partial\Omega.
\end{equation}

With $T$ and $c$ as in \cref{eq:T}, set
\begin{equation}\label{eq:constants}
 s_*:=\tanh T=\frac1T,\qquad c=s_*\sqrt{1-s_*^2}.
\end{equation}

\begin{conjecture}[Annular spherical Bernoulli classification]
Every solution of \cref{eq:OEP} on a smooth spherical annulus is
rotationally symmetric.
\end{conjecture}

\begin{theorem}\label[theorem]{thm:bernoulli}
Let $\Omega\subset\Sph^2$ be a smooth spherical annulus, and let
$u\in C^\infty(\overline\Omega)$ satisfy \cref{eq:OEP}.
There is a unit vector $e\in\Sph^2$ such that
\begin{equation}\label{eq:classified-domain}
 \Omega=\{p\in\Sph^2:|p\cdot e|<s_*\},
\end{equation}
and, writing $s=p\cdot e$,
\begin{equation}\label{eq:classified-function}
 u(p)=c\bigl(1-s\,\artanh s\bigr).
\end{equation}
Conversely, \cref{eq:classified-domain,eq:classified-function} solve
\cref{eq:OEP}.
\end{theorem}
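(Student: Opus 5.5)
We reduce \Cref{thm:bernoulli} to \Cref{thm:main} through the dual surface map $X=\nabla_0u+u\,p$, regarded as a map $\overline\Omega\to\R^3$. This is the classical correspondence of \cite{EM,HKM}. We then read off the classification from the critical catenoid.

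\emph{Step 1: properties of the dual surface.} Extend $u$ one-homogeneously to $U(x)=|x|u(x/|x|)$ on the cone over $\Omega$. The equation $(\Delta_0+2)u=0$ is then equivalent to $U$ being harmonic, and $X=\nabla U$ restricted to $\Sph^2$. Differentiating, $dX_p(v)=\nabla^2_0u(v)+u\,v$ for $v\in T_p\Sph^2$, which is a symmetric endomorphism $A$ of $T_p\Sph^2$. Its trace is $\Delta_0u+2u=0$, so $A$ is traceless. Hence $X$ is conformal and minimal (harmonic and conformal) wherever $A\neq0$, with branch points exactly at the zeros of $A$. On $\partial\Omega$ we have $X=\nabla_0u=-\nu$, so $|X|=1$. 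Moreover $\partial_\nu X$ is parallel to the position vector, which gives the free-boundary orthogonality.

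We also need $|X|<1$ in the interior. Since $|X|^2=|\nabla_0u|^2+u^2$ and $|\nabla U|^2$ is subharmonic, the maximum principle on the cone, combined with homogeneity, gives $|X|\le1$ with equality only on $\partial\Omega$. This is the standard P-function argument: $P=|\nabla_0u|^2+u^2$ satisfies an elliptic inequality with no interior maximum unless $P$ is constant.

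\emph{Step 2: no branch points and embeddedness.} This is the main obstacle. The branch points are the zeros of the traceless Hessian $A$. Its holomorphic quadratic-differential structure means that zeros count with positive index. I would compute the total index via Poincar\'e--Hopf applied to the line field of eigen-directions of $A$, using the boundary behaviour. On $\partial\Omega$, $u=0$ and $|\nabla u|=1$ give $A(\nu,\nu)=\nabla^2_0u(\nu,\nu)=-\kappa_g$ after using the equation, and $A(\nu,\tau)=0$. So the line field is tangent to or normal to the boundary wherever $\kappa_g\ne0$, and the index sum equals $\chi(\Omega)=0$. This would exclude branch points if the boundary contributions are controlled; I expect some care is needed where $\kappa_g$ vanishes. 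Alternatively, the Gauss map of $X$ equals $p$ (the normal of the dual surface at $X(p)$ is $p$), and since $\Omega$ is embedded in $\Sph^2$ the Gauss map is injective. This should rule out branching; we would use it together with degree/Gauss--Bonnet arguments.

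For global embeddedness, the injective Gauss map suggests the following route. The hypothetical self-intersections of $X(\Omega)$ would have distinct normals, and I would try to exclude them using a Lima--Menezes two-piece style argument, or more directly by showing that $X$ is a radial graph. One then obtains a properly embedded free-boundary minimal annulus in $\overline{\B^3}$.

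\emph{Step 3: conclusion.} By \Cref{thm:main}, $O X(\Omega)=\Ccc$ for some $O\in\mathrm O(3)$. Because the Gauss map of $X$ is the identity $p\mapsto p$, $\Omega$ equals the Gauss image of $O^{-1}\Ccc$. For the catenoid with axis $e$ and the parametrization \cref{eq:critical-param}, the normal satisfies $|\nu\cdot e|=|\tanh s|\le\tanh T=s_*$, giving \cref{eq:classified-domain}. Finally, $u=X\cdot p$, the support function. Evaluating it on $\Ccc$ yields $u=c(1-s\,\artanh s)$ with $s=p\cdot e$. The converse is a direct ODE check: $u$ satisfies the Legendre-type equation $((1-s^2)u')'+2u=0$, vanishes at $s=\pm s_*$ because $s_*\artanh s_*=1$, and $|u'(s_*)|\sqrt{1-s_*^2}=1$ by the definition of $c$.
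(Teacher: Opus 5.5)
Your outline is the paper's: the dual map $X=\nabla_0u+up$, reduction to \Cref{thm:main}, reading off the Gauss image and support function, and an ODE check for the converse. But Step~2 is where the proof actually lives, and it is not carried out.

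\emph{Branch points.} The Poincar\'e--Hopf idea can be made to work. A zero of order $k$ of the holomorphic Hopf differential gives a line-field singularity of index $-k/2$, not a positive one, but only the common sign matters. The argument does, however, require $\Bt\neq0$ along $\partial\Omega$, i.e.\ $\kappa\neq0$, and you leave this open. Where $\kappa=0$ one even has $dX(\nu)=\kappa X=0$, which is a boundary branch point.

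The missing input is already in your Step~1. First show $P$ is nonconstant: if $\Bt\equiv0$ then $u=a\cdot p$, and both boundary curves would lie in one great circle. The boundary point lemma then gives $\partial_\nu P>0$. Since $\partial_\nu P=-2\Bt(\nu,\nu)=2\kappa$, this yields $\kappa>0$.

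The paper then avoids index counting altogether. It uniformises $\overline\Omega$ to a flat cylinder, where Codazzi makes $\Bt(\partial_s,\partial_s)-i\Bt(\partial_s,\partial_\theta)$ holomorphic. Its imaginary part vanishes at both ends, hence identically, so $\Bt=m(ds^2-d\theta^2)$ with $m=-e^{2\lambda}\kappa<0$.

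Your alternative via the Gauss map gives nothing. $N(p)=p$ is injective for every solution by construction, and it carries no information about the zeros of $dX=S$; a branched minimal surface can have an unbranched Gauss map at its branch points.

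\emph{Embeddedness.} This is the more serious gap. The two-piece property of Lima--Menezes is a statement about \emph{embedded} surfaces, so using it to prove embeddedness is circular. ``Show $X$ is a radial graph'' names the right target but gives no argument. What is needed:
\begin{enumerate}
\item Since $X\cdot p=u>0$, the radial map $F=X/|X|$ has Jacobian $u\det S/|X|^3\neq0$ in $\Omega$.
\item On $\partial K_\pm$, the map $X=-\nu$ is the conormal map of the strictly convex complementary disc $K_\pm$. Hence it is a diffeomorphism onto the boundary of the negative polar disc $D_\pm$; this again needs $\kappa>0$.
\item A second-order collar expansion $\rho_D(F(p,r))=\tfrac{\kappa}{2}r^2+O(r^3)$ shows that $F$ folds the $\Omega$-side of each boundary circle onto the exterior of $D_\pm$.
\item Extending $F$ over $K_\pm$ by homeomorphisms onto $D_\pm$ yields a local homeomorphism $\Sph^2\to\Sph^2$, hence a one-sheeted covering, hence a homeomorphism. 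So $F$, and therefore $X$, is injective.
\end{enumerate}
Without this or an equivalent argument, \Cref{thm:main}, which assumes embeddedness, cannot be invoked in Step~3.
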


\subsection{Outline of the argument}

Our method centres ond the observation that cylindrical coordinates $(h,R,\alpha)$ in $\mathbb{R}^3$ (around an appropriate axis) satisfy good differential equations on a free-boundary minimal annulus $\Sigma$; certainly the height $h$ is a Steklov eigenfunction with eigenvalue 1. Moreover, after twisting these into suitable \textit{stream} coordinates $(\beta,\alpha)$ on the annulus, the $\beta$ derivative of the known eigenfunction depends upon the support function on $\Sigma$, which is known to be positive. This allows us to characterise solutions of the equation satisfied by $\beta$-derivatives of eigenfunctions. In turn, implies that negative eigenfunctions may only depend on the $\alpha$, and the remaining circle operator has just one negative mode.

More concretely, in the stream coordinates $(\beta,\alpha)$, multiplication by the
cylindrical radius $R$ transforms the (modified) energy form into
\[
\Q_\Sigma(Rv)
=\int_C\bigl(R^4 v_\beta^2+v_\alpha^2-v^2\bigr)\dbda,
\]
which has associated (Neumann) operator $\mathcal{A}$. The quotient $t=h/R$ satisfies $\Aop t=0$ with Neumann data, and $t_\beta$ is proportional to the support function and hence positive. Defining a Dirichlet operator $\mathcal{B}$ by $\partial_\beta\Aop v=\Bop(v_\beta)$, we see that $t_\beta$ is positive Dirichlet 0-eigenfunction for $\Bop$, so effectively bounds the eigenvalues of $\mathcal{B}$ from below via a comparison principle argument. It follows that every negative
Neumann eigenfunction of $\Aop$ is independent of $\beta$.
The remaining circle operator is simply $\frac{d^2}{d\alpha^2}+1$, which has eigenvalues $n^2-1$, $n\in\mathbb{Z}$. It follows that $\ind\Q_\Sigma=1$ and hence $\sigma_1=1$.

The geometric and spectral parts of the proof occupy
\cref{sec:geometry,sec:stream,sec:operator,sec:spectral,sec:conclusion};
the Bernoulli dual and classification are proved in
\cref{sec:bernoulli}. 

\subsection{Spherical caps and the closed case} 

We conclude this introduction by comparing our methods in the free-boundary setting to the analogous closed setting, that is, the Lawson conjecture. In fact, in \cref{sec:spherical-caps} we describe how our methods may be used to obtain rotational symmetry for embedded free-boundary minimal annuli in spherical caps. In particular, when $\vartheta=\pi/2$, \cref{thm:cap-rigidity} gives that embedded free-boundary minimal annuli in the hemisphere must be rotationally symmetric, and hence half of the Clifford torus. That is, our method yields that any closed embedded minimal torus in $\mathbb{S}^3$ \textit{with a reflection symmetry} must be the Clifford torus. 

If one attempts to use stream coordinates on embedded minimal tori $\Sigma$ in $\mathbb{S}^3$, the formal arguments appear to be promising. However, the main obstacle seems to be finding a rotational `axis' (in this setting, a great circle) which is avoided by $\Sigma$. On the other hand, from the authors' own investigations and conversations with other experts, the naive attempt to use Brendle's proof \cite{Brendle} of the Lawson conjecture appears to fail in the free-boundary setting (except for the hemisphere). The issue is that the `ball curvature' does not seem to satisfy a sufficiently helpful inequality at the boundary. 

\begin{remark}
    While preparing this manuscript, we were made aware of the work of Chodosh and Gianocca \cite{CG}, who have proven that embedded free-boundary minimal surfaces in $\overline{\mathbb{B}^3}$ of genus zero (in particular, annuli) are embedded by first Steklov eigenfunctions. Their method uses a very clever representation of the index form by a radial factor and a nonnegative divergence free tensor field, while ours uses stream coordinates relative to an axis and is so adapted to annuli. Our proofs are distinct and have been obtained simultaneously and independently. 
\end{remark}

\subsection*{Acknowledgements} The authors would like to thank Otis Chodosh and Matilde Gianocca for graceful and helpful exchanges regarding their work \cite{CG}, as well as Bill Minicozzi and Nick Edelen for their advice and encouragement. J.Z. was supported in part by the National Science Foundation under grant DMS-2439945.

\subsection*{AI Declaration Statement} ChatGPT was used in the exploratory phase of this project to prune nonviable solution paths and was later used to check the validity of some routine arguments and grammar. The mathematical approach in this paper was not generated by an LLM, nor was an LLM used to generate the text of the paper. 

\section{Preliminaries and notation}\label{sec:preliminaries}
\subsection{The modified energy form}
For real-valued $f,h\in H^1(\Sigma)$, set
\begin{equation}\label{eq:Q}
 \Q_\Sigma(f,h)
 =\int_\Sigma\langle\nabla f,\nabla h\rangle\dA
   -\int_{\partial\Sigma}fh\ds,
\end{equation}
and write $\Q_\Sigma(f)=\Q_\Sigma(f,f)$.
The boundary term is understood in the trace sense and is continuous
on $H^1(\Sigma)\times H^1(\Sigma)$ by the trace inequality.
The index of $\Q_\Sigma$ is the maximal dimension of a subspace on which
$\Q_\Sigma$ is negative definite.  This is the index of the modified energy form, not the Morse index of the area functional.
The Steklov eigenvalue problem is
\begin{equation}\label{eq:steklov-problem}
 \Delta f=0\quad\text{in }\Sigma^\circ,
 \qquad \partial_\eta f=\sigma f\quad\text{on }\partial\Sigma.
\end{equation}
Here $\Sigma^\circ=\Sigma\setminus\partial\Sigma$.
All Sobolev spaces and quadratic forms are real. Throughout,
$C^\infty$ on a compact manifold with boundary means
smooth up to the boundary. We use $\dA$ and $\ds$ for the Riemannian
area and arclength measures with respect to the induced metric $g$. When an orientation is fixed, we also use $\dA_g$ to denote the associated area two-form; the Hodge star is specified by $\zeta\wedge\star\vartheta=\langle\zeta,\vartheta\rangle_g \dA_g$.

We summarise some basic properties of free-boundary minimal immersions in the following lemma. 

\begin{lemma}\label[lemma]{lem:coordinate-identities}
For a free-boundary minimal immersion,
\begin{equation}\label{eq:coordinate-identities}
 \Delta X=0,\qquad \partial_\eta X=X,\qquad
 \Delta|X|^2=4.
\end{equation}
In particular, $|X|<1$ in $\Sigma^\circ$ and
\begin{equation}\label{eq:flux-balance}
 \int_{\partial\Sigma}X\ds=0.
\end{equation}
In particular, every nonzero coordinate function is a Steklov eigenfunction of eigenvalue $1$.
\end{lemma}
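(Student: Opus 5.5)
We prove \Cref{lem:coordinate-identities} by standard computations in the induced metric, taking each identity in turn.

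First, $\Delta X = \vec H$ for any isometric immersion, where $\vec H$ is the mean curvature vector. Minimality gives $\vec H=0$, so $\Delta X=0$ componentwise.

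Next, take a boundary point $x\in\partial\Sigma$. There $X(x)\in\Sph^2$, and the outward unit normal of $\Sph^2$ at $X(x)$ is $X(x)$ itself. Orthogonal contact means that $\Sigma$ meets $\Sph^2$ at a right angle, so the outward conormal $dX(\eta)$ coincides with this sphere normal. Since $\partial_\eta X=dX(\eta)$, we get $\partial_\eta X=X$ on $\partial\Sigma$.

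For the third identity we use
\[
\Delta|X|^2=2\langle X,\Delta X\rangle+2|\nabla X|^2 .
\]
Because $X$ is an isometric immersion, $|\nabla X|^2=\sum_i|dX(e_i)|^2=2$ for an orthonormal frame $\{e_1,e_2\}$. Together with $\Delta X=0$ this gives $\Delta|X|^2=4$.

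Consequently $|X|^2$ is strictly subharmonic on $\Sigma$. It equals $1$ on $\partial\Sigma$, since the boundary maps into $\Sph^2$. The strong maximum principle then gives $|X|^2<1$ in $\Sigma^\circ$; the hypothesis $X(\Sigma^\circ)\subset\B^3$ already contains this, but the argument recovers it.

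For the flux balance, integrate $\Delta X=0$ over $\Sigma$ and apply the divergence theorem:
\[
0=\int_\Sigma\Delta X\dA=\int_{\partial\Sigma}\partial_\eta X\ds=\int_{\partial\Sigma}X\ds .
\]

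Finally, each coordinate function $X_i$ is harmonic and satisfies $\partial_\eta X_i=X_i$ on $\partial\Sigma$. So whenever $X_i\not\equiv0$ it solves \cref{eq:steklov-problem} with $\sigma=1$. More generally, any nonzero linear combination $a\cdot X$ does as well.

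There is no substantial obstacle in this argument. The only point that needs care is the orientation in the second step: the outward conormal must point along $+X$ rather than $-X$. This holds because $\Sigma$ lies inside the ball, so the direction leaving $\Sigma$ across its boundary is the direction leaving the ball.
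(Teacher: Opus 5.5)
Your proof is correct and follows the same route as the paper, which calls the identities standard, gets $|X|<1$ from the maximum principle for $|X|^2$, and gets the flux balance by integrating $\Delta X=0$. You have simply written out the details the paper leaves implicit, including the sign check $dX(\eta)=+X$.
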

\begin{proof}
    The identities \eqref{eq:coordinate-identities} are standard calculations for free-boundary minimal immersions. The inequality $|X|<1$ on the interior then follows from the maximum principle on $|X|^2$, and \eqref{eq:flux-balance} follows by integrating $\Delta X$ by parts.
\end{proof}

\begin{lemma}[Index criterion]\label[lemma]{lem:index-criterion}
If a free-boundary minimal annulus satisfies $\ind\Q_\Sigma=1$, then
$\sigma_1(\Sigma)=1$.
\end{lemma}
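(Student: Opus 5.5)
We argue by contradiction using the variational characterisation of Steklov eigenvalues. By \Cref{lem:coordinate-identities}, every nonzero coordinate function is a Steklov eigenfunction with eigenvalue $1$, so $\sigma_1(\Sigma)\le 1$. The coordinate functions are nonconstant, since $X(\partial\Sigma)\subset\Sph^2$ and the flux balance \eqref{eq:flux-balance} rule out a constant coordinate function on the boundary unless it vanishes there, and a harmonic function vanishing on $\partial\Sigma$ is identically zero. Moreover, by \eqref{eq:flux-balance} they have zero boundary mean and are $L^2(\partial\Sigma)$-orthogonal to constants. It therefore remains to show that $\sigma_1(\Sigma)<1$ is impossible when $\ind\Q_\Sigma=1$.

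Suppose $\sigma_1<1$, and let $f$ be a first Steklov eigenfunction, so that $\Delta f=0$, $\partial_\eta f=\sigma_1 f$ and $\int_{\partial\Sigma}f\ds=0$. Integrating by parts gives
\[
\Q_\Sigma(f)=(\sigma_1-1)\int_{\partial\Sigma}f^2\ds<0 .
\]
The constant function $1$ also satisfies $\Q_\Sigma(1)=-|\partial\Sigma|<0$. Since $f$ is a Steklov eigenfunction and $\int_{\partial\Sigma}f\ds=0$, we also get
\[
\Q_\Sigma(1,f)=\int_\Sigma\langle\nabla 1,\nabla f\rangle\dA-\int_{\partial\Sigma}f\ds=0 .
\]
The functions $1$ and $f$ are linearly independent, because $f$ has zero boundary mean and is nonzero on the boundary; a harmonic function with zero boundary trace vanishes. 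Hence $\Q_\Sigma$ is negative definite on $\Span\{1,f\}$:
\[
\Q_\Sigma(a+bf)=a^2\Q_\Sigma(1)+b^2\Q_\Sigma(f)<0\qquad\text{for }(a,b)\neq0 .
\]
This gives $\ind\Q_\Sigma\ge2$, contradicting the hypothesis. Therefore $\sigma_1\ge1$, and combined with $\sigma_1\le1$ we conclude $\sigma_1(\Sigma)=1$.

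The only delicate point is using the cross term $\Q_\Sigma(1,f)=0$, which follows from the boundary-mean-zero property of Steklov eigenfunctions with positive eigenvalue. It is obtained by integrating the equation $\Delta f=0$ against $1$, giving $\sigma_1\int_{\partial\Sigma}f\ds=0$. No regularity issues arise, since all functions involved are smooth up to the boundary.
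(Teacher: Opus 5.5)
Your proof is correct and follows the paper's argument essentially verbatim: $\sigma_1\le 1$ comes from the coordinate functions, and if $\sigma_1<1$ then $\Q_\Sigma$ is negative definite on $\Span\{1,f\}$ because $\Q_\Sigma(1,f)=-\int_{\partial\Sigma}f\ds=0$. One small imprecision: your side argument only shows that each coordinate function is either nonconstant or identically zero, not that all of them are nonconstant, but you only need one nonzero coordinate function, and $|X|=1$ on $\partial\Sigma$ guarantees that.
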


\begin{proof}
At least one coordinate function is nonconstant, so
$\sigma_1\leq1$ by \cref{lem:coordinate-identities}.
Suppose $0<\sigma_1<1$, and let $f\not\equiv0$ be a corresponding
Steklov eigenfunction so that $\Delta f =0$, $\partial_\eta f = \sigma_1 f|_{\partial\Sigma}$. Integrating by parts gives
\[
 0=\int_{\partial\Sigma}\partial_\eta f\ds
   =\sigma_1\int_{\partial\Sigma}f\ds.
\]
Consequently,
\[
 \Q_\Sigma(f)=(\sigma_1-1)\int_{\partial\Sigma}f^2\ds<0,
 \qquad
 \Q_\Sigma(1)=-|\partial\Sigma|<0,
 \qquad \Q_\Sigma(1,f)=0.
\]
Note that the trace $f|_{\partial\Sigma
}$ is nonzero, as otherwise harmonicity would give
$f=0$.  Thus $\Q_\Sigma$ is negative definite on
$\Span\{1,f\}$, contradicting the index assumption.
\end{proof}

\section{Finding the flux axis and the angular coordinate}
\label{sec:geometry}

In this section, we define the angular component of our \textit{stream} coordinates by finding a suitable axis and restricting the corresponding cylindrical coordinates. We will crucially use certain radial graph properties which are known for embedded free-boundary minimal surfaces of genus zero. The fact that there are precisely 2 boundary components will be essential in choosing the axis via the balancing condition \eqref{eq:flux-balance}. We summarise the radial graph properties for annuli as follows:

\begin{proposition}[Radial graph and boundary convexity]
\label[proposition]{thm:radial}
Let $\Sigma\subset\overline{\B^3}$ be as in \cref{thm:main}.
Then $0\notin\Sigma$, and the radial projection
\[
 \pi:\Sigma\rightarrow\Sph^2,\qquad \pi(x)=\frac{x}{|x|},
\]
is a homeomorphism onto the closure of a spherical annulus
$\Omega_X$, and a diffeomorphism from $\Sigma^\circ$ onto $\Omega_X$.
Writing $\partial\Sigma=\Gamma_+\sqcup\Gamma_-$, there are disjoint
closed geodesically convex discs $D_+,D_-\subset\Sph^2$, each contained
in an open hemisphere, such that
\begin{equation}\label{eq:radial-domain}
 \partial D_\pm=\Gamma_\pm,
 \qquad \Omega_X=\Sph^2\setminus(D_+\cup D_-).
\end{equation}
\end{proposition}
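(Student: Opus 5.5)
This proposition is essentially a repackaging of known results, and I would prove it by citing Kusner--McGrath \cite{KM}, whose input is the two-piece property of Lima--Menezes \cite{LM}, and then checking the annular specifics. The plan has four steps: show $0\notin\Sigma$; obtain the radial graph property on the interior; extend the homeomorphism to the boundary; and identify the complement of the image as two convex discs.

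\textbf{Step 1: $0\notin\Sigma$.} Suppose $0\in\Sigma$. The plane $P=T_0\Sigma$ through the origin cuts $\Sigma$ near $0$ in at least four arcs, since minimal surfaces are locally saddle-like at a point of tangency. The two-piece property says each plane through the origin divides $\Sigma$ into exactly two connected pieces. Combining this with a Courant nodal-domain argument for the Steklov eigenfunction $x\mapsto\langle x,\nu_P\rangle$ on a genus-zero surface should give a contradiction. Equivalently, I would just quote \cite{KM}, which shows the support function $\langle X,N\rangle$ never vanishes for embedded genus-zero free-boundary minimal surfaces.

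\textbf{Step 2: radial graph on the interior.} Nonvanishing of $\langle X,N\rangle$ means $d\pi$ is injective on $T\Sigma$, so $\pi$ is a local diffeomorphism on $\Sigma^\circ$. On $\partial\Sigma$ we have $X=\eta$, so $\pi$ restricts to the identity there, and orthogonality keeps $\pi$ an immersion up to the boundary. For injectivity, a ray meeting $\Sigma$ twice, combined with a plane containing that ray, would produce at least three components of $\Sigma\setminus P$ for some plane $P$, contradicting the two-piece property. This is the content of the radial graph theorem in \cite{KM}, which I would cite directly. Since $\Sigma$ is compact, $\pi$ is then a homeomorphism onto its image, and it is a diffeomorphism on the interior.

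\textbf{Step 3: boundary and the discs $D_\pm$.} The image $\pi(\Sigma)$ is a closed annulus in $\Sph^2$ bounded by the curves $\Gamma_\pm$, which are embedded because they are literally subsets of $\Sph^2$. By the Jordan curve theorem, each $\Gamma_\pm$ bounds a disc $D_\pm$ in $\Sph^2$ that is disjoint from $\pi(\Sigma^\circ)$, and $\Omega_X=\Sph^2\setminus(D_+\cup D_-)$.

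\textbf{Step 4: convexity.} This is the step I expect to be the main obstacle. By Gauss--Bonnet on $\Sigma$, $\int_\Sigma K+\int_{\partial\Sigma}\kappa_g=0$. For a free-boundary surface, the geodesic curvature of $\partial\Sigma$ in $\Sigma$ equals the normal curvature of $\Gamma$ in $\Sph^2$ up to a correction; since $\Sigma$ meets $\Sph^2$ orthogonally, the normal of $\Sigma$ along $\Gamma$ is tangent to $\Sph^2$. The geodesic curvature of $\Gamma_\pm$ as a curve in $\Sph^2$ is then the second fundamental form $A(\tau,\tau)$ of $\Sigma$. Kusner--McGrath prove that this has a sign, using the two-piece property applied to planes tangent to $\Gamma$ at a point: such a plane through the origin should separate $\Sigma$ into only two pieces, which forces local convexity of $\Gamma$ viewed from $D_\pm$. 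Local convexity plus simplicity makes each $D_\pm$ geodesically convex. Finally, since $D_+$ and $D_-$ are disjoint convex sets, neither can contain an antipodal pair, so each lies in an open hemisphere. I would take the sign statement from \cite{KM} (see also \cite{MZ}) rather than reprove it.
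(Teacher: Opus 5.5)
Your route is the paper's: the paper gives no independent argument for this proposition and simply cites \cite[Corollary~4.2]{KM}, \cite[Lemma~2.1]{MZ} and, for the spherical convexity and open-hemisphere facts, \cite{Seo}. Citing is therefore fine. However, two of the justifications you add are false and should be removed or repaired.

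In Step~2, the radial projection is \emph{not} an immersion up to the boundary. The free-boundary condition gives $X=\eta\in T\Sigma$ along $\partial\Sigma$, hence
\[
d\pi_X(\eta)=\eta-\langle\eta,X\rangle X=0 .
\]
This is exactly why the statement claims only a homeomorphism on $\Sigma$ and a diffeomorphism on $\Sigma^\circ$. Near the boundary, $\pi$ is modelled on $\tau\mapsto\tau^2$ (compare \cref{lem:collar}). For the same reason the support function does not ``never vanish'': $\xi=X\cdot N$ is positive in $\Sigma^\circ$ but vanishes identically on $\partial\Sigma$ (\cref{lem:support}). Your homeomorphism conclusion survives, since it only uses continuity, injectivity (taken from \cite{KM}) and compactness. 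But suppose you tried to prove injectivity yourself by filling the boundary circles with discs and running a covering argument, as in \cref{prop:embedding} and \cref{lem:cap-angle}. Then this degeneracy forces a second-order collar analysis, and it is the strict sign of the boundary curvature that places the interior collar on the exterior side of $D_\pm$.

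In Step~4, the open-hemisphere claim does not follow from disjointness. A closed hemisphere and a small cap in the opposite open hemisphere are disjoint geodesically convex discs with smooth boundary, and the hemisphere contains antipodal pairs. In fact, a smooth convex disc containing an antipodal pair must be a closed hemisphere. What excludes this is \emph{strict} positivity of the geodesic curvature of $\Gamma_\pm$. That strict sign comes most directly from the Hopf lemma, not from Gauss--Bonnet or a tangent-plane two-piece heuristic. Since $\Delta\xi=-|A|^2\xi\leq0$, with $\xi>0$ inside and $\xi=0$ on $\partial\Sigma$, Hopf gives $\partial_\eta\xi<0$. On the other hand, with $A(V,W)=\langle D_VN,W\rangle$,
\[
\partial_\eta\xi=\langle\eta,D_\eta N\rangle=A(\eta,\eta)=-A(T,T),
\]
and $A(T,T)$ is, up to sign, the geodesic curvature of $\Gamma_\pm$ in $\Sph^2$, because $N$ is tangent to $\Sph^2$ along $\partial\Sigma$. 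This is the analogue of the computation in \cref{lem:cap-angle}. Given strictly positive curvature, convexity follows from \cite[Proposition~2.1]{BL}, and hemisphere containment follows from the $v_K$ argument in \cref{lem:boundary-B}.

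Finally, the sketches in Steps~1--2 (``four arcs plus Courant'', ``a ray meeting $\Sigma$ twice yields three components'') are not arguments as written. Since you fall back on \cite{KM} there anyway, cite rather than sketch.
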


The radial-graph and convexity conclusions are
\cite[Corollary 4.2]{KM}; see also \cite[Lemma 2.1 and its proof]{MZ} for the global homeomorphism and the complementary convex discs. The spherical convexity facts, including containment in an open hemisphere, are recorded in \cite[Section~3 and Lemma~4.2 of the arXiv version]{Seo}. Note that the radial graph property is, at least locally, equivalent to the following positivity of the support function. 

\begin{lemma}\label[lemma]{lem:support}
There is a smooth unit normal $N$ on $\Sigma$ such that the support function
\begin{equation}\label{eq:positive-support}
 \xi:=X\cdot N>0\quad\text{in }\Sigma^\circ,
 \qquad \xi=0\quad\text{on }\partial\Sigma.
\end{equation}
\end{lemma}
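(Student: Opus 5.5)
The argument will derive the sign of $\xi$ from \Cref{thm:radial}, using a local identity that links the support function to the differential of the radial projection. Since $\Sigma$ is an embedded annulus it is orientable, so we may fix a smooth unit normal $N_0$. On the boundary, the free-boundary condition $\partial_\eta X=X$ from \Cref{lem:coordinate-identities} shows that $X=\eta$ is tangent to $\Sigma$. Hence $X\cdot N_0=0$ on $\partial\Sigma$, for either choice of sign of the normal.

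In the interior, we use that $0\notin\Sigma$ and compute the differential of $\pi(x)=x/|x|$ restricted to $T_x\Sigma$. For $v\in T_x\Sigma$,
\[
 d\pi_x(v)=\frac{1}{|x|}\Bigl(v-\frac{(x\cdot v)}{|x|^2}\,x\Bigr).
\]
This vanishes exactly when $v$ is parallel to $x$. So $d\pi_x|_{T_x\Sigma}$ is singular if and only if $x\in T_x\Sigma$, which is the condition $X\cdot N_0(x)=0$. A cleaner way to see this is to compare area forms. The pullback of the spherical area form under $\pi$ is
\[
 \pi^*\dA_{\Sph^2}=\frac{X\cdot N_0}{|X|^3}\,\dA_g,
\]
because the spherical area element is the solid-angle form $\frac{x\cdot n}{|x|^3}\dA$. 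By \Cref{thm:radial}, $\pi$ is a diffeomorphism from $\Sigma^\circ$ onto $\Omega_X$. Its Jacobian therefore never vanishes on $\Sigma^\circ$, so $X\cdot N_0\neq0$ there.

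It remains to conclude. The surface $\Sigma^\circ$ is connected (an open annulus) and $X\cdot N_0$ is continuous, so it has a constant sign on $\Sigma^\circ$. We set $N=\pm N_0$ so that $\xi=X\cdot N>0$ in $\Sigma^\circ$, and we already have $\xi=0$ on $\partial\Sigma$. Smoothness of $N$ and $\xi$ is inherited from $N_0$ and $X$.

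Once \Cref{thm:radial} is granted, nothing here is a real obstacle. The only points needing care are orientability, which follows from embeddedness in $\R^3$ (or directly from the annulus topology), and checking the solid-angle formula for the Jacobian of $\pi$. The substantive content lies entirely in the diffeomorphism statement of \Cref{thm:radial}.
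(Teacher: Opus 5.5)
Your argument is correct and follows the paper's route: the paper deduces this lemma from the radial graph property in \cref{thm:radial}, remarking that the two are locally equivalent. Your identity $\pi^*\dA_{\Sph^2}=|X|^{-3}(X\cdot N_0)\dA_g$, together with the tangency $X=\eta$ on $\partial\Sigma$ and the connectedness of $\Sigma^\circ$, is exactly that deduction; the paper also notes a direct alternative via the two-piece property, as in \cite[Proposition~3.12]{NZcaps}.
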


One may also prove the above lemma directly using, for instance, the two-piece property as in \cite[Proposition 3.12]{NZcaps}.

Let $e\in\mathbb{S}^2$ and consider the axis $\mathbb{R}e$. Then we have cylindrical coordinates on $\mathbb{R}^3$ defined by 
\begin{equation}\label{eq:cylindrical}
h=X\cdot e,\qquad R=|X-he|,
\qquad X=R e_r(\alpha)+he,
\end{equation}
where 
\[
e_r(\theta)\coloneqq\cos\theta\,e_1+\sin\theta\,e_2
\]
and $(e_1,e_2,e)$ is a fixed oriented orthonormal basis.

We may consider the angular projection map $\mathbb{R}^3\to S(e^\perp)$, where $S(e^\perp)$ is the unit circle in the 2-plane $e^\perp$, given explicitly (for $x\notin \mathbb{R}e$) by
\begin{equation} \label{eq:angular-projection}
    \mathcal P_e(x) = \frac{x-(e\cdot x)e}{\sqrt{|x|^2-(e\cdot x)^2}} \in S(e^\perp),
\end{equation}

First, we establish that $\alpha$ will give a suitable angular coordinate on the boundary, which essentially relies upon convexity of the boundary.

\begin{lemma}\label[lemma]{lem:cap-flux}
Let $D\subset\mathbb S^2$ be a closed geodesically convex disc
contained in an open hemisphere, with smooth Jordan boundary
$\Gamma$. Then
\[
F:=\int_\Gamma x\,\mathrm{d}s\neq0,
\qquad
e_D:=\frac{F}{|F|}\in\operatorname{Int}D.
\]
For every $e\in\operatorname{Int}D$, angular projection about the
axis $\mathbb{R}e$ restricts to a diffeomorphism
$\Gamma\to\mathbb R/2\pi\mathbb Z$.
\end{lemma}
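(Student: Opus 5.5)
Three things need proving: $F\neq0$, the direction $F/|F|$ lies in $\Int D$, and projection about any interior axis is a diffeomorphism on $\Gamma$. The approach is to work entirely on $\Sph^2$ and use convexity of $D$ through its supporting great circles.

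\textbf{Step 1: $F/|F|$ is interior.} Since $D$ is a closed convex disc in an open hemisphere, for each boundary point $p\in\Gamma$ there is a supporting great circle through $p$. Equivalently there is a unit vector $n_p$ with $n_p\cdot p=0$ and $D\subset\{x: x\cdot n_p\ge 0\}$; we can take $n_p$ to be the inward unit conormal of $\Gamma$ at $p$. Then $F\cdot n_p=\int_\Gamma x\cdot n_p\,\mathrm ds$. The integrand is $\ge0$, and it is not identically zero, because $\Gamma$ is a Jordan curve bounding a disc with nonempty interior and so is not contained in a great circle. Hence $F\cdot n_p>0$ for every $p\in\Gamma$, which already gives $F\neq0$.

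Since $D$ is the intersection of the closed hemispheres $\{x\cdot n_p\ge0\}$, the unit vector $e_D$ then lies in $D$. It avoids every supporting great circle, so it is not on $\Gamma$, and therefore $e_D\in\Int D$. To make this rigorous I would use the standard fact that a closed convex set in an open hemisphere is the intersection of its supporting closed hemispheres. Strictly, the statement only needs the strict inequality for all supporting hemispheres at all $p$: if $e_D\in\Gamma$, taking $p=e_D$ would give $e_D\cdot n_p=0$, contradicting $F\cdot n_p>0$.

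\textbf{Step 2: projection about an interior axis.} Fix $e\in\Int D$; then $\pm e\notin\Gamma$, so $\mathcal P_e$ is smooth near $\Gamma$. Projecting $\Sph^2\setminus\{\pm e\}$ to $S(e^\perp)$ amounts to taking the longitude about the pole $e$. It therefore suffices to show that each open meridian half great circle from $e$ to $-e$ meets $\Gamma$ exactly once and transversally.

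For existence, the meridian starts at $e\in\Int D$ and ends at $-e$, which lies outside $D$ because $D$ is contained in an open hemisphere and so contains no antipodal pair. So the meridian crosses $\Gamma$.

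For uniqueness, $D$ lies in an open hemisphere $H$ containing $e$, and within $H$ the geodesic convexity of $D$ makes the intersection of $D$ with each geodesic arc from $e$ connected. On the other hand, $D\cap(\text{meridian})$ cannot reach the boundary of $H$. So it is an arc $[e\text{-side},q]$ with a single exit point $q$.

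For transversality, if the meridian were tangent to $\Gamma$ at $q$, the supporting great circle at $q$ would contain the meridian. Then $e$ would lie on that supporting great circle, contradicting $e\in\Int D$, since interior points satisfy $e\cdot n_q>0$.

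It follows that $\Gamma\to\R/2\pi\mathbb Z$ is a bijective local diffeomorphism between compact circles, hence a diffeomorphism.

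\textbf{Main obstacle.} The main obstacle is the uniqueness and transversality in Step 2. There I need the strict form of supporting hemispheres at every boundary point: interior points lie strictly inside every supporting hemisphere. This uses that $\Int D$ is open and the hemispheres are closed, and I would verify it carefully via the smoothness of $\Gamma$.
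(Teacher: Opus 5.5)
Your proof is correct and follows essentially the paper's route. Step 1 shows $F\cdot n>0$ for every supporting normal because $\Gamma$ lies in no great circle. Step 2 uses convexity along meridians and the strict supporting inequality at an interior pole to get bijectivity and transversality. The only variation is that you obtain $e_D\in D$ from the outer description $D=\bigcap_{p\in\Gamma}\{x\cdot n_p\geq 0\}$, whereas the paper gets $F\in K=\{rx:r\geq0,\ x\in D\}$ from positive Riemann sums and then rules out $F\in\partial K$ with a supporting hyperplane.

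One small fix: a great circle also bounds discs with nonempty interior, so your stated reason does not show that $\Gamma$ is not contained in a great circle. The correct reason is that a Jordan curve inside a great circle must be the whole great circle, which cannot lie in an open hemisphere.
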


\begin{proof}
Choose $p\in\mathbb S^2$ such that $p\cdot x>0$ on $D$.
Compactness and geodesic convexity imply that
\[
K:=\{rx:r\geq0,\ x\in D\}
\]
is a closed convex cone. Indeed, the direction of a positive
linear combination of two points of $D$ lies on their shorter
great-circle segment. Polar coordinates also give
\[
\operatorname{Int}_{\mathbb R^3}K
=
\{rx:r>0,\ x\in\operatorname{Int}_{\mathbb S^2}D\}.
\]

Since $p\cdot F=\int_\Gamma p\cdot x\,\mathrm{d}s>0$, we have
$F\neq0$. Positive Riemann sums and closedness give $F\in K$.
If $F\in\partial K$, the supporting-hyperplane theorem yields a
nonzero linear functional $\ell$ with
\[
\ell\geq0\quad\text{on }K,
\qquad
\ell(F)=0.
\]
The supporting hyperplane passes through the origin because
$K$ is a cone: applying the supporting inequality at $0$ and
$2F$ forces $\ell(F)=0$. Since $\Gamma$ is a Jordan curve
contained in an open hemisphere, it cannot be contained in a
great circle. Thus $\ell$ is positive on a nonempty open arc
of $\Gamma$, and
\[
0=\ell(F)=\int_\Gamma\ell(x)\,\mathrm{d}s>0,
\]
which is a contradiction. Hence $F\in\operatorname{Int}K$, and so 
$e_D\in\operatorname{Int}D$.

Now fix $e\in\operatorname{Int}D$. Recall that a geodesically convex disc is star-shaped (about any interior point $e$). For $v\in S(e^\perp)$, consider the oriented meridian $\gamma_v(t)=\cos t\,e+\sin t\,v$ for $ 0\leq t\leq\pi.$ Convexity, $e\in\Int D$, and $-e\notin D$ imply $\gamma_v^{-1}(D)=[0,\tau_v]$ for some $0<\tau_v<\pi$, with $\gamma_v([0,\tau_v))\subset\Int D$. Thus each such meridian meets $\Gamma$ exactly once, and $\mathcal P_e|_\Gamma$ is bijective.

It remains to check transversality. Let $\nu_D$ be the inward unit conormal to $\Gamma$ in $\mathbb S^2$. At $x\in\Gamma$, the angular projection is given by  
\[\mathcal P_e(x) = \frac{x-(e\cdot x)e}{\sqrt{1-(e\cdot x)^2}}.\]
Moreover, the tangent plane to $\partial K$ at $x$ is a supporting plane, so that $\nu_D(x)\cdot y\geq0$ for $y\in K.$ Since $e\in\operatorname{Int}K$, this inequality is strict at $e$: otherwise a small perturbation of $e$ in the direction $-\nu_D(x)$ would contradict the supporting inequality. Orient the unit tangent $T$ on $\Gamma$ so that $x\times T=\nu_D(x)$, and identify $S(e^\perp)$ with $\mathbb R/2\pi\mathbb Z$ using the orientation induced by $e$. Differentiating the formula for $\mathcal P_e(x)$, the corresponding angular differential satisfies
\[
d\alpha_x(T) = \frac{(e\times x)\cdot T}{1-(e\cdot x)^2} = \frac{e\cdot\nu_D(x)}{1-(e\cdot x)^2}>0.
\]
The differential of $\mathcal P_e|_\Gamma$ is therefore nonzero
at every point. The inverse function theorem and the preceding
bijectivity then imply that $\mathcal P_e|_\Gamma:\Gamma\to S(e^\perp)$ is a diffeomorphism.
\end{proof}

We now use the balancing condition \eqref{eq:flux-balance} to define an axis that avoids $\Sigma$, and restrict the cylindrical coordinates \eqref{eq:cylindrical} to the surface. By slight abuse of notation we will continue to denote these by $h,R,\alpha$. In particular, $\alpha$ will form a good angular coordinate on the surface, with nonvanishing differential (including on the boundary).

\begin{proposition}[An angular submersion of period $2\pi$]
\label[proposition]{prop:angle}
Let $\Sigma$ be as in \cref{thm:main}. 
There is a unit vector $e\in\Sph^2$ such that
$X(\Sigma)\cap\R e=\varnothing$. 

Moreover, fixing an oriented orthonormal basis
$(e_1,e_2,e)$, the cylindrical coordinates 
\[h=X\cdot e,\qquad R=|X-he|,
\qquad X=R e_r(\alpha)+he,\]
define smooth real-valued functions $h,R$ on $\Sigma$ and a smooth
circle-valued map
$\alpha:\Sigma\to\R/2\pi\mathbb Z$ such that
\begin{equation}\label{eq:angular-properties}
R>0\quad\text{on }\Sigma,
\qquad d\alpha\neq0\quad\text{on }\Sigma.
\end{equation}
Finally, each restriction
$\alpha|_{\Gamma_\pm}:\Gamma_\pm\to\R/2\pi\mathbb Z$
is a diffeomorphism.
\end{proposition}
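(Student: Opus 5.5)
The plan is to choose the axis using the balancing condition \eqref{eq:flux-balance} together with \cref{lem:cap-flux}, and then get $d\alpha\neq0$ by factoring $\alpha$ through the radial projection of \cref{thm:radial}.

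\textbf{Choice of axis.} Write $F_\pm:=\int_{\Gamma_\pm}x\,\mathrm ds$. By \cref{thm:radial}, each $\Gamma_\pm=\partial D_\pm$ bounds a closed geodesically convex disc contained in an open hemisphere, so \cref{lem:cap-flux} gives $F_\pm\neq0$ and $F_\pm/|F_\pm|\in\Int D_\pm$. The balancing condition \eqref{eq:flux-balance} reads $F_++F_-=0$. Hence
\[
e:=\frac{F_+}{|F_+|}\in\Int D_+,\qquad -e=\frac{F_-}{|F_-|}\in\Int D_- .
\]
This is the only place where having exactly two boundary components is used.

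\textbf{The axis avoids $\Sigma$.} By \cref{thm:radial}, $0\notin\Sigma$ and $\pi(\Sigma)=\overline{\Omega_X}=\Sph^2\setminus(\Int D_+\cup\Int D_-)$. This set contains neither $e$ nor $-e$. Any point of $X(\Sigma)\cap\R e$ is nonzero, so its radial projection would be $\pm e$; this is impossible. Therefore $X(\Sigma)\cap\R e=\varnothing$, and so $R>0$ on $\Sigma$. Consequently $R=|X-he|$ is smooth, and $\alpha=\mathcal P_e\circ X$ is a smooth circle-valued map, using \eqref{eq:angular-projection} and the identification $S(e^\perp)\cong\R/2\pi\mathbb Z$ via $(e_1,e_2)$.

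\textbf{Nondegeneracy of $d\alpha$.} $\mathcal P_e$ is invariant under positive dilations, so $\alpha=\mathcal P_e|_{\Sph^2}\circ\pi$.
\begin{itemize}
\item \emph{Interior.} On $\Sph^2\setminus\{\pm e\}$, $\mathcal P_e$ is the longitude map, which is a submersion. On $\Sigma^\circ$, $\pi$ is a diffeomorphism onto $\Omega_X$ by \cref{thm:radial}. Hence $d\alpha\neq0$ on $\Sigma^\circ$.
\item \emph{Boundary.} Here $\pi$ is the identity on $\Gamma_\pm$, since $|X|=1$ there. \cref{lem:cap-flux} with $D=D_+$ and $e\in\Int D_+$ shows that $\alpha|_{\Gamma_+}$ is a diffeomorphism. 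Applying it to $D_-$ with $-e\in\Int D_-$, the angular projection about the same line $\R(-e)=\R e$ agrees with $\alpha$ up to the orientation reversal $\alpha\mapsto-\alpha$. So $\alpha|_{\Gamma_-}$ is also a diffeomorphism.
\end{itemize}
In particular $d\alpha$ is nonzero on tangent vectors to $\partial\Sigma$, so $d\alpha\neq0$ on all of $\Sigma$.

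\textbf{Expected obstacle.} The substantive step is the axis choice: we need a single line piercing both caps $\Int D_\pm$, and flux balance provides exactly this via $e_{D_-}=-e_{D_+}$. The rest is bookkeeping: smoothness up to the boundary, and checking that the orientation reversal for $D_-$ does not affect the diffeomorphism claim.
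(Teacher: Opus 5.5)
Your proof is correct, and its skeleton matches the paper's. The axis comes from the balancing condition \eqref{eq:flux-balance} together with \cref{lem:cap-flux}. The avoidance $X(\Sigma)\cap\R e=\varnothing$ comes from the radial image in \cref{thm:radial}. The boundary diffeomorphisms come from \cref{lem:cap-flux} applied to $D_+$ with pole $e$ and to $D_-$ with pole $-e$. Your orientation remark for $\Gamma_-$ is harmless; the paper simply notes that $\mathcal P_{-e}=\mathcal P_e$ as maps into $S(e^\perp)$.

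The one step you handle differently is the interior nonvanishing of $d\alpha$. You factor $\alpha=\mathcal P_e\circ\pi$ and combine two facts: the diffeomorphism clause of \cref{thm:radial}, and that the longitude map is a submersion on $\Sph^2\setminus\{\pm e\}$. The paper instead writes $d\alpha_q(v)=\langle e_\theta(q),dX_q(v)\rangle/R(q)$. If $d\alpha_q=0$, then $N(q)=\pm e_\theta(q)$, hence $\xi(q)=\pm X(q)\cdot e_\theta(q)=0$, contradicting \cref{lem:support}.

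These are the same fact in two guises, since the area Jacobian of radial projection is $\xi/|X|^3$; the paper's remark just before \cref{lem:support} says as much. Your version is shorter and needs nothing beyond \cref{thm:radial}. The paper's version is pointwise and produces the explicit formula for $d\alpha$, from which $dX(\eta)=X$ gives $d\alpha(\eta)=0$. It is also the argument reused in the spherical-cap setting of \cref{lem:cap-angle}, where the contradiction comes from $N_0\neq0$.
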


\begin{proof}
Let
\[
F_\pm:=\int_{\Gamma_\pm}X\ds.
\]
By the balancing \cref{eq:flux-balance}, $F_-=-F_+$. Applying \cref{lem:cap-flux} to the
geodesically convex discs $D_\pm$ gives
\[
F_+\neq0,\qquad e:=\frac{F_+}{|F_+|}\in\Int D_+,\qquad -e=\frac{F_-}{|F_-|}\in\Int D_-.
\]
The radial projection is defined on the whole annulus and,
by \eqref{eq:radial-domain}, its image contains neither $e$
nor $-e$. Therefore $X(\Sigma)\cap\R
e=\varnothing$, and $R>0$ everywhere, including on the boundary.

Now write $x_j=X\cdot e_j$ for $j=1,2$, so that $R^2=x_1^2+x_2^2$. Since $R>0$, the formula
\[
e^{i\alpha}=\frac{x_1+ix_2}{R}
\]
defines a smooth circle-valued map on $\Sigma$. Locally one may choose a smooth
real-valued lift of $\alpha$ and write $x_1=R\cos\alpha$ and $x_2=R\sin\alpha$.
Differentiating these identities gives
\begin{equation}\label{eq:dalpha}
d\alpha=\frac{x_1\,dx_2-x_2\,dx_1}{R^2}.
\end{equation}
Any two local lifts differ by a locally constant integer multiple of $2\pi$, so
their differentials agree. Consequently, \cref{eq:dalpha} defines a global smooth
one-form (without requiring a globally real-valued angle). Along $\Sigma$, set
$e_\theta=e\times e_r(\alpha) =-\sin\alpha\,e_1+\cos\alpha\,e_2.$ The decomposition
$X=R e_r(\alpha)+he$ implies that $X\cdot e_\theta=0.$ Moreover, \cref{eq:dalpha}
gives, for $q\in\Sigma$ and $v\in T_q\Sigma$,
\[
d\alpha_q(v) = \frac{\langle e_\theta(q),dX_q(v)\rangle}{R(q)}.
\]
Suppose that $d\alpha_q=0$ at some $q\in\Sigma^\circ$. Then
$e_\theta(q)$ is perpendicular to $dX_q(T_q\Sigma)$. Since this tangent plane has
codimension one and $e_\theta(q)$ is a unit vector, the normal from
\cref{lem:support} satisfies $N(q)=\pm e_\theta(q)$, and so
\[
X(q)\cdot N(q) = \pm X(q)\cdot e_\theta(q)=0.
\]
Since we had $X\cdot N>0$ in the interior, we conclude that $d\alpha\neq0$ on $\Sigma^\circ$.

For $x\in\partial\Sigma \subset \mathbb{S}^2\setminus\{e,-e\}$, the angular projection 
\eqref{eq:angular-projection} is precisely \[\mathcal
P_e(X)=\mathcal
P_{-e}(X)=e_r(\alpha).\] Since $e\in\Int D_+$, \cref{lem:cap-flux} shows that $\mathcal
P_e|_{\Gamma_+}$ is a diffeomorphism onto $S(e^\perp)$. Applying the same lemma to
$D_-$ with pole $-e$ gives the corresponding conclusion for $\Gamma_-$.

Identifying $S(e^\perp)$ with $\R/2\pi\mathbb Z$ via
$\theta\mapsto e_r(\theta)$, we conclude that both
$\alpha|_{\Gamma_\pm}:\Gamma_\pm \to \R/2\pi\mathbb Z$ are diffeomorphisms. In particular,
\[
d\alpha_q(T)\neq0
\qquad
\text{for every }q\in\partial\Sigma
\text{ and }0\neq T\in T_q\partial\Sigma.
\]
This establishes the boundary nonvanishing in
\eqref{eq:angular-properties}. Notice that it is the
\emph{tangential} component that is nonzero: the free-boundary
condition $dX(\eta)=X$ gives
\[
d\alpha(\eta) = \frac{\langle e_\theta,dX(\eta)\rangle}{R} = \frac{\langle e_\theta,X\rangle}{R}=0.
\]
\end{proof}

Note that the boundary diffeomorphisms above imply
\[
\left|\int_{\Gamma_\pm}d\alpha\right|=2\pi.
\]
Since either boundary component generates $H_1(\Sigma;\mathbb Z)$, the angular map
has winding number $\pm1$ on a generator of the annulus. 

\section{Global stream coordinates}
\label{sec:stream}

In this section, we will complete our global \textit{stream} coordinates by finding a coordinate $\beta$ conjugate to the angular coordinate $\alpha$. In what follows, we fix the axis $\mathbb{R}e$ from \cref{prop:angle} and an orthonormal frame $(e_1,e_2,e_3:=e)$, so that the ambient coordinates $X_1=R\cos\alpha$, $X_2=R\sin\alpha$, and $X_3=h$. Also set
\begin{equation}\label{eq:G}
 G:=|\nabla\alpha|^2>0.
\end{equation}
Here the one-form $d\alpha$ is given by \cref{eq:dalpha}, and
$\nabla\alpha$ denotes its metric-dual vector field. (A real-valued angle is only used locally.)

First, we verify the fundamental equations satisfied by the cylindrical coordinates $R,\alpha$. Intuitively, one should expect a (weighted) harmonicity for the angular coordinate $\alpha$, since rotation generates a Killing field on $\mathbb{R}^3$ and $\Sigma$ is minimal. Similarly, the distance $R$ to the axis should inherit good equations from the defining equations for $X$; using its strict positivity on $\Sigma$ will later give a helpful reparametrisation of the quadratic form $\mathcal{Q}_\Sigma$.

\begin{lemma}\label[lemma]{lem:weighted-angle}
In $\Sigma$, we have
\begin{equation}\label{eq:weighted-angle}
 \Delta R=RG,
 \qquad \diver(R^2\nabla\alpha)=0,
\end{equation}
while on $\partial\Sigma$,
\begin{equation}\label{eq:R-alpha-boundary}
 \partial_\eta R=R,
 \qquad \partial_\eta\alpha=0.
\end{equation}
\end{lemma}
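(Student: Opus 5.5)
The plan is to work with the complex-valued function $w:=x_1+ix_2=Re^{i\alpha}$, where $x_j=X\cdot e_j$. It is globally smooth on $\Sigma$, and since $R>0$ by \cref{prop:angle}, locally we may write $w=e^{\log R+i\alpha}$ with a real-valued lift of $\alpha$. By \cref{lem:coordinate-identities}, $\Delta w=0$ in $\Sigma$ and $\partial_\eta w=w$ on $\partial\Sigma$. Put $\phi:=\log R+i\alpha$, which is smooth because $R>0$. Then the chain rule gives
\[
0=\Delta e^{\phi}=e^{\phi}\bigl(\Delta\phi+\langle\nabla\phi,\nabla\phi\rangle\bigr),
\]
where the bracket is the complex-bilinear extension of the metric. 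Since $e^\phi\neq0$, this forces $\Delta\phi=-\langle\nabla\phi,\nabla\phi\rangle$.

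Next we separate real and imaginary parts. We have
\[
\langle\nabla\phi,\nabla\phi\rangle=|\nabla\log R|^2-|\nabla\alpha|^2+2i\langle\nabla\log R,\nabla\alpha\rangle .
\]
The real part gives $\Delta\log R=G-|\nabla\log R|^2$. Since $\Delta\log R=\Delta R/R-|\nabla R|^2/R^2$, this simplifies to $\Delta R=RG$. The imaginary part gives $\Delta\alpha=-2R^{-1}\langle\nabla R,\nabla\alpha\rangle$, which is exactly $R^{-2}\diver(R^2\nabla\alpha)=0$. Both identities only involve $d\alpha$, through \cref{eq:dalpha}, so the local choice of lift plays no role and the identities hold globally.

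For the boundary, the relation $\partial_\eta w=w$ becomes $e^\phi\partial_\eta\phi=e^\phi$, so $\partial_\eta\phi=1$. Its real part gives $\partial_\eta R/R=1$ and its imaginary part gives $\partial_\eta\alpha=0$. The latter was already observed at the end of the proof of \cref{prop:angle} via $d\alpha(\eta)=\langle e_\theta,X\rangle/R=0$. Alternatively, $\partial_\eta R=R$ follows directly from $dX(\eta)=X$ and $dR=\langle e_r(\alpha),dX\rangle$, since $\langle e_r,X\rangle=R$.

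There is no serious obstacle; the computation is routine. The only point needing care is smoothness up to the boundary, which holds because $R>0$ on all of $\Sigma$, including $\partial\Sigma$, by \cref{prop:angle}. This makes $\log R$ and the local lifts of $\alpha$ smooth, so the chain-rule computation is valid everywhere.
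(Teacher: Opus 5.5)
Your proposal is correct and follows essentially the same route as the paper: write the complex coordinate as $Re^{i\alpha}$ with a local lift of $\alpha$, use $\Delta(Re^{i\alpha})=0$ and $\partial_\eta(Re^{i\alpha})=Re^{i\alpha}$, and separate real and imaginary parts. The only difference is cosmetic: the paper multiplies by $e^{-i\alpha}$ directly, while you pass through $\phi=\log R+i\alpha$.
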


\begin{proof}
On each sufficiently small neighbourhood, choose a smooth real-valued lift of the angular map. Since the coordinate functions are harmonic,
\[
\begin{aligned}
0&=e^{-i\alpha}\Delta(Re^{i\alpha})\\
 &=\Delta R-R|\nabla\alpha|^2
 +i\bigl(2\langle\nabla R,\nabla\alpha\rangle+R\Delta\alpha\bigr).
\end{aligned}
\]
The real part gives $\Delta R=RG$. Multiplying the imaginary part by $R$ gives $\diver(R^2\nabla\alpha)=0$. These identities agree on overlaps because any two local lifts differ by a constant multiple of $2\pi$. Similarly,
\[
e^{-i\alpha}\partial_\eta(Re^{i\alpha})
=\partial_\eta R+iR\partial_\eta\alpha=R
\]
gives both boundary identities.
\end{proof}

Note that the coordinate function $h= X\cdot e$ is already known to satisfy $\Delta h=0$, $\partial_\eta h =h|_{\partial\Sigma}$. We orient $\Sigma$ by the unit normal $N$ chosen in \cref{lem:support}, and use the corresponding Hodge star. 

\begin{proposition}[Global stream cylinder]
\label[proposition]{prop:stream-cylinder}
Let $\Sigma$ be as in \cref{thm:main}. 
There are a smooth real-valued function $\beta$ on $\Sigma$
and numbers $b_-<b_+$, such that
\begin{equation}\label{eq:stream-form}
d\beta= R^2\star d\alpha,
\end{equation}
and
\begin{equation}\label{eq:stream-diffeo}
(\beta,\alpha):\Sigma \rightarrow C \coloneqq [b_-,b_+]\times(\R/2\pi\mathbb Z)
\end{equation}
is a diffeomorphism of compact manifolds with boundary. Here $\star$ is the Hodge star for a choice of orientation of $(\Sigma,g)$. 

\end{proposition}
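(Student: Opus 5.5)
The plan is to construct $\beta$ as a primitive of the closed one-form $\omega:=R^2\star d\alpha$, show that $(\beta,\alpha)$ is a local diffeomorphism up to the boundary, and then upgrade this to a global diffeomorphism by a degree and covering argument.

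\emph{Closedness and exactness.} By \cref{lem:weighted-angle}, $d\omega=d(R^2\star d\alpha)=\diver(R^2\nabla\alpha)\dA_g=0$, so $\omega$ is closed. Since $H_1(\Sigma;\mathbb Z)$ is generated by $\Gamma_+$, $\omega$ is exact exactly when $\int_{\Gamma_+}\omega=0$. Along the boundary with unit tangent $T$, we have $\star d\alpha(T)=\pm d\alpha(\eta)=\pm\partial_\eta\alpha=0$ by \eqref{eq:R-alpha-boundary}. Hence $\omega$ restricts to zero on $\partial\Sigma$, the period vanishes, and $\beta$ exists. In addition, $\beta$ is constant on each of $\Gamma_\pm$; call these constants $b_\pm$.

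\emph{Local diffeomorphism.} At each point, $d\alpha\neq0$ by \eqref{eq:angular-properties}, and $d\beta=R^2\star d\alpha$ is the rotation of $d\alpha$ by a right angle, scaled by $R^2>0$. Therefore $d\beta\wedge d\alpha=\pm R^2G\,\dA_g\neq0$, and $\Phi=(\beta,\alpha)$ is a local diffeomorphism into $\R\times\R/2\pi\mathbb Z$, including at the boundary. Moreover $\partial_\eta\beta=\pm R^2|\nabla\alpha|\neq0$ on $\partial\Sigma$, since $\nabla\alpha$ is tangent to the boundary there. So along $\Gamma_+$ the function $\beta$ has nonzero normal derivative of constant sign, by connectedness. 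The same holds on $\Gamma_-$.

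\emph{Global injectivity.} I would first show $b_-\neq b_+$ and that $\beta$ takes values in $[b_-,b_+]$, after relabelling. Since $d\beta\neq0$, $\beta$ has no interior critical points, so its maximum and minimum occur on $\partial\Sigma$. If $\beta$ attained both extrema on the same component, it would be constant, which is absurd. So $\beta(\Sigma)=[b_-,b_+]$ with $b_-<b_+$, and $\Phi$ maps into $C$ with $\partial\Sigma\to\partial C$ and $\Sigma^\circ\to C^\circ$. A local diffeomorphism between compact connected manifolds with boundary that maps boundary to boundary and interior to interior is proper, and hence a finite covering map $\Sigma\to C$. Its degree equals the degree of $\Phi|_{\Gamma_+}\colon\Gamma_+\to\{b_+\}\times\R/2\pi\mathbb Z$, which is $\alpha|_{\Gamma_+}$. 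By \cref{prop:angle} this is a diffeomorphism, so the covering is one-sheeted and $\Phi$ is a diffeomorphism.

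The main obstacle I expect is this covering step: checking properness and that the boundary is preserved, and making sure the two constant values $b_\pm$ really are different. Both rely on $d\beta$ never vanishing, which in turn depends on $d\alpha\neq0$ up to the boundary. The argument therefore hinges on \cref{prop:angle}, and through it on the positive support function and the radial-graph property of \cref{thm:radial}.
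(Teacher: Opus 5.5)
Your proof is correct. Its first part coincides with the paper's: you show that $\omega=R^2\star d\alpha$ is closed, that it restricts to zero on $\partial\Sigma$, and that it is exact because a boundary component generates $H_1$. You then get that $\beta$ is constant on $\Gamma_\pm$, and that $b_-<b_+$ because $\beta$ has no critical points.

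The global step takes a different route. The paper builds the product structure by hand. It flows along $V=\nabla\beta/|\nabla\beta|^2$, which satisfies $d\beta(V)=1$ and $d\alpha(V)=0$, starting from $\Gamma_{\mathrm{lo}}$, after extending $V$ to the double. It checks that each trajectory stays in $\Sigma$ and reaches $\Gamma_{\mathrm{hi}}$ exactly at time $b_+-b_-$. Uniqueness of integral curves then gives a diffeomorphism $[b_-,b_+]\times\Gamma_{\mathrm{lo}}\to\Sigma$, which it composes with $\alpha|_{\Gamma_{\mathrm{lo}}}$.

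You instead use $d\beta\wedge d\alpha=-R^2G\,\mathrm dA_g\neq0$ up to the boundary. This makes $\Phi=(\beta,\alpha)$ a local diffeomorphism, hence a finite covering of the connected cylinder $C$, since $\Sigma$ is compact. You then count sheets over a boundary point. This is shorter: it avoids the extension of $V$ and the ODE continuation, and smoothness of $\Phi^{-1}$ comes directly from the inverse function theorem. The paper's route in exchange gives an explicit formula for $\Phi^{-1}$ and exhibits the $\alpha$-level curves as $\beta$-trajectories.

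A few points should be stated rather than left implicit.
\begin{itemize}
\item Interior points satisfy $b_-<\beta<b_+$. A point of $\Sigma^\circ$ with $\beta=b_\pm$ would be an interior extremum, hence critical.
\item This fact, together with $\Phi(\partial\Sigma)\subset\partial C$, makes $\Phi$ send half-neighbourhoods of boundary points onto half-neighbourhoods. That is what makes $\Phi$ a local homeomorphism of manifolds with boundary, with open image.
\item Properness comes from compactness of $\Sigma$ alone, not from the boundary and interior conditions.
\item The sheet count is cleaner as a fibre count than as a degree. Since $\beta^{-1}(b_+)=\Gamma_+$, the fibre over $(b_+,\alpha_0)$ is $\{p\in\Gamma_+:\alpha(p)=\alpha_0\}$. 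This is a single point because $\alpha|_{\Gamma_+}$ is injective.
\end{itemize}
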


\begin{proof}
Set $\omega:=R^2\star d\alpha$. By \cref{eq:weighted-angle},
\[
d\omega=\diver(R^2\nabla\alpha)\dA_g=0.
\]  
Along each boundary component, choose the unit tangent $T$ so that $(\eta,T)$ is positively oriented, where $\eta$ is the outward unit conormal. Then
\[
\star\eta^\flat=T^\flat, \qquad \star T^\flat=-\eta^\flat,
\]
and consequently
\[
\omega(T)=R^2(\star d\alpha)(T) = R^2d\alpha(\eta)=0
\]
by \cref{eq:R-alpha-boundary}. Thus, for the inclusion
$\iota:\partial\Sigma\hookrightarrow\Sigma$, we have $\iota^*\omega=0.$

Since $\Sigma$ is an annulus, either boundary component generates
$H_1(\Sigma;\mathbb Z)$. Since $\omega$ is closed and its integral around a boundary generator vanishes, $\omega$ has zero period on every closed curve. Therefore $\omega$ is exact, and is induced by a smooth function $\beta$ as in \cref{eq:stream-form}.

(Concretely, fixing $q_0\in\Sigma$, we
may define the primitive
\[
\beta(q):=\int_{q_0}^{q}\omega,
\]
where the integral is independent of the chosen path. This gives a smooth
real-valued function, up to an additive constant, satisfying \cref{eq:stream-form}.
Smoothness up to the boundary follows from the local primitive construction in
boundary coordinate half-discs.) 

Since $d\beta(T)=\omega(T)=0$, the function $\beta$ is constant on each boundary component.
Moreover, the Hodge star is an isometry on one-forms and rotates them orthogonally,
so
\begin{equation}\label{eq:stream-gradients}
\langle\nabla\beta,\nabla\alpha\rangle=0, \qquad |\nabla\beta|^2=R^4|\nabla\alpha|^2 = R^4 G>0
\quad\text{on }\Sigma.
\end{equation}
In particular, $\beta$ is nonconstant and has no interior critical points.
Compactness therefore gives distinct minimum and maximum values, both attained on
the boundary. As $\beta$ is constant on each of the two boundary components, these
constants are precisely its minimum $b_-$ and maximum $b_+$. Thus, $b_-<\beta<b_+$
in $\Sigma^\circ$. Write
\[
\Gamma_{\mathrm{lo}}:=\{\beta=b_-\}, \qquad \Gamma_{\mathrm{hi}}:=\{\beta=b_+\}
\]
for the two boundary components, labelled by their $\beta$-values.

Consider the smooth vector field
\[
V:=\frac{\nabla\beta}{|\nabla\beta|^2}.
\]
By \cref{eq:stream-gradients}, $d\beta(V)=1$ and $d\alpha(V)=0$. On the boundary,
$\nabla\beta$ is conormal because $\beta$ is constant along each component. Extremality of the boundary values for $\beta$ then implies
\[
\partial_\eta\beta<0\quad\text{on }\Gamma_{\mathrm{lo}},
\qquad
\partial_\eta\beta>0\quad\text{on }\Gamma_{\mathrm{hi}}.
\]
That is, $V$ points into $\Sigma$ along $\Gamma_{\mathrm{lo}}$ and out of $\Sigma$
along $\Gamma_{\mathrm{hi}}$.

Extend $V$ smoothly to a neighbourhood of $\Sigma$ in its double and denote its flow
by $\Theta_s$. 
For $q\in\Gamma_{\mathrm{lo}}$, as long as the
trajectory remains in $\Sigma$,
\[
\beta(\Theta_s(q))=b_-+s, \qquad \alpha(\Theta_s(q))=\alpha(q).
\]
Compactness and smooth continuation of the flow imply that this trajectory exists
for $0\leq s\leq b_+-b_-$ and remains in $\Sigma$ throughout that interval: it
enters the interior initially and cannot meet either boundary at a time
$0<s<b_+-b_-$. At time $b_+-b_-$ it reaches $\Gamma_{\mathrm{hi}}$.

Conversely, flowing backwards from any $x\in\Sigma$ reaches $\Gamma_{\mathrm{lo}}$
after time $\beta(x)-b_-$. Uniqueness of integral curves therefore
shows that
\[
\Psi:[b_-,b_+]\times\Gamma_{\mathrm{lo}} \rightarrow\Sigma, \qquad \Psi(s,q):=\Theta_{s-b_-}(q),
\]
is bijective. Both $\Psi$ and its inverse
\[
\Psi^{-1}(x) = \bigl(\beta(x),\Theta_{b_--\beta(x)}(x)\bigr)
\]
are smooth up to the boundary, by smooth dependence of the flow on time and initial data.

Finally, $\alpha$ is constant along the flow trajectories, and
$\alpha|_{\Gamma_{\mathrm{lo}}}$ is a diffeomorphism onto $\R/2\pi\mathbb Z$ by
\cref{prop:angle}. Consequently, $(\beta,\alpha)\circ\Psi(s,q)=(s,\alpha(q))$, which
proves \cref{eq:stream-diffeo}. Replacing $\beta$ by
$-\beta$ reverses $\varepsilon$ and replaces the interval
$[b_-,b_+]$ by $[-b_+,-b_-]$.
\end{proof}

\begin{lemma}[Metric and boundary measures]\label[lemma]{lem:metric}
In the coordinates of \cref{prop:stream-cylinder},
\begin{equation}\label{eq:metric}
 g=\frac{1}{R^4G}\,d\beta^2+\frac{1}{G}\,d\alpha^2,
 \qquad dA=\frac{1}{R^2G}\,d\beta\,d\alpha.
\end{equation}
The area formula in \cref{eq:metric} is an identity of positive
measures, independent of the orientation of the coordinates.
At the two cylinder boundaries,
\begin{equation}\label{eq:boundary-metric}
 \eta\big|_{\beta=b_\pm}
 =\pm R^2\sqrt G\,\partial_\beta,
 \qquad ds=G^{-1/2}\,|d\alpha|.
\end{equation}
In particular, $R$ and $G$ are smooth and bounded above and below by
positive constants on $C$.
\end{lemma}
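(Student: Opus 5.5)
The plan is to compute the metric in the coordinates $(\beta,\alpha)$ from the gradient relations \cref{eq:stream-gradients}. Since $(\beta,\alpha)$ is a diffeomorphism by \cref{prop:stream-cylinder}, $d\beta$ and $d\alpha$ form a coframe at every point of $C$. By \cref{eq:stream-gradients}, the inverse metric in this coframe is
\[
g^{-1}=\begin{pmatrix}|\nabla\beta|^2 & \langle\nabla\beta,\nabla\alpha\rangle\\ \langle\nabla\beta,\nabla\alpha\rangle & |\nabla\alpha|^2\end{pmatrix}=\begin{pmatrix}R^4G&0\\0&G\end{pmatrix}.
\]
Inverting this diagonal matrix gives the stated form of $g$. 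The area density is $\sqrt{\det g}=(R^4G\cdot G)^{-1/2}=R^{-2}G^{-1}$. This is taken as an absolute value, so the area formula is an identity of positive measures regardless of the orientation of the coordinates.

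For the boundary conormal, I use that $\beta$ is constant on each boundary component. Hence $\nabla\beta$ is normal to $\partial\Sigma$, and by the sign information $\partial_\eta\beta<0$ on $\Gamma_{\mathrm{lo}}$ and $\partial_\eta\beta>0$ on $\Gamma_{\mathrm{hi}}$ from the proof of \cref{prop:stream-cylinder}, we have $\eta=\pm\nabla\beta/|\nabla\beta|$ at $\beta=b_\pm$. In coordinates, $\nabla\beta=g^{\beta\beta}\partial_\beta=R^4G\,\partial_\beta$, because the metric is diagonal. Therefore
\[
\eta=\pm\frac{R^4G}{R^2\sqrt G}\,\partial_\beta=\pm R^2\sqrt G\,\partial_\beta .
\]
Along a boundary curve $d\beta=0$, so the induced line element is $ds^2=G^{-1}d\alpha^2$, that is, $ds=G^{-1/2}|d\alpha|$.

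The regularity claims follow from earlier results. $R$ is smooth and strictly positive on the compact surface $\Sigma$ by \cref{prop:angle}. $G=|\nabla\alpha|^2$ is smooth, since $d\alpha$ is the smooth global one-form \cref{eq:dalpha}, and it is strictly positive because $d\alpha\neq0$ on all of $\Sigma$, including the boundary. Since $(\beta,\alpha)$ is a diffeomorphism up to the boundary, both functions pull back to smooth positive functions on the compact cylinder $C$. By compactness they are therefore bounded above and below by positive constants.

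I expect no genuine obstacle; the only care needed is the sign bookkeeping for $\eta$ at the two boundaries, which is settled by the extremality argument in \cref{prop:stream-cylinder}.
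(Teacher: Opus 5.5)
Your proposal is correct and follows the paper's proof: read off the diagonal inverse metric $g^{\beta\beta}=R^4G$, $g^{\alpha\alpha}=G$, $g^{\beta\alpha}=0$ from the stream-gradient identities, invert and take the determinant, normalise $\partial_\beta$ (equivalently $\nabla\beta=R^4G\,\partial_\beta$) to get the conormal with signs fixed by extremality of $b_\pm$, restrict to $d\beta=0$ for arclength, and conclude by smoothness, positivity and compactness. Your version simply writes out the sign bookkeeping and the smoothness and positivity of $G$ more explicitly than the paper does.
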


\begin{proof}
As in \cref{eq:stream-gradients}, the inverse metric satisfies $g^{\beta\beta}=|\nabla\beta|^2=R^4G$,
$g^{\alpha\alpha}=G$, and
$g^{\beta\alpha}=\langle\nabla\beta,\nabla\alpha\rangle=0$. Inverting gives the
metric, and taking its determinant gives the area measure.  Normalising
$\partial_\beta$ gives the conormal; restricting the metric to the boundary gives
its arclength measure. The final assertion follows from smoothness, positivity, and
compactness.
\end{proof}

\begin{remark}
Radial projection is singular along the free boundary, because
$X=\eta\in T\Sigma$ there. The stream coordinates are instead regular:
\cref{eq:stream-gradients} holds up to the boundary.  No boundary
regularity of the inverse radial parametrisation is used below.
\end{remark}

\section{A monotone kernel function}
\label{sec:operator}

In this section, we rewrite the modified energy form $\Q_\Sigma$ in our stream coordinates $(\beta,\alpha)$. 

Concretely, we will work on the cylinder $C=[b_-,b_+]\times(\R/2\pi\mathbb Z)$, equipped with the flat product metric $g_{\mathrm{flat}}=d\beta^2+d\alpha^2$. Later, we will consider the corresponding Sobolev
space $H^1(C)$, with norm
\[
\|u\|_{H^1(C)}^2 = \int_C \left(|u|^2+|u_\beta|^2+|u_\alpha|^2\right)\dbda.
\]
Equivalently, $H^1(C)$ consists of $H^1$ functions on the cut rectangle $[b_-,b_+]\times [0,2\pi]$ whose boundary traces agree at $\alpha=0$ and $\alpha=2\pi$ (no boundary condition is imposed at $\beta=b_\pm$). Recall that $C^\infty(C)$ (understood as smooth up to the boundary) is dense in $H^1(C)$.

By a slight abuse of notation, we will identify functions on $C$ with their pullbacks via the coordinate diffeomorphism $\Phi=(\beta,\alpha):\Sigma\to C$.

\begin{proposition}\label[proposition]{prop:form-identity}
For all $u,v\in H^1(C)$,
\begin{equation}\label{eq:form-identity}
 \Q_\Sigma(Ru,Rv)
 =\int_C\left(R^4u_\beta v_\beta+u_\alpha v_\alpha-uv\right)\dbda.
\end{equation}
\end{proposition}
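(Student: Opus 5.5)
We first prove \cref{eq:form-identity} for $u,v\in C^\infty(C)$ and then extend it to $H^1(C)$ by density. Both sides are continuous on $H^1(C)\times H^1(C)$. On the right this holds because $R$ is bounded by \cref{lem:metric}. On the left, $u\mapsto Ru$ is bounded from $H^1(C)$ to $H^1(\Sigma)$: the coordinate map $\Phi$ is a diffeomorphism of compact manifolds with boundary, $R$ is smooth, and $\Q_\Sigma$ is continuous on $H^1(\Sigma)$ by the trace inequality. By polarization it suffices to treat $u=v$.

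For smooth $u$ we expand
\[
|\nabla(Ru)|^2=R^2|\nabla u|^2+2Ru\langle\nabla R,\nabla u\rangle+u^2|\nabla R|^2 .
\]
We rewrite the last two terms as $\tfrac12\langle\nabla R^2,\nabla u^2\rangle\cdot\tfrac12+\dots$; it is cleaner to use the identity
\[
|\nabla(Ru)|^2=R^2|\nabla u|^2+\diver(u^2R\nabla R)-u^2R\Delta R .
\]
Integrating over $\Sigma$ and applying the divergence theorem gives a boundary term $\int_{\partial\Sigma}u^2R\,\partial_\eta R\ds=\int_{\partial\Sigma}R^2u^2\ds$ by \cref{eq:R-alpha-boundary}. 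This cancels exactly with the boundary term $-\int_{\partial\Sigma}(Ru)^2\ds$ in $\Q_\Sigma$. Using $\Delta R=RG$ from \cref{lem:weighted-angle}, we obtain
\[
\Q_\Sigma(Ru)=\int_\Sigma\bigl(R^2|\nabla u|^2-R^2Gu^2\bigr)\dA .
\]

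Next we convert to stream coordinates with \cref{lem:metric}. The inverse metric has $g^{\beta\beta}=R^4G$, $g^{\alpha\alpha}=G$ and no cross term, so
\[
|\nabla u|^2=R^4Gu_\beta^2+Gu_\alpha^2,\qquad \dA=\frac{1}{R^2G}\dbda .
\]
Therefore
\[
R^2|\nabla u|^2\dA=(R^4u_\beta^2+u_\alpha^2)\dbda,\qquad R^2Gu^2\dA=u^2\dbda .
\]
This yields \cref{eq:form-identity} for smooth $u$, and polarization gives the bilinear version.

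The main point requiring care is the integration by parts. It needs $u$ smooth up to the boundary, which is why we argue by density. It also needs $R$ smooth and positive up to $\partial\Sigma$, which holds by \cref{prop:angle}. With those in place, the only remaining step is the bookkeeping of the exact cancellation of the boundary terms via $\partial_\eta R=R$, and everything else is routine.
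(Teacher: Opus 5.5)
Your proof is correct and follows essentially the same route as the paper: integrate by parts using $\Delta R=RG$ and $\partial_\eta R=R$, so that the boundary terms cancel, then convert to stream coordinates via the metric of \cref{lem:metric} and extend to $H^1(C)$ by density. The only difference is cosmetic: the paper works directly with the bilinear identity $\langle\nabla(Ru),\nabla(Rv)\rangle=R^2\langle\nabla u,\nabla v\rangle+\langle\nabla R,\nabla(Ruv)\rangle$ rather than polarizing. You should also delete the aborted sentence beginning ``We rewrite the last two terms as\ldots''.
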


\begin{proof}
First take $u,v$ smooth.  The product identity
\[
 \langle\nabla(Ru),\nabla(Rv)\rangle
 =R^2\langle\nabla u,\nabla v\rangle
   +\langle\nabla R,\nabla(Ruv)\rangle
\]
gives, after integration by parts,
\begin{align*}
 \Q_\Sigma(Ru,Rv)
 &=\int_\Sigma R^2\langle\nabla u,\nabla v\rangle\dA
   -\int_\Sigma Ruv\Delta R\dA\\
 &\quad+\int_{\partial\Sigma}Ruv\,\partial_\eta R\ds
       -\int_{\partial\Sigma}R^2uv\ds\\
 &=\int_\Sigma R^2
     \left(\langle\nabla u,\nabla v\rangle-Guv\right)\dA.
\end{align*}
Here we used \cref{eq:weighted-angle,eq:R-alpha-boundary}.
Substitution of the metric in \cref{eq:metric} gives
\cref{eq:form-identity} for smooth functions.

The identity extends to $H^1(C)$ by density and continuity, noting that $R$ and $1/R$ are smooth functions on the compact cylinder $C$. 

\end{proof}

Set
\begin{equation}\label{eq:a}
 a=R^4>0, \qquad \mathfrak q_a(u,v) =\int_C(a u_\beta v_\beta+u_\alpha v_\alpha-uv)\dbda,
\end{equation}
so that $\mathfrak{q}_a$ is the modified energy form in stream coordinates on the right-hand side of \cref{eq:form-identity}.

We will consider the self-adjoint operator associated with this form in $L^2(C,d\beta\,d\alpha)$. Thus $u\in D(\Aop)$ and $\Aop u=F$ mean that $u\in H^1(C)$, $F\in L^2(C)$, and
\[
\mathfrak q_a(u,v)=\int_C Fv\dbda \qquad\text{for all }v\in H^1(C).
\]
Integration by parts gives the differential expression
\begin{equation}\label{eq:A}
 \Aop u=-\partial_\beta(a\,\partial_\beta u) -\partial_{\alpha\alpha}u-u,
\end{equation}
with periodic conditions in $\alpha$ and Neumann conditions at $b_\pm$. Its domain is
\begin{equation}\label{eq:domain-A}
 D(\Aop)=\{u\in H^2(C):u_\beta=0\text{ on }\beta=b_\pm\}.
\end{equation}
Indeed, the boundary term is $\pm a u_\beta v$; unrestricted boundary traces of $v$ give $a u_\beta=0$, equivalently $u_\beta=0$. Smooth elliptic Neumann regularity gives the $H^2$ domain in \cref{eq:domain-A}, with the boundary conditions understood in the trace sense. Since $a$ has a positive lower bound, $\mathfrak q_a+2\|\cdot\|_{L^2}^2$ is coercive on $H^1(C)$. Compactness of $H^1(C)\hookrightarrow L^2(C)$ gives compact resolvent, and elliptic regularity makes all eigenfunctions smooth up to the boundary.

The operator $\mathcal{A}$ essentially corresponds to the Robin Laplacian in stream coordinates (and with the reweighting by $R$). Explicitly, we have:

\begin{lemma}\label[lemma]{lem:harmonic-quotient}
For a smooth function $u$ on $C$, we have
\begin{equation}\label{eq:exact-Laplacian-transform}
\Delta(Ru)=-RG\,\Aop u.
\end{equation}
In particular, 
\begin{equation}\label{eq:harmonic-transform}
 \Delta(Ru)=0\quad\Longleftrightarrow\quad \Aop u=0.
\end{equation}

On the boundary,
\begin{equation}\label{eq:Robin-transform}
 \partial_\eta(Ru)=\sigma Ru
 \quad\Longleftrightarrow\quad
 \partial_\eta u=(\sigma-1)u.
\end{equation}
In particular, the Steklov condition of eigenvalue $1$ is equivalent
to $u_\beta=0$ on $\partial C$.
\end{lemma}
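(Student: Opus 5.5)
The plan is to compute everything directly in the stream coordinates, using the metric formula \cref{eq:metric} together with the identities of \cref{lem:weighted-angle}. For the interior identity, expand
\[
\Delta(Ru)=u\,\Delta R+2\langle\nabla R,\nabla u\rangle+R\,\Delta u .
\]
By \cref{eq:weighted-angle}, the first term is $RGu$.

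For the remaining two terms, I would avoid expanding $\Delta u$ in coordinates separately. Instead, I would use the conformal-type structure of the metric. With $\sqrt{\det g}=1/(R^2G)$, $g^{\beta\beta}=R^4G$ and $g^{\alpha\alpha}=G$, the Laplacian in coordinates $(\beta,\alpha)$ is
\[
\Delta u=R^2G\left[\partial_\beta\!\left(R^2\,\partial_\beta u\right)+\partial_\alpha\!\left(R^{-2}\,\partial_\alpha u\right)\right].
\]
The cleanest route is then to use $\diver(R^2\nabla\cdot)$ rather than $\Delta$ directly:
\[
R\,\Delta(Ru)=\diver\bigl(R^2\nabla u\bigr)+Ru\,\Delta R .
\]
This identity is just the weighted product rule:
\[
\diver(R^2\nabla u)=R^2\Delta u+2R\langle\nabla R,\nabla u\rangle .
\]
In coordinates,
\[
\diver(R^2\nabla u)=R^2G\left[\partial_\beta(R^4\partial_\beta u)+\partial_{\alpha\alpha}u\right],
\]
because the weight $R^2$ cancels the $R^{\pm2}$ factors. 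Combining this with $Ru\,\Delta R=R^2Gu$ and dividing by $R$ gives
\[
\Delta(Ru)=RG\left[\partial_\beta(a\,u_\beta)+u_{\alpha\alpha}+u\right]=-RG\,\Aop u .
\]
Since $R>0$ and $G>0$, the equivalence \cref{eq:harmonic-transform} follows immediately.

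For the boundary statement, I would use $\partial_\eta(Ru)=u\,\partial_\eta R+R\,\partial_\eta u=R\bigl(u+\partial_\eta u\bigr)$, where the last step is \cref{eq:R-alpha-boundary}. Dividing by $R>0$ yields \cref{eq:Robin-transform}. When $\sigma=1$, the condition reduces to $\partial_\eta u=0$. By \cref{eq:boundary-metric}, $\eta=\pm R^2\sqrt G\,\partial_\beta$ with a nonvanishing coefficient, so this is equivalent to $u_\beta=0$ on $\partial C$.

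The only delicate point is bookkeeping the coordinate Laplacian: the factors $R^{\pm2}$ from $g^{ij}\sqrt{\det g}$ must cancel correctly. I would double-check this by verifying it on the function $u=1$, where it should recover $\Delta R=RG$.
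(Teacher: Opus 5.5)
Your proposal is correct and is essentially the paper's own proof. Both use the identity $R\Delta(Ru)=\diver(R^2\nabla u)+R^2Gu$. Both then evaluate the coordinate divergence with $J=(R^2G)^{-1}$ to get $\diver(R^2\nabla u)=R^2G\bigl(\partial_\beta(R^4u_\beta)+u_{\alpha\alpha}\bigr)$. At the boundary, both use $\partial_\eta(Ru)=R(u+\partial_\eta u)$ together with $\eta=\pm R^2\sqrt G\,\partial_\beta$.

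One small point: your sanity check with $u=1$ only gives back the input $\Delta R=RG$, so it does not test the $R^{\pm2}$ bookkeeping. Taking $u=\cos\alpha$ is more informative, since $R\cos\alpha$ is a harmonic coordinate function and $\Aop\cos\alpha=0$.
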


\begin{proof}
Using $\Delta R=RG$,
\[
R\Delta(Ru)=\diver(R^2\nabla u)+R^2Gu.
\]
All gradients and divergences here use the induced metric, whereas
subscripts denote coordinate derivatives. With
$J=\sqrt{\det g}=(R^2G)^{-1}$, its divergence formula gives
\[
\begin{aligned}
\diver(R^2\nabla u) =J^{-1}\left[\partial_\beta(JR^2g^{\beta\beta}u_\beta)
  +\partial_\alpha(JR^2g^{\alpha\alpha}u_\alpha)\right] =R^2G\left(\partial_\beta(R^4u_\beta)+u_{\alpha\alpha}\right).
\end{aligned}
\]
Substituting the above implies \cref{eq:exact-Laplacian-transform}.

The boundary equivalence \eqref{eq:Robin-transform} follows from
$\partial_\eta(Ru)=Ru+R\partial_\eta u$.
Finally, \cref{eq:boundary-metric} identifies homogeneous Neumann
data with $u_\beta=0$ at the two ends.
\end{proof}

As the height $h$ was a 0-Robin eigenfunction for $\mathcal{Q}_\Sigma$, by the transformations above one should expect $t=h/R$ to give a 0-eigenfunction for $\mathfrak{q}_a$. Crucially, we observe that the derivative $t_\beta$ in stream coordinates is proportional to the support function $\xi=X\cdot N$ and hence does not change sign:

\begin{proposition} \label[proposition]{prop:monotone}
The smooth function
\begin{equation}\label{eq:t}
 t=\frac{h}{R}
\end{equation}
satisfies
\begin{equation}\label{eq:t-equation}
 \Aop t=0\quad\text{in }C,
 \qquad t_\beta=0\quad\text{on }\partial C,
 \qquad t_\beta>0\quad\text{in }C^\circ.
\end{equation}
More precisely,
\begin{equation}\label{eq:q-support}
t_\beta=\frac{\xi}{R^5G}.
\end{equation}
\end{proposition}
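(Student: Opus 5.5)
By \cref{lem:harmonic-quotient}, the first two assertions of \cref{eq:t-equation} are immediate. Since $Rt=h$ is a coordinate function, \cref{lem:coordinate-identities} gives $\Delta(Rt)=0$ and $\partial_\eta(Rt)=Rt$ on $\partial\Sigma$. Then \cref{eq:harmonic-transform} gives $\Aop t=0$, and \cref{eq:Robin-transform} with $\sigma=1$ gives $\partial_\eta t=0$. By \cref{eq:boundary-metric} this is $t_\beta=0$ at $\beta=b_\pm$. Smoothness of $t$ follows from $R>0$ (\cref{prop:angle}). The strict positivity in $C^\circ$ is then a consequence of \cref{eq:q-support} and \cref{lem:support}, since $R,G>0$. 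So the real content is the identity \cref{eq:q-support}.

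To compute $t_\beta$, I would express $\partial_\beta$ as an ambient vector. In stream coordinates, $\partial_\beta$ is the field $V=\nabla\beta/|\nabla\beta|^2$ from the proof of \cref{prop:stream-cylinder}, since $d\beta(V)=1$ and $d\alpha(V)=0$. By \cref{eq:stream-form,eq:stream-gradients}, $\nabla\beta=R^2\,\mathcal J\nabla\alpha$, where $\mathcal J$ is rotation by $\pi/2$ in $T\Sigma$. With the orientation given by $N$, I expect $\mathcal J w=N\times w$ for tangent $w$, so that
\[
V=\frac{N\times\nabla\alpha}{R^2G}.
\]
From the proof of \cref{prop:angle}, $d\alpha(v)=\langle e_\theta,dX(v)\rangle/R$, so $\nabla\alpha=(e_\theta-(e_\theta\cdot N)N)/R$ and hence $N\times\nabla\alpha=N\times e_\theta/R$.

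Next, write $dh=e\cdot dX$ and $dR=e_r\cdot dX$, so that $R^2\,dt=(Re-he_r)\cdot dX$. The key algebraic observation is the identity
\[
Re-he_r=X\times e_\theta,
\]
which follows from $X=Re_r+he$ and $e_r\times e_\theta=e$, $e\times e_\theta=-e_r$. Applying the Lagrange identity gives
\[
\langle X\times e_\theta,N\times e_\theta\rangle=(X\cdot N)|e_\theta|^2-(X\cdot e_\theta)(e_\theta\cdot N)=\xi,
\]
because $X\cdot e_\theta=0$. Assembling the factors,
\[
t_\beta=\frac{1}{R^2}\cdot\frac{\xi}{R\cdot R^2G}=\frac{\xi}{R^5G}.
\]

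The main obstacle is bookkeeping of orientations and signs. One must check that the Hodge star convention $\zeta\wedge\star\vartheta=\langle\zeta,\vartheta\rangle\dA_g$, with orientation induced by $N$, makes $\mathcal J$ equal to $N\times\cdot$ rather than $-N\times\cdot$. The result must also be consistent with the labelling $b_-<b_+$ and the sign freedom $\beta\mapsto-\beta$ noted at the end of \cref{prop:stream-cylinder}. I would fix this by a direct check on $\Ccc$, where $\xi>0$ and $t$ visibly increases across the annulus. If necessary, I would replace $\beta$ by $-\beta$, which the construction permits, so that \cref{eq:q-support} holds with the stated sign.
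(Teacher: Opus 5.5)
Your proposal is correct and is essentially the paper's argument: the equation and Neumann data come from \cref{lem:harmonic-quotient}, and \cref{eq:q-support} comes from an ambient cross-product identity. Where the paper uses the triple product $X\cdot(X_\beta\times X_\alpha)=-R^3t_\beta$ together with $X_\beta\times X_\alpha=-\sqrt{\det g}\,N$, you write $\partial_\beta=N\times e_\theta/(R^3G)$ and apply the Lagrange identity. The sign you flagged comes out as you expected: with $(E_1,E_2,N)$ positively oriented, the stated Hodge convention gives $\star E_1^\flat=E_2^\flat=(N\times E_1)^\flat$, so $\mathcal J=N\times\cdot$ and no replacement $\beta\mapsto-\beta$ is needed (nor would one be allowed, since \cref{eq:stream-form} fixes $\beta$ up to a constant).
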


\begin{proof}
Recall that the coordinate function $h=X\cdot e$ satisfies $\Delta h=0$ and $\partial_\eta h=h$. Applying \cref{lem:harmonic-quotient} to $h=Rt$ gives the interior equation and the boundary condition in \cref{eq:t-equation}. 

Recall that $X=R e_r+he$ in the cylindrical parametrisation \cref{eq:cylindrical}, and that $e_\theta = e\times e_r = \partial_\alpha e_r$. It follows that the derivatives are
\[
 X_\beta=R_\beta e_r+h_\beta e, \qquad X_\alpha=R_\alpha e_r+R e_\theta+h_\alpha e.
\]
In the positively oriented orthonormal frame $(e_r,e_\theta,e)$,
\begin{align} \label{eq:triple-product}
 X\cdot(X_\beta\times X_\alpha) =R(hR_\beta-Rh_\beta) = -R^3t_\beta.
\end{align}
Since the orientation of $\Sigma$ is induced by the normal $N$ from \cref{lem:support}, we obtain $d\beta\wedge d\alpha =R^2\star d\alpha\wedge d\alpha =-R^2G\dA_g.$ Thus, the coordinate frame $(\partial_\beta,\partial_\alpha)$ is negatively oriented relative to $N$, and hence $X_\beta\times X_\alpha=-\sqrt{\det g}\,N.$ Taking the scalar product with $X$ and using \cref{eq:triple-product,eq:metric}, we obtain
\[
-R^3t_\beta =-\sqrt{\det g}\,\xi =-\frac{\xi}{R^2G}.
\]
Therefore,
\[
t_\beta=\frac{\xi}{R^5G},
\]
which proves \cref{eq:q-support}. Since $R,G>0$ on the closed cylinder, this expression is smooth up to the boundary. The positivity of $\xi$ in the interior and its vanishing on the boundary give the remaining assertions in \cref{eq:t-equation}.
\end{proof}

\section{A monotone-kernel spectral theorem on a cylinder}
\label{sec:spectral}

In this section, we will characterise the index and kernel of operators $\mathcal{A}$ that admit a monotone element of their kernel, as in \cref{prop:monotone}. In fact, we will formulate this as a somewhat general principle for operators on the cylinder $C=[b_-,b_+]\times(\R/2\pi\mathbb Z)$, which take to be equipped with the flat metric. In this section, we will fix $a$ to be an arbitrary smooth positive function on $C$ (that may depend on both coordinates). The main result of this section is \Cref{thm:monotone-criterion}.

Motivated by \cref{eq:A}, we define the operator $\Aop$ to be the Neumann realisation of 
\[
\Aop u=-\partial_\beta(a\,\partial_\beta u) -\partial_{\alpha\alpha}u-u.
\]
Define the Dirichlet operator
\begin{equation}\label{eq:B}
 \Bop w=-\partial_{\beta\beta}(aw)-w_{\alpha\alpha}-w,
 \qquad w\big|_{\partial C}=0.
\end{equation}
Note that its nondivergence form is
\begin{equation}\label{eq:B-expanded}
 \Bop w=-a w_{\beta\beta}-w_{\alpha\alpha}
        -2a_\beta w_\beta-(a_{\beta\beta}+1)w.
\end{equation}
This operator is uniformly elliptic, but need not be self-adjoint on $L^2(C)$. We write $n=\pm\partial_\beta$ for the outward unit normal on $C$. We use the strong maximum principle and Hopf's boundary point lemma in their local smooth-boundary forms \cite[Chapter~3]{GT}.

The operator $\mathcal{B}$ is essentially defined to be the operator which governs derivatives of the kernel of $\mathcal{A}$. Explicitly:

\begin{lemma}\label[lemma]{lem:intertwining}
For every smooth $u$,
\begin{equation}\label{eq:intertwining}
 \partial_\beta(\Aop u)=\Bop(u_\beta).
\end{equation}
If $u$ satisfies Neumann conditions at $b_\pm$, then $u_\beta$
satisfies Dirichlet conditions there.
\end{lemma}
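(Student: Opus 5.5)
The identity \cref{eq:intertwining} is a direct differentiation, so I will verify it by expanding both sides. Set $w:=u_\beta$ and differentiate the differential expression \cref{eq:A} in $\beta$. Since $\partial_\beta$ commutes with $\partial_{\alpha\alpha}$ and with multiplication by constants, we get
\[
\partial_\beta(\Aop u)
=-\partial_{\beta\beta}(a\,u_\beta)-\partial_{\alpha\alpha}(u_\beta)-u_\beta
=-\partial_{\beta\beta}(a w)-w_{\alpha\alpha}-w,
\]
which is exactly the divergence-type expression \cref{eq:B} for $\Bop w$. No commutator term arises because $a$ is differentiated in $\beta$ only. This holds even though $a$ may depend on $\alpha$: in $\Aop$ the coefficient $a$ sits only inside the $\beta$-derivative, and $\partial_{\alpha\alpha}$ carries no coefficient. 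As a consistency check, expanding $\partial_{\beta\beta}(aw)=a w_{\beta\beta}+2a_\beta w_\beta+a_{\beta\beta}w$ recovers the nondivergence form \cref{eq:B-expanded}.

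The identity needs $u$ to be three times differentiable in $\beta$ (twice in $\alpha$ after one $\beta$-derivative). For smooth $u$ on $C$ all of these derivatives are classical and continuous up to the boundary, so the identity holds pointwise on all of $C$, including at $\beta=b_\pm$. Periodicity in $\alpha$ is preserved by $\partial_\beta$, so $w$ is again a smooth function on the cylinder.

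For the boundary statement, note that the Neumann condition in the domain \cref{eq:domain-A} reads $u_\beta=0$ on $\beta=b_\pm$, as shown there using the positive lower bound on $a$. This is literally the statement $w|_{\partial C}=0$, which is the Dirichlet condition appearing in \cref{eq:B}. There is no real obstacle in this lemma. The only point needing care is to use the Neumann condition in the reduced form $u_\beta=0$ rather than $a u_\beta=0$, and these are equivalent because $a>0$.
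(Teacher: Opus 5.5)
Your proposal is correct and follows the paper's proof: differentiate $\Aop u$ in $\beta$, commute $\partial_\beta$ past $\partial_{\alpha\alpha}$ and the constant term, keep all derivatives of $a$ inside $\partial_{\beta\beta}(a u_\beta)$ to obtain $\Bop(u_\beta)$, and read the Neumann condition $u_\beta=0$ at $\beta=b_\pm$ as the Dirichlet condition for $u_\beta$. Your extra remarks on the $\alpha$-dependence of $a$ and on the equivalence of $a u_\beta=0$ with $u_\beta=0$ are consistent with the paper.
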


\begin{proof}
Commuting the coordinate derivatives gives
\begin{align*}
 \partial_\beta(\Aop u) =-\partial_{\beta\beta}(a u_\beta)
   -\partial_{\alpha\alpha}(u_\beta)-u_\beta = \Bop(u_\beta).
\end{align*}
All derivatives of $a$ are retained inside
$\partial_{\beta\beta}(a u_\beta)$.
The boundary assertion is exactly $u_\beta|_{\partial C}=0$.
\end{proof}

We proceed by characterising the kernel of $\mathcal{B}$ when it admits a positive element. The proof follows a comparison principle against the positive solution:

\begin{lemma}\label[lemma]{lem:positive-Dirichlet}
Suppose $q\in C^\infty(C)$ satisfies
\begin{equation}\label{eq:positive-Dirichlet}
 \Bop q=0,\qquad q>0\text{ in }C^\circ,\qquad q=0\text{ on }\partial C.
\end{equation}
Then the following statements hold.
\begin{enumerate}
\item The outward flat-cylinder normal derivative satisfies
$\partial_nq<0$ at every boundary point.
\item There is no nonzero smooth real solution of
$\Bop w=\lambda w$, $w|_{\partial C}=0$, with $\lambda<0$.
\item The real Dirichlet kernel is $\ker\Bop=\Span\{q\}$.
\end{enumerate}
\end{lemma}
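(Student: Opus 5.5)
Throughout, I would work with the nondivergence form \cref{eq:B-expanded}, which is uniformly elliptic with smooth coefficients on the compact cylinder $C$. The zeroth-order coefficient $-(a_{\beta\beta}+1)$ has no sign, so the maximum principle cannot be applied to $\Bop$ directly. The plan is to divide by the positive solution $q$: writing $w=qv$ in $C^\circ$ turns the equations into ones with no zeroth-order term. Boundary behaviour will then be controlled by Hopf's lemma.

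For (1), choose a constant $M\geq0$ with $-(a_{\beta\beta}+1)-M\leq0$. Then
\[
L:=-a\partial_{\beta\beta}-\partial_{\alpha\alpha}-2a_\beta\partial_\beta-(a_{\beta\beta}+1)-M
\]
satisfies $Lq=-Mq\leq0$, i.e.\ $q$ is a supersolution in the sign convention of \cite{GT}. Since $q>0$ inside and $q=0$ on $\partial C$, Hopf's boundary point lemma applies. The boundary is flat, so the interior ball condition is trivial. Hence $\partial_n q<0$ at every boundary point.

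For (2) and (3), let $w$ be smooth with $w|_{\partial C}=0$ and $\Bop w=\lambda w$, $\lambda\leq0$. By (1) and smoothness, $v:=w/q$ extends smoothly, or at least continuously, to $\overline{C}$: near the boundary both $w$ and $q$ vanish to first order and $q_\beta\neq0$, so $v=\int_0^1 w_\beta\,/\int_0^1 q_\beta$ along normal segments. Substituting $w=qv$, the equation becomes
\[
-a q\, v_{\beta\beta}-q\,v_{\alpha\alpha}-2(aq)_\beta v_\beta-2q_\alpha v_\alpha=\lambda q v,
\]
because $\Bop q=0$ absorbs every term with $v$ undifferentiated. Dividing by $q>0$ in $C^\circ$ gives an elliptic equation $Pv=\lambda v$ with no zeroth-order term on the left.

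If $\lambda<0$ and $v$ attains a positive maximum, then at an interior maximum $Pv\geq0$ while $\lambda v<0$, a contradiction. Replacing $w$ by $-w$ handles a negative minimum, so $w\equiv0$ provided the extrema of $v$ are interior. The same argument with $\lambda=0$ and the strong maximum principle forces $v$ to be constant, i.e.\ $w\in\Span\{q\}$, again provided the extremum is not attained only on $\partial C$.

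The main obstacle is exactly this boundary case, since the coefficients of $P$ degenerate there: the $v_\beta$ coefficient $2(aq)_\beta/q$ blows up like $1/\mathrm{dist}$. I would avoid it by working instead with $w-\varepsilon q$ and a sliding argument. Set $\varepsilon^*=\sup\{\varepsilon: w-\varepsilon q\leq0\}$ after normalising so that $w$ has a positive value somewhere, or use $\inf\{\varepsilon:\varepsilon q\geq w\}$, which is finite by (1) and the boundary Taylor expansion. Then $z=\varepsilon^* q-w\geq0$ satisfies $\Bop z=-\lambda w$ in case (2), or $\Bop z=0$ in case (3).

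Case (3) is the clean one. Here $z\geq0$ solves $(\Bop+M)z=Mz\geq0$, so the strong maximum principle gives $z\equiv0$ or $z>0$ inside. If $z>0$, Hopf gives $\partial_n z<0$ everywhere on $\partial C$, and then $z\geq\delta q$ for some $\delta>0$, contradicting minimality of $\varepsilon^*$. Hence $z\equiv0$ and $w=\varepsilon^* q$.

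For case (2), I would first reduce to the sign-definite situation by a Krein--Rutman-type observation. $\Bop$ has a principal Dirichlet eigenvalue with a positive eigenfunction, and the formal adjoint $\Bop^*$ also has a positive principal eigenfunction $\phi^*$. Pairing $\Bop q=0$ against $\phi^*$ shows the principal eigenvalue is $0$. Now suppose $\Bop w=\lambda w$ with $\lambda<0$. The principal eigenvalue has the least real part, which contradicts $\lambda<0$; equivalently, pair with $\phi^*$ and apply the ratio argument above to $w/q$ at an interior extremum, which exists because $v$ extends continuously and the boundary values of $v$ are dominated, by Hopf, by the interior.
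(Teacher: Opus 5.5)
Your treatment of (1) and (3) follows the paper. You shift by a constant so that Hopf applies, extend $w/q$ smoothly to $\partial C$ with boundary value $\partial_n w/\partial_n q$, and rule out $z=mq-w\not\equiv0$ by the strong maximum principle plus Hopf. Your ``$z\ge\delta q$ contradicts minimality of $\varepsilon^*$'' is equivalent to the paper's ``$\partial_n z(p)=0$ at a boundary maximiser of $w/q$''.

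There is a sign slip in (1). The operator you wrote is $\Bop-M$, and $Lq=-Mq\le0$ points the wrong way for Hopf at a boundary minimum of $q$: rewritten with positive leading part, it says $q$ is a subsolution. You want $\Bop+M_0$ with $M_0\ge\sup_C(a_{\beta\beta}+1)$, so that $(\Bop+M_0)q=M_0q\ge0$ with nonnegative zeroth-order coefficient. This is what you implicitly use in (3).

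For (2) you genuinely diverge. Your Krein--Rutman route works: the principal Dirichlet eigenvalue is real, has least real part and a positive eigenfunction, and you identify it as $0$ by pairing $\Bop q=0$ against the positive principal eigenfunction of $\Bop^*=-a\partial_{\beta\beta}-\partial_{\alpha\alpha}-1$. The boundary terms vanish since both functions vanish on $\partial C$. This is valid, but it imports non-self-adjoint spectral theory on the cylinder.

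The paper needs none of that. After arranging $w>0$ somewhere, set $m=\max_C w/q>0$ and $z=mq-w\ge0$. Then
\[
(\Bop-\lambda)z=-\lambda m q>0\quad\text{in }C^\circ,
\]
which is the sign-definite form of your $\Bop z=-\lambda w$. Your case-(3) argument now runs verbatim with $\Bop-\lambda+M$:
\begin{itemize}
\item An interior zero of $z$ is excluded by the second-derivative test. The first- and zeroth-order terms vanish there, leaving $-az_{\beta\beta}-z_{\alpha\alpha}\le0$.
\item A boundary maximiser $p$ of $w/q$ gives $z(p)=\partial_n z(p)=0$, contradicting Hopf.
\end{itemize}

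What your route buys is the conceptual statement that $0$ is the principal eigenvalue of $\Bop$. The paper's route keeps the lemma entirely within the maximum principle and Hopf's lemma.

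Your closing ``equivalently'' alternative is not an argument as written. Nothing makes the boundary values of $w/q$ a priori dominated by interior ones; that is exactly what the Hopf step applied to $z$ proves.
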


\begin{proof}
Choose $M_0\geq0$ such that
$M_0\geq\sup_C(a_{\beta\beta}+1)$.
Then $\Bop+M_0$ has nonnegative zeroth-order coefficient, and
\[
 (\Bop+M_0)q=M_0q\geq0.
\]
Apply Hopf's lemma to $-(\Bop+M_0)$, whose leading part is
positive and whose zeroth-order coefficient is nonpositive.
Since $q>0$ in the interior and vanishes on the boundary, it gives
$\partial_nq<0$.

In a boundary collar, take $\rho=\beta-b_-$ at the lower end or
$\rho=b_+-\beta$ at the upper end. Thus $\partial_n=-\partial_\rho$.
The fundamental theorem of calculus gives
\[
q(\rho,\alpha)=\rho q_1(\rho,\alpha),\qquad
q_1(\rho,\alpha)=\int_0^1
   (\partial_\rho q)(s\rho,\alpha)\,ds.
\]
After restricting to a smaller closed collar, $q_1$ is smooth and
$q_1(0,\alpha)=-\partial_nq(0,\alpha)>0$; it is also positive for
$\rho>0$ because $q>0$. The same construction gives
$w=\rho w_1$ for any smooth Dirichlet function $w$.
It follows that $w/q=w_1/q_1$ extends smoothly to the boundary,
and hence to all of $C$, with boundary value
\begin{equation}\label{eq:boundary-ratio}
 \left.\frac wq\right|_{\partial C}
 =\frac{\partial_nw}{\partial_nq}.
\end{equation}

Suppose next that $\Bop w=\lambda w$ with $\lambda<0$ and $w\not\equiv0$.  Replacing
$w$ by $-w$ if necessary, we can arrange $w$ to be positive somewhere in the
interior.  Set
\[
 m=\max_C\frac wq>0,
 \qquad z=mq-w\geq0.
\]
The maximum exists by \cref{eq:boundary-ratio}, and
\begin{equation}\label{eq:z-negative}
 (\Bop-\lambda)z=-\lambda m q>0\quad\text{in }C^\circ.
\end{equation}
An interior maximiser $p$ of $w/q$ would satisfy
$z(p)=0$, $Dz(p)=0$, and $D^2z(p)\geq0$. Evaluating
\cref{eq:B-expanded}, the first- and zeroth-order terms vanish:
\[
((\Bop-\lambda)z)(p)
=-a(p)z_{\beta\beta}(p)-z_{\alpha\alpha}(p)\leq0.
\]
This contradicts \cref{eq:z-negative}. The same observation
excludes every interior zero of $z$, so $z>0$ in $C^\circ$.
The maximum of $w/q$ is therefore attained at a boundary point $p$.
By \cref{eq:boundary-ratio},
\begin{equation}\label{eq:z-normal-zero}
 z(p)=0,
 \qquad \partial_nz(p)=m\partial_nq(p)-\partial_nw(p)=0.
\end{equation}
Choose $M\geq0$ with
$M\geq\sup_C(a_{\beta\beta}+1+\lambda)$.
The operator $\Bop-\lambda+M$ has nonnegative zeroth-order
coefficient, and
\[
 (\Bop-\lambda+M)z=-\lambda mq+Mz>0\quad\text{in }C^\circ.
\]
Since $z>0$ in the interior and $z(p)=0$, Hopf's lemma gives
$\partial_nz(p)<0$, contradicting \cref{eq:z-normal-zero}.
This proves the second assertion.

Finally, let $\Bop w=0$ with Dirichlet boundary data and set $m=\max_C(w/q)$,
$z=mq-w\geq0$. Then $\Bop z=0$ and $(\Bop+M_0)z=M_0z\geq0$. If $z\not\equiv0$, the
strong maximum principle gives $z>0$ in the interior, while the boundary point lemma
gives $\partial_nz<0$ at all boundary points. An interior maximiser of $w/q$ would
give an interior zero of $z$. A boundary maximiser would give $\partial_nz=0$ by
\cref{eq:boundary-ratio}. Both are impossible. Thus $z=0$ and $w=mq$.
\end{proof}

We can now characterise the nonpositive eigenspaces of $\mathcal{A}$ when it admits a monotone kernel element:

\begin{theorem}\label[theorem]{thm:monotone-criterion}
Let $a\in C^\infty(C)$ be strictly positive.  Suppose there is a smooth function $t$ satisfying
\begin{equation}\label{eq:abstract-t}
 \Aop t=0, \qquad t_\beta=0\text{ on }\partial C, \qquad t_\beta>0\text{ in }C^\circ.
\end{equation}
Then the operator $\Aop$ has exactly one negative eigenvalue, namely $-1$, with
eigenspace consisting of the constants.  Moreover,
\begin{equation}\label{eq:A-kernel}
 \ker\Aop=\Span\{t,\cos\alpha,\sin\alpha\}.
\end{equation}
Consequently, $\ind\mathfrak q_a=1$ and $\dim\ker\Aop=3$.
\end{theorem}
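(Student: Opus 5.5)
The plan is to pass from $\Aop$ to $\Bop$ by differentiating eigenfunctions in $\beta$, and to use \cref{lem:positive-Dirichlet} with $q=t_\beta$.

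First, $q:=t_\beta$ satisfies the hypotheses of \cref{lem:positive-Dirichlet}. By \cref{lem:intertwining}, $\Bop q=\partial_\beta(\Aop t)=0$. Also $q=0$ on $\partial C$ and $q>0$ in $C^\circ$. Hence $\Bop$ has no Dirichlet eigenfunction with negative eigenvalue, and its Dirichlet kernel is $\Span\{q\}$.

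Next, let $u$ be a smooth eigenfunction of $\Aop$ with $\Aop u=\lambda u$ and $\lambda\le0$. The eigenfunctions are smooth up to the boundary by the regularity remarks after \cref{eq:domain-A}. Set $w:=u_\beta$. Since $u$ satisfies Neumann conditions, $w$ vanishes on $\partial C$, and \cref{lem:intertwining} gives
\[
\Bop w=\partial_\beta(\lambda u)=\lambda w .
\]
There are two cases.
\begin{itemize}
\item If $\lambda<0$, part (2) of \cref{lem:positive-Dirichlet} forces $w\equiv0$, so $u=u(\alpha)$. The equation then reduces to $-u''-u=\lambda u$ on the circle, so $\lambda=n^2-1$ for some integer $n$. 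The only negative value is $\lambda=-1$, with $n=0$, and then $u$ is constant. Conversely, the constants satisfy the Neumann condition and $\Aop 1=-1$. So the negative spectrum is exactly $\{-1\}$, with eigenspace the constants, and $\ind\mathfrak q_a=1$ by the min-max characterisation.
\item If $\lambda=0$, part (3) gives $u_\beta=cq=c\,t_\beta$ for some constant $c$. Then $v:=u-ct$ satisfies $v_\beta\equiv0$ and $\Aop v=0$. So $v=v(\alpha)$ solves $v''+v=0$, which gives $v\in\Span\{\cos\alpha,\sin\alpha\}$. Conversely, $\cos\alpha$ and $\sin\alpha$ satisfy the Neumann condition and lie in $\ker\Aop$, and $t$ does too. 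This proves \cref{eq:A-kernel}.
\end{itemize}

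Finally, $\dim\ker\Aop=3$ follows once $t$, $\cos\alpha$ and $\sin\alpha$ are linearly independent. This holds because $t_\beta\not\equiv0$, while any combination of $\cos\alpha$ and $\sin\alpha$ is independent of $\beta$ and nonzero unless trivial.

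The main point needing care is regularity. The argument has to apply \cref{lem:positive-Dirichlet} to $w=u_\beta$, and that lemma is stated for smooth $w$. This should be unproblematic: $a$ is smooth, so Neumann elliptic regularity makes every eigenfunction in $C^\infty(C)$. Everything else is the reduction to the ODE on the circle.
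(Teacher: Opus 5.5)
Your proposal is correct and follows essentially the same route as the paper's proof. Like the paper, you set $q=t_\beta$ and apply the comparison lemma to $u_\beta$ for negative and zero eigenvalues, which reduces to the circle operator with eigenvalues $n^2-1$. You also get linear independence by differentiating in $\beta$.
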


\begin{proof}
Set $q=t_\beta$. By \cref{lem:intertwining}, the function $q$
satisfies \cref{eq:positive-Dirichlet}, so \cref{lem:positive-Dirichlet} will apply. First let $u$ be a (real) Neumann eigenfunction of
$\Aop$ with eigenvalue $\lambda<0$.  Eigenfunctions are smooth, and differentiating
and recalling \cref{lem:intertwining} gives
\[
 \Bop(u_\beta)=\lambda u_\beta,
 \qquad u_\beta|_{\partial C}=0.
\]
By \cref{lem:positive-Dirichlet}, $u_\beta=0$.
Thus $u=u(\alpha)$ and
\[
 -u_{\alpha\alpha}-u=\lambda u.
\]
The eigenvalues of this operator on $\R/2\pi\mathbb Z$ are $n^2-1$, $n\in\mathbb Z$.
Its only negative eigenvalue is $-1$, with constant eigenfunctions. Conversely,
$\Aop1=-1$, so that eigenvalue occurs. Self-adjointness and compact resolvent then
imply $\ind\mathfrak q_a=1$.

Now suppose $u\in \ker\mathcal{A}$. Then $\Bop(u_\beta)=0$ (with Dirichlet data), and
the last part of \cref{lem:positive-Dirichlet} gives
$u_\beta=c_0t_\beta$ for a constant $c_0$.
Therefore
\[
 u=c_0t+v(\alpha),
 \qquad -v''-v=0,
\]
which proves \cref{eq:A-kernel}.
These three kernel functions are linearly independent: differentiating
a linear relation in $\beta$ first makes the coefficient of $t$ zero,
and $\cos\alpha,\sin\alpha$ are independent.
\end{proof}

\begin{corollary}[Sharp Poincar\'e inequality on the cylinder]
\label[corollary]{cor:cylinder-inequality}
Let $a$ and $t$ satisfy the hypotheses of \cref{thm:monotone-criterion} on
$C=[b_-,b_+]\times(\R/2\pi\mathbb Z).$ Then every real-valued $u\in H^1(C)$ with
zero mean with respect to the flat product measure, i.e. $\int_C u\dbda=0,$
satisfies
\begin{equation}\label{eq:cylinder-inequality}
\int_C u^2\dbda \leq \int_C\left(a|u_\beta|^2+|u_\alpha|^2\right)\dbda.
\end{equation}
No boundary condition on $u$ is required at $\beta=b_\pm$. Equality holds precisely
for $u\in\Span\{t,\cos\alpha,\sin\alpha\}$. In particular, the constant $1$ in
\cref{eq:cylinder-inequality} is optimal.
\end{corollary}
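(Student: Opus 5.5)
The plan is to deduce the inequality from the spectral decomposition of the Neumann operator $\Aop$ supplied by \cref{thm:monotone-criterion}. Since $\Aop$ is self-adjoint with compact resolvent on $L^2(C,d\beta\,d\alpha)$, and its form domain is all of $H^1(C)$ with no condition at $\beta=b_\pm$, there is an $L^2$-orthonormal eigenbasis $\{\phi_k\}$ of smooth eigenfunctions. Its eigenvalues are $\lambda_0<\lambda_1\le\lambda_2\le\cdots$, and \cref{thm:monotone-criterion} identifies them: $\lambda_0=-1$ with $\phi_0$ constant, followed by $\lambda=0$ with multiplicity $3$ and eigenspace $\Span\{t,\cos\alpha,\sin\alpha\}$, with all remaining eigenvalues strictly positive.

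For $u\in H^1(C)$ we expand $u=\sum_k c_k\phi_k$. Density of $C^\infty(C)$ in $H^1(C)$ and the form definition of $\Aop$ give
\[
\mathfrak q_a(u)=\sum_k\lambda_k c_k^2 .
\]
The zero-mean condition $\int_C u\dbda=0$ says exactly that $u$ is $L^2$-orthogonal to the constants, i.e.\ $c_0=0$. Hence $\mathfrak q_a(u)\ge0$, which is \cref{eq:cylinder-inequality} after moving $\int_C u^2$ to the right-hand side. Equality forces $c_k=0$ whenever $\lambda_k>0$, so $u$ lies in $\ker\Aop$. Conversely, every element of $\ker\Aop$ satisfies $\mathfrak q_a(u)=\mathfrak q_a(u,u)=\int_C(\Aop u)u=0$.

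It remains to check that the kernel elements actually have zero mean, so that the equality cases are admissible. For $\cos\alpha$ and $\sin\alpha$ this is immediate. For $t$, note that $\Aop1=-1$ and $\Aop t=0$, so self-adjointness gives
\[
-\int_C t=\int_C t\,\Aop1=\int_C(\Aop t)\cdot1=0 .
\]
Thus equality holds precisely on $\Span\{t,\cos\alpha,\sin\alpha\}$. Optimality of the constant follows because for any constant $\kappa>1$, the nonzero function $\cos\alpha$ would violate the inequality with $\kappa$ in place of $1$.

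The only delicate point is justifying the spectral expansion for general $H^1$ functions rather than smooth Neumann ones. This is standard form theory: the form domain of $\Aop$ equals $H^1(C)$, and $\mathfrak q_a+2\|\cdot\|^2$ is equivalent to the $H^1$ norm, so the expansion converges in the form norm. This is also why no boundary condition on $u$ is needed.
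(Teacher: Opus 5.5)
Your proof is correct. It rests on the same input as the paper's: the spectral conclusions of \cref{thm:monotone-criterion}, together with the fact that zero mean is orthogonality to the constants. The difference lies in how you package this.

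You expand $u$ in an $L^2$ eigenbasis of $\Aop$ and read off both the inequality and the equality case from $\mathfrak q_a(u)=\sum_k\lambda_k c_k^2$. The paper instead works with the form alone. Since $\mathfrak q_a(1,u)=-\int_C u\dbda=0$ and $\mathfrak q_a(1,1)=-|C|<0$, a zero-mean $u$ with $\mathfrak q_a(u)<0$ would make $\mathfrak q_a$ negative definite on $\Span\{1,u\}$, contradicting $\ind\mathfrak q_a=1$. For the equality case, the paper uses a first-variation argument to get $\mathfrak q_a(u,v)=0$ for all $v\in H^1(C)$, hence $u\in D(\Aop)$ and $\Aop u=0$.

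The paper's route needs only the index count and the kernel. It does not use the identification of the negative eigenspace as the constants, nor convergence of eigenfunction expansions in the form norm. Your route gives slightly more for free: a quantitative gap estimate, with constant the first positive eigenvalue of $\Aop$, on the $L^2$-orthogonal complement of $\Span\{1,t,\cos\alpha,\sin\alpha\}$.

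Your self-adjointness computation $-\int_C t\dbda=\int_C(\Aop t)\cdot 1\dbda=0$ is the same integration by parts that the paper writes out explicitly.
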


\begin{proof}
Let $\mathcal V=\{u\in H^1(C):\int_Cu\dbda=0\}$.
For $u\in\mathcal V$,
\[
\mathfrak q_a(1,u)=0,
\qquad \mathfrak q_a(1,1)=-|C|<0,
\qquad |C|=2\pi(b_+-b_-).
\]
If $\mathfrak q_a(u,u)<0$, the form would be negative definite
on $\Span\{1,u\}$, contrary to its index being one.
Thus $\mathfrak q_a(u,u)\geq0$ on $\mathcal V$, which proves
\cref{eq:cylinder-inequality}.

Suppose $u\in\mathcal V$ attains equality. For every
$v\in\mathcal V$ and $s\in\R$,
\[
0\leq\mathfrak q_a(u+sv,u+sv)
=2s\mathfrak q_a(u,v)+s^2\mathfrak q_a(v,v).
\]
Taking both signs of small $s$ gives $\mathfrak q_a(u,v)=0$.
Also $\mathfrak q_a(u,1)=0$. Decomposing any $H^1$ function
into its mean and a member of $\mathcal V$, we obtain
$\mathfrak q_a(u,v)=0$ for every $v\in H^1(C)$.
By the definition of the associated operator, $u\in D(\Aop)$
and $\Aop u=0$. The kernel is given by \cref{eq:A-kernel}.

Conversely, the kernel functions all have zero mean. For $t$,
integration using its Neumann data and angular periodicity gives
\[
\begin{aligned}
0=\int_C\Aop t\dbda
&=-\int_0^{2\pi}[a t_\beta]_{b_-}^{b_+}\,d\alpha
  -\int_{b_-}^{b_+}[t_\alpha]_0^{2\pi}\,d\beta
  -\int_Ct\dbda\\
&=-\int_Ct\dbda.
\end{aligned}
\]
The same conclusion for $\cos\alpha$ and $\sin\alpha$ is immediate.
Every kernel function therefore attains equality.
Taking $u=\cos\alpha\not\equiv0$ also proves optimality of the
constant.
\end{proof}

\section{The first Steklov eigenspace and uniqueness}
\label{sec:conclusion}

In this brief section we deduce the main \Cref{thm:main} using the stream coordinates and index characterisation above.

\begin{proof}[Proof of \cref{thm:steklov}]
Choose the flux axis and angular coordinate as in \cref{prop:angle}, and apply
\cref{prop:stream-cylinder} to construct the global cylinder. For $a=R^4$,
\Cref{prop:monotone} ensures that every hypothesis of
\Cref{thm:monotone-criterion} is satisfied. Consequently,
\[
 \ind\mathfrak q_a=1.
\]
Multiplication by $R$ and the coordinate diffeomorphism $\Phi=(\beta,\alpha):\Sigma\to C$ give an isomorphism
$H^1(C)\stackrel{\sim}{\to} H^1(\Sigma)$. By \cref{prop:form-identity}, this isomorphism preserves
the quadratic forms and therefore their indices:
\[
 \ind\Q_\Sigma=\ind\mathfrak q_a=1.
\]
\Cref{lem:index-criterion} now gives $\sigma_1=1$.

To identify the eigenspace, let $f\in\Eone$ and write $f=Ru$. By \cref{lem:harmonic-quotient},
\[
 \Aop u=0,\qquad u_\beta|_{\partial C}=0.
\]
Thus \cref{eq:A-kernel} yields
\[
 u=c_0\frac hR+c_1\cos\alpha+c_2\sin\alpha,
 \qquad
 f=c_0h+c_1R\cos\alpha+c_2R\sin\alpha.
\]
These are precisely linear combinations of the three ambient
coordinate functions in our orthonormal basis $\{e_1,e_2,e\}$.
Conversely, all three belong to $\Eone$ by
\cref{lem:coordinate-identities}.
Their linear independence follows from \cref{eq:A-kernel} and $R>0$.
Hence $\dim\Eone=3$, which gives
$\sigma_1=\sigma_2=\sigma_3=1<\sigma_4$.
\end{proof}

\begin{proof}[Proof of \cref{thm:main}]
By \cref{thm:steklov}, $\Sigma$ is a smooth free-boundary minimal annulus in the
unit ball, which is embedded by first (nonzero) Steklov eigenfunctions. The hypotheses of \Cref{thm:FS} are therefore satisfied, and it follows
that $\Sigma$ is congruent to $\Ccc$.
\end{proof}

\section{Classification of spherical Bernoulli solutions}
\label{sec:bernoulli}

Let $(\Omega,u)$ satisfy the hypotheses of
\cref{thm:bernoulli}. That is, $\Omega\subset\Sph^2$ is a smooth
annular domain and
\[
(\Delta_0+2)u=0,\qquad u>0
\quad\text{in }\Omega,
\qquad
u=0,\qquad |\nabla_0u|=1
\quad\text{on }\partial\Omega.
\]
Here $g_0$ is the round metric, $\nabla^0$ its Levi--Civita
connection, and $\nabla_0,\Delta_0$ the corresponding gradient
and Laplacian. We denote by $\nu$ the outward $g_0$-unit
conormal of $\partial\Omega$. Positivity of $u$ and the
boundary point lemma give
\[
\partial_\nu u=-1,\qquad \nabla_0u=-\nu
\quad\text{on }\partial\Omega.
\]
In this section, we exploit the well-known correspondence between solutions $u$ as above and \textit{branched} minimal immersions $\Sigma$. In the case of annular domains, we will show that $\Sigma$ is in fact an embedded free-boundary minimal annulus, allowing us to use our classification \cref{thm:main}. The (induced) metric and outward unit conormal on $\Sigma$ are denoted by $g$ and $\eta$, respectively; throughout this construction $g_0$ and $\nu$ refer to the spherical domain. We remark that the correspondence above may be regarded as the limiting case (with cap radius tending to 0) of the duality between free-boundary minimal surfaces in spherical caps and capillary minimal surfaces in the hemisphere, described in \cite{NZcaps}.

\subsection{Preliminaries on spherical Bernoulli solutions}
\label{sec:support}

In this subsection, we collect some preliminary results on the overdetermined problem in the sphere. These results are known to experts (see for instance \cite{EM} and references therein), but we include some proofs for the sake of readability.

\begin{lemma}[Homogeneous extension]
\label[lemma]{lem:homogeneous}
Define
\[
\mathcal C_\Omega:=\{rp:r>0,\ p\in\Omega\},
\qquad
U(rp):=ru(p).
\]
Then
\begin{equation}\label{eq:homogeneous-identities}
\Delta_{\R^3}U(rp)
=\frac1r(\Delta_0u+2u)(p),
\qquad
\nabla_{\R^3}U(rp)=u(p)p+\nabla_0u(p).
\end{equation}
Consequently, \cref{eq:OEP} is equivalent, under this
one-homogeneous extension, to
\begin{equation}\label{eq:Euclidean-Bernoulli}
\begin{cases}
U>0,\quad \Delta_{\R^3}U=0
    &\text{in }\mathcal C_\Omega,\\
U=0,\quad |\nabla U|=1
    &\text{on }\partial\mathcal C_\Omega\setminus\{0\},
\end{cases}
\end{equation}
where the boundary gradient is taken from within
$\mathcal C_\Omega$.
\end{lemma}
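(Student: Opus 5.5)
The plan is to compute directly in polar coordinates $x=rp$ on $\R^3\setminus\{0\}$, where the Euclidean metric is $dr^2+r^2g_0$. For the gradient, I would split $\nabla_{\R^3}U$ into its radial and tangential parts. The radial derivative is $\partial_r U=u(p)$. The tangential gradient of a function on the sphere of radius $r$ is $r^{-2}$ times its $g_0$-gradient; as a Euclidean vector this becomes $r^{-1}\nabla_0(ru)=\nabla_0u(p)$, viewed as a vector tangent to $\Sph^2$ at $p$. Together these give $\nabla U(rp)=u(p)p+\nabla_0u(p)$.

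For the Laplacian, I would use the standard decomposition
\[
\Delta_{\R^3}=\partial_r^2+\frac2r\partial_r+\frac1{r^2}\Delta_0 .
\]
Applying it to $U=ru$, the three terms contribute $0$, $\tfrac2r u$ and $\tfrac1r\Delta_0u$ respectively, which gives $\Delta U=\tfrac1r(\Delta_0u+2u)$.

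For the equivalence with \cref{eq:Euclidean-Bernoulli}, I would argue piece by piece:
\begin{itemize}
\item Harmonicity of $U$ in the cone $\mathcal C_\Omega$ is equivalent to $(\Delta_0+2)u=0$ in $\Omega$, since $r>0$ and every point of the cone has the form $rp$.
\item Positivity passes directly between $U$ and $u$.
\item The vanishing conditions $U=0$ and $u=0$ correspond, because $\partial\mathcal C_\Omega\setminus\{0\}=\{rp:r>0,\ p\in\partial\Omega\}$.
\item On the boundary, $u=0$ gives $\nabla U=\nabla_0u(p)$, so $|\nabla U|=|\nabla_0u|_{g_0}$. Since $\nabla_0u$ is tangent to the unit sphere at $p$, its Euclidean and $g_0$ lengths agree.
\end{itemize}
The gradients are one-sided limits from inside the domain, which is consistent because $u\in C^\infty(\overline\Omega)$ makes $U$ smooth up to $\partial\mathcal C_\Omega\setminus\{0\}$.

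Finally, for the converse direction I would check that any such $U$, being one-homogeneous by construction, restricts to $u=U|_{\Omega}$. The only point needing care is this identification of the tangential gradient under the embedding $\Sph^2\subset\R^3$; the rest is routine.
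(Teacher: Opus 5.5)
Your proposal is correct and follows the paper's proof: both apply the polar-coordinate formulas $\Delta_{\R^3}V=V_{rr}+\frac2rV_r+\frac1{r^2}\Delta_0V$ and $\nabla_{\R^3}V=V_rp+\frac1r\nabla_0V$ to $V=ru$, use $u=0$ on the conical boundary to get $|\nabla U|=|\nabla_0u|$, and recover the spherical problem by restricting the one-homogeneous function to $r=1$.
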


\begin{proof}
For a smooth function $V(r,p)$, the polar-coordinate
formulas are
\[
\Delta_{\R^3}V
=V_{rr}+\frac2rV_r+\frac1{r^2}\Delta_0V,
\qquad
\nabla_{\R^3}V=V_rp+\frac1r\nabla_0V.
\]
Applying them to $V(r,p)=ru(p)$ proves
\cref{eq:homogeneous-identities}. On the conical boundary,
$u=0$, so $|\nabla U|=|\nabla_0u|$. Conversely, a
one-homogeneous function is determined by its restriction
to $r=1$, and the same identities recover \cref{eq:OEP}.
\end{proof}

Define
\begin{equation}\label{eq:reconstruction}
X(p):=\nabla_0u(p)+u(p)p,
\qquad
\Bt:=\nabla_0^2u+ug_0,
\qquad
S:=\Bt^{\sharp_{g_0}}.
\end{equation}
Here $S_p:T_p\Sph^2\to T_p\Sph^2$ is characterised by
\[
g_0(S_pv,w)=\Bt_p(v,w).
\]
Equivalently,
\[
S(v)=\nabla^0_v\nabla_0u+uv.
\]
We first regard $X$ as a smooth map; immersion and injectivity
will be proved below.

\begin{lemma}\label[lemma]{lem:tensor}
The tensor $\Bt$ is symmetric and trace-free, and satisfies
the Codazzi identity
\[
(\nabla^0_v\Bt)(w,z)=(\nabla^0_w\Bt)(v,z).
\]
Moreover,
\begin{equation}\label{eq:DX}
dX_p(v)=S_pv,\qquad
X(p)\cdot p=u(p),\qquad
|X|^2=u^2+|\nabla_0u|^2.
\end{equation}
\end{lemma}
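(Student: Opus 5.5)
Symmetry of $\Bt$ is immediate, since the Hessian $\nabla_0^2u$ and $ug_0$ are both symmetric. For the trace we compute $\tr_{g_0}\Bt=\Delta_0u+2u=0$, using \cref{eq:OEP} and $\dim\Sph^2=2$.

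For the Codazzi identity we differentiate and use the Ricci identity on the round sphere. We have
\[
(\nabla^0_v\Bt)(w,z)=\nabla^3_0u(v,w,z)+(vu)\,g_0(w,z).
\]
Since the Hessian is symmetric in its last two slots, commuting the first two slots of $\nabla_0^3u$ produces a curvature term. With curvature one, $R(v,w)z=g_0(w,z)v-g_0(v,z)w$, this gives
\[
\nabla_0^3u(v,w,z)-\nabla_0^3u(w,v,z)=-du(R(v,w)z)=-(vu)g_0(w,z)+(wu)g_0(v,z).
\]
The sign convention needs checking, but the correct choice must be the one that makes the difference cancel. Adding the difference of the lower-order terms, $(vu)g_0(w,z)-(wu)g_0(v,z)$, then yields zero.

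For \cref{eq:DX}, regard $X$ as an $\R^3$-valued map and differentiate with the ambient flat connection $D$. For a tangent field $Y$ on $\Sph^2\subset\R^3$, the Gauss formula gives $D_vY=\nabla^0_vY-g_0(v,Y)p$, because the second fundamental form of the unit sphere with outward normal $p$ is $-g_0$. Also $D_vp=v$. Hence
\[
dX(v)=\nabla^0_v\nabla_0u-g_0(v,\nabla_0u)p+(vu)p+uv=\nabla^0_v\nabla_0u+uv=S v,
\]
since $vu=g_0(v,\nabla_0u)$ and the normal terms cancel.

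The remaining identities are direct. Because $\nabla_0u\perp p$ and $|p|=1$, we get $X\cdot p=u$, and by Pythagoras $|X|^2=u^2+|\nabla_0u|^2$.

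The only delicate point is the sign bookkeeping in the Ricci identity and in the Gauss formula; both are fixed by checking against the rotational solution or a linear function $u=p\cdot a$, for which $\Bt=0$.
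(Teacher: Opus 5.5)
Your proposal is correct. The symmetry and trace-free parts, the computation of $dX_p(v)=S_pv$ via the Gauss formula $D_vY=\nabla^0_vY-g_0(v,Y)p$ with $D_vp=v$, and the identities for $X\cdot p$ and $|X|^2$ are exactly what the paper does.

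You differ on the Codazzi identity. You prove it intrinsically from the Ricci identity on the round sphere. The paper instead proves $dX=S$ first and then gets Codazzi from the flatness of $\R^3$. The identity $D_v(dX(w))-D_w(dX(v))-dX([v,w])=0$ has tangential component $(\nabla^0_vS)w-(\nabla^0_wS)v$, by the Gauss formula and torsion-freeness, and pairing with $z$ gives the claim. The paper's route needs no curvature convention, since the curvature of $\Sph^2$ is already encoded in the Gauss formula. Yours does not rely on the dual map, and the same argument gives Codazzi for $\nabla^2u+Kug$ on any space form of curvature $K$, at the cost of sign bookkeeping.

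On that bookkeeping: your formula $\nabla_0^3u(v,w,z)-\nabla_0^3u(w,v,z)=-(vu)g_0(w,z)+(wu)g_0(v,z)$ is correct for the standard convention $R(v,w)=\nabla^0_v\nabla^0_w-\nabla^0_w\nabla^0_v-\nabla^0_{[v,w]}$. You should drop the sentence that the correct sign ``must be the one that makes the difference cancel'', which as written assumes what is to be proved.

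Your closing check against linear functions is in fact a rigorous way to settle the sign. The commutator at $p$ is linear in $du_p$, and every covector at $p$ is $du_p$ for some $u=a\cdot p$. For such $u$ one has $\nabla_0^2u=-ug_0$, which yields exactly your formula.
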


\begin{proof}
Let $D$ denote Euclidean differentiation. For
$v\in T_p\Sph^2$, the Gauss formula on the unit sphere gives
\[
D_v(\nabla_0u)
=\nabla^0_v\nabla_0u-v(u)p,
\qquad
D_v(up)=v(u)p+uv.
\]
Adding these identities gives $dX_p(v)=S_pv$.
The remaining identities follow from $\nabla_0u\perp p$.

Symmetry is immediate, and
\[
\tr_{g_0}\Bt=\Delta_0u+2u=0.
\]
For tangent vector fields $v,w$, commutation of the
derivatives of $X$ gives
\[
D_v(dX(w))-D_w(dX(v))-dX([v,w])=0.
\]
Taking the component tangent to $\Sph^2$ and using
$dX=S$, we obtain
\[
(\nabla^0_vS)w=(\nabla^0_wS)v.
\]
Pairing with $z$ and using $\nabla^0g_0=0$ gives the
stated Codazzi identity.
\end{proof}

\begin{lemma}[The $P$-function]
\label[lemma]{lem:P}
The function $P:=u^2+|\nabla_0u|^2$ satisfies
\begin{equation}\label{eq:P}
\Delta_0P=2|\Bt|_{g_0}^2,\qquad
P<1\ \text{in }\Omega,\qquad
P=1,\quad \partial_\nu P>0\ \text{on }\partial\Omega.
\end{equation}
\end{lemma}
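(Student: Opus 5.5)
We prove the Bochner identity first, then obtain $P<1$ in $\Omega$ from the maximum principle, and finally read off the boundary statements from the Bernoulli data.

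\emph{Step 1: the Bochner identity.} We have $P=u^2+|\nabla_0u|^2$, so
\[
\tfrac12\Delta_0P=|\nabla_0u|^2+u\Delta_0u+|\nabla_0^2u|^2+\langle\nabla_0u,\nabla_0\Delta_0u\rangle+\mathrm{Ric}_{g_0}(\nabla_0u,\nabla_0u).
\]
We substitute $\Delta_0u=-2u$ and $\mathrm{Ric}_{g_0}=g_0$. The terms $|\nabla_0u|^2-2|\nabla_0u|^2+|\nabla_0u|^2$ cancel, which leaves
\[
\tfrac12\Delta_0P=|\nabla_0^2u|^2-2u^2.
\]
Next we expand $|\Bt|^2=|\nabla_0^2u+ug_0|^2=|\nabla_0^2u|^2+2u\Delta_0u+2u^2=|\nabla_0^2u|^2-2u^2$. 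Comparing the two expressions gives $\Delta_0P=2|\Bt|^2_{g_0}$. As a cross-check, \cref{lem:tensor} gives $P=|X|^2$ with $dX=S$. Since $\Bt$ is trace-free and Codazzi, the standard harmonic-map-type identity $\Delta_0|X|^2=2|dX|^2+2X\cdot\Delta X$ should reproduce the same formula.

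\emph{Step 2: boundary values.} On $\partial\Omega$ we have $u=0$ and $|\nabla_0u|=1$, so $P=1$ there.

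\emph{Step 3: $P<1$ inside.} By Step 1, $P$ is subharmonic, so $P\le1$ on $\overline\Omega$ by the maximum principle. We rule out $P\equiv1$: in that case $\Bt\equiv0$, so $S\equiv0$ and $X$ is constant. Evaluating $X=\nabla_0u+up$ on $\partial\Omega$ gives $X=-\nu$ along each boundary curve, so $\nu$ would be constant as a vector of $\R^3$. That is impossible for a closed curve on $\Sph^2$. Alternatively, $\nabla_0^2u=-ug_0$ together with $u=0$ on $\partial\Omega$ forces $u$ to be a restricted linear function $p\mapsto v\cdot p$, whose positivity set is a hemisphere, not an annulus. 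The strong maximum principle then gives $P<1$ in $\Omega$.

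\emph{Step 4: $\partial_\nu P>0$.} Hopf's lemma for the subharmonic $P$ at a boundary maximum gives $\partial_\nu P>0$ directly, provided $\partial\Omega$ is smooth, which holds. As a direct check, on $\partial\Omega$ we have $\partial_\nu P=2u\partial_\nu u+2\nabla_0^2u(\nabla_0u,\nu)=-2\nabla_0^2u(\nu,\nu)$. With $\Delta_0u=0$ on the boundary, $\nabla_0^2u(\nu,\nu)=-\kappa\,\partial_\nu u$ up to sign conventions for the geodesic curvature $\kappa$, so positivity amounts to a sign on $\kappa$. Rather than track this, we rely on Hopf.

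The main obstacle is the nontriviality in Step 3, ruling out $\Bt\equiv0$; the argument via constancy of $X$, or the hemisphere argument, settles it.
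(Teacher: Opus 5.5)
Your overall route is the paper's: the Bochner computation (your Step 1 is correct), $P\le 1$ by the weak maximum principle, ruling out $\Bt\equiv0$, then the strong maximum principle and the boundary point lemma for $\partial_\nu P>0$. Relying on Hopf rather than a curvature computation is exactly what the paper does.

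The one genuine error is your first argument for $\Bt\not\equiv0$. It is false that a closed curve on $\Sph^2$ cannot have constant conormal: the great circle $\{a\cdot p=0\}$ has conormal $\pm a$ at every point. In fact the hemisphere $\{a\cdot p>0\}$ with $u=a\cdot p$, $|a|=1$, satisfies the equation, positivity, and both boundary conditions, and has $P\equiv1$. So any contradiction must use the topology of $\Omega$, and an argument that looks at a single boundary curve cannot work. What constancy of $X=a$ actually gives is $u=a\cdot p$ with $|a|=1$. It also gives that each boundary component lies in the great circle $\{a\cdot p=0\}$, and is therefore equal to it, being a Jordan curve. The paper concludes from this that the two disjoint boundary components would coincide, which is impossible.

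Your alternative hemisphere argument is valid and does use topology, but it should be spelled out. $\Omega$ is open in the open hemisphere $H=\{a\cdot p>0\}$. Since $\partial\Omega\subset\{a\cdot p=0\}$, we have $\overline\Omega\cap H=\Omega$, so $\Omega$ is also relatively closed in the connected set $H$. Hence $\Omega=H$, whose boundary is a single circle rather than two disjoint Jordan curves. With the first argument dropped, or repaired as above, the proof is complete.
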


\begin{proof}
Bochner's formula, $\operatorname{Ric}_{g_0}=g_0$, and
$\Delta_0u=-2u$ give
\[
\Delta_0|\nabla_0u|^2
=2|\nabla_0^2u|^2-2|\nabla_0u|^2,
\qquad
\Delta_0u^2=2|\nabla_0u|^2-4u^2.
\]
Also,
\[
|\Bt|^2
=|\nabla_0^2u|^2+2u\Delta_0u+2u^2
=|\nabla_0^2u|^2-2u^2.
\]
Adding the preceding identities proves the equation for
$P$. The boundary conditions give $P=1$ on $\partial\Omega$.

We claim that $\Bt\not\equiv0$. Otherwise, $dX=0$ by
\cref{lem:tensor}, so connectedness gives $X=a\in\R^3$ and
$u(p)=a\cdot p$. The boundary gradient condition excludes
$a=0$. Both boundary components would then be Jordan
curves contained in the great circle $\{a\cdot p=0\}$.
Each would have to equal that circle, contradicting their
disjointness.

Thus $P$ is not constant. Since $\Delta_0P\geq0$ and
$P=1$ on the boundary, the strong maximum principle gives
$P<1$ in $\Omega$, and the boundary point lemma gives
$\partial_\nu P>0$ on $\partial\Omega$; see
\cite[Chapter 3]{GT}.
\end{proof}

Let $K_\pm$ be the closed complementary discs:
\begin{equation}\label{eq:K}
\Sph^2=\overline\Omega\cup K_+\cup K_-,
\qquad
K_+\cap K_-=\varnothing,
\qquad
\partial\Omega=\partial K_+\sqcup\partial K_-.
\end{equation}
For a $g_0$-unit boundary tangent $T$, set
\begin{equation}\label{eq:kappa}
\kappa:=g_0(\nabla^0_TT,\nu).
\end{equation}
Since $\nu$ points out of $\Omega$ and into the
corresponding disc $K_\pm$, this is the geodesic curvature
towards the interior of that disc.

\begin{lemma}\label[lemma]{lem:boundary-B}
Along $\partial\Omega$,
\begin{equation}\label{eq:boundary-B}
\Bt(T,T)=\kappa,\qquad
\Bt(T,\nu)=0,\qquad
\Bt(\nu,\nu)=-\kappa,\qquad
\kappa>0.
\end{equation}
Consequently, $K_\pm$ are geodesically convex discs with
everywhere positive boundary geodesic curvature, and each
is contained in an open hemisphere.
\end{lemma}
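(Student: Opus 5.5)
The three identities are local computations along the boundary, and I would get them from $u=0$ and $\nabla_0u=-\nu$ there. Then $\kappa>0$ comes from the $P$-function of \cref{lem:P}, and convexity of $K_\pm$ follows by a standard spherical argument.

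\textbf{Step 1: the tangential components.} Since $u=0$ on $\partial\Omega$, we have $\Bt=\nabla_0^2u$ there. Since $u$ is constant along the boundary, $\nabla_0^2u(T,T)=T(Tu)-(\nabla^0_TT)u=-g_0(\nabla^0_TT,\nabla_0u)$. Substituting $\nabla_0u=-\nu$ gives $\Bt(T,T)=g_0(\nabla^0_TT,\nu)=\kappa$. For the mixed term,
\[
\Bt(T,\nu)=g_0(\nabla^0_T\nabla_0u,\nu)=-g_0(\nabla^0_T\nu,\nu)=-\tfrac12T|\nu|^2=0 .
\]
Trace-freeness from \cref{lem:tensor} then gives $\Bt(\nu,\nu)=-\kappa$.

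\textbf{Step 2: the sign of $\kappa$.} On the boundary, $\partial_\nu P=2u\,\partial_\nu u+2g_0(\nabla^0_\nu\nabla_0u,\nabla_0u)$. The first term vanishes because $u=0$. Since $\nabla_0u=-\nu$ and $u=0$, the second term is $2\nabla_0^2u(\nu,-\nu)=-2\Bt(\nu,\nu)=2\kappa$. Hence $\partial_\nu P=2\kappa$, and \cref{lem:P} gives $\partial_\nu P>0$, so $\kappa>0$ everywhere on $\partial\Omega$.

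\textbf{Step 3: convexity.} Since $\nu$ points into $K_\pm$, Step 2 says that $\partial K_\pm$ is a smooth Jordan curve whose geodesic curvature towards the interior of $K_\pm$ is strictly positive. I would then invoke the spherical analogue of the classical fact that such a curve bounds a convex disc. Gauss--Bonnet gives $\int_{\partial K}\kappa\,ds+|K|=2\pi$, so $|K|<2\pi$. This rules out the possibility that the locally convex side is the ``big'' side.

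For the global statements I would argue as follows.
\begin{itemize}
\item Take a great circle tangent to $\partial K$ at a point and rotate it continuously. Strict positivity of $\kappa$ shows that the curve locally lies strictly on one side of each tangent great circle.
\item A connectedness argument (the set of boundary points whose tangent great circle supports $K$ is open and closed) shows that $K$ lies in the closed hemisphere determined by every tangent great circle.
\item Intersecting these hemispheres gives geodesic convexity.
\item Positivity of $\kappa$ prevents $K$ from touching the bounding great circle except at the tangency point. Hence $K$ lies in the open hemisphere centred at, for instance, an interior point obtained by slightly moving the pole of one supporting circle.
\end{itemize}

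I expect this global step to be the main obstacle, namely passing from local convexity to lying inside a supporting hemisphere. The alternative route I would try first is a gnomonic (central) projection from a point where $K$ is locally supported. This turns great circles into lines and preserves the sign of curvature. The claim then reduces to the planar theorem that a Jordan curve of positive curvature bounds a convex domain. This needs care to ensure that the curve stays in the projected hemisphere, and I would handle that by a continuity argument in the choice of projection centre.
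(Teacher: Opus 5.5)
Your Steps 1 and 2 are the paper's argument: the same boundary differentiation of $u=0$ with $\nabla_0u=-\nu$ gives the three identities, and $\kappa>0$ follows from $\partial_\nu P=-2\Bt(\nu,\nu)=2\kappa$ together with \cref{lem:P}. For global convexity the paper, like your primary plan, simply cites \cite[Proposition 2.1]{BL}.

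Keep that citation, because your self-contained sketch does not close. The open--closed argument needs a first boundary point whose tangent great circle already supports $K$. Producing that point is essentially the hemisphere statement itself, and $|K|<2\pi$ from Gauss--Bonnet does not supply it. The gnomonic route likewise presupposes that $\partial K$ lies in an open hemisphere.

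The genuine difference is the open-hemisphere step. The paper uses the averaged conormal $v_K=-\int_{\partial K}n\,\mathrm d\ell$ with $n=-\nu$. The supporting property at every boundary point gives $v_K\cdot y\ge0$ on $K$. Equality would force $n\cdot y\equiv0$, hence $T\cdot y\equiv0$ by $n'=\kappa T$, so $y$ would be parallel to $\gamma(\ell)$ for every $\ell$, which is impossible.

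Your tilted pole also works, once you spell out two points:
\begin{enumerate}
\item The claim that $K$ touches the supporting great circle at $p$ only there needs geodesic convexity, not just local positivity of $\kappa$. A second contact point would put a great-circle arc into $\partial K$, which $\kappa>0$ forbids.
\item The tilt must be towards $p$, for example $w_\varepsilon=-n(p)+\varepsilon p$. Near $p$ you use $p\cdot y>0$, and on the rest of $K$ compactness gives a uniform bound $-n(p)\cdot y\ge\mu>0$. A tilt along $T(p)$ or away from $p$ would leave $p$ outside the open hemisphere.
\end{enumerate}
The paper's vector is canonical and avoids the single-contact lemma. Yours needs the supporting property at only one boundary point, plus strict convexity there.
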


\begin{proof}
Differentiating twice $u=0$ along the boundary gives
\[
0=T(Tu) =\nabla_0^2u(T,T) +g_0(\nabla_0u,\nabla^0_TT) = \nabla_0^2u(T,T)-\kappa.
\]
Likewise,
\[
0=T(\partial_\nu u) =\nabla_0^2u(T,\nu) + g_0(\nabla_0u,\nabla^0_T\nu) =\nabla_0^2u(T,\nu),
\]
because $\nabla_0u=-\nu$ and $\nabla^0_T\nu\perp\nu$. Since $u=0$ and $\tr_{g_0}\Bt=0$ on the boundary, these are the first three identities. Therefore
\[
\partial_\nu P =2\nabla_0^2u(\nu,\nabla_0u)+2u\partial_\nu u =-2\Bt(\nu,\nu)=2\kappa.
\]
The strict inequality in \cref{lem:P} proves $\kappa>0$.

The spherical convexity theorem \cite[Proposition 2.1]{BL} now implies that each $K_\pm$ is geodesically convex and lies on the inward side of every tangent great circle. Positive curvature excludes a great-circle segment in its boundary.

We also verify containment in an open hemisphere. Fix either disc $K$, parametrise its boundary by arclength $\ell$, and let $n=-\nu$ be its outward conormal. Writing $\gamma'(\ell)=T(\ell)$, differentiation of the orthonormal frame $(\gamma,T,n)$ gives
\[
n'=\kappa T.
\]
The supporting-hemisphere property says $n(\ell)\cdot y\leq0$ for every $y\in K$. Hence the vector
\[
v_K:=-\int_{\partial K}n\,\mathrm d\ell
\]
satisfies $v_K\cdot y\geq0$ on $K$. If equality held for some $y\in K$, the nonpositive continuous function $n(\ell)\cdot y$ would vanish identically. Differentiating and using $\kappa>0$ would give $T(\ell)\cdot y=0$ as well. Thus $y$ would be parallel to $\gamma(\ell)$ for every $\ell$, which is impossible for a regular boundary curve. Therefore
\[
v_K\cdot y>0\qquad(y\in K).
\]
In particular, $v_K\neq0$, and the hemisphere centred at $v_K/|v_K|$ contains $K$.
\end{proof}

\subsection{Annularity excludes branch points}
\label{sec:branches}

We first use a Hopf differential style argument to exclude the possibility of branch points of $X$:

\begin{proposition}\label[proposition]{prop:no-branches}
The endomorphism $S$ is invertible throughout $\overline\Omega$. Consequently, $X$ is a smooth conformal minimal immersion, with induced metric $g=X^*g_{\R^3}$ and unit normal along the immersion
\begin{equation}\label{eq:Gauss-normal}
N(p)=p.
\end{equation}
It satisfies
\begin{equation}\label{eq:free-boundary}
X(\Omega)\subset\B^3,\qquad X(\partial\Omega)\subset\Sph^2,\qquad dX(\eta)=X,
\end{equation}
where $\eta$ is the outward $g$-unit conormal.
\end{proposition}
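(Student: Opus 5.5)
The plan is to exploit the holomorphic structure of the trace-free Codazzi tensor $\Bt$. In a local conformal coordinate $z$ on $(\Omega,g_0)$, the $(2,0)$-part $\phi=\Bt(\partial_z,\partial_z)\,dz^2$ is a holomorphic quadratic differential, because $\Bt$ is symmetric, trace-free and Codazzi (\cref{lem:tensor}). Since $\Bt$ is trace-free, $\det S=-|\Bt|^2_{g_0}/2$, so $S$ is singular exactly where $\Bt=0$, that is, at the zeros of $\phi$. We already know $\Bt\not\equiv0$ from the proof of \cref{lem:P}, so these zeros are isolated in the interior. On the boundary, \cref{lem:boundary-B} gives $\Bt(T,T)=\kappa>0$ and $\Bt(T,\nu)=0$, so $\Bt\neq0$ on $\partial\Omega$.

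The key step is to show that $\phi$ has no interior zeros, by counting zeros on the annulus. Along $\partial\Omega$, the identities $\Bt(T,\nu)=0$ and $\Bt(T,T)>0$ say that $\phi(T,T)$ is real and positive, i.e.\ the boundary is a horizontal trajectory of $\phi$. I would then double $\overline\Omega$ across its boundary: conformally $\overline\Omega$ is a flat annulus $[0,L]\times\R/2\pi\mathbb Z$ with coordinate $z=x+iy$, and $\phi=f(z)\,dz^2$ has $\operatorname{Im}f=0$ on $x=0,L$. Schwarz reflection extends $\phi$ holomorphically to the doubled torus, where the canonical bundle is trivial, so $f$ is a holomorphic function on a compact torus and hence constant. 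Therefore $f$ is a nonzero real constant, so $\phi$ has no zeros anywhere. An equivalent route is Poincaré--Hopf for the line field of $\phi$: each zero contributes a negative index, while $\chi=0$ and the line field is tangent to the boundary. Either way $\Bt\neq0$ on $\overline\Omega$, so $S$ is invertible. The main obstacle is getting the boundary behaviour right; this is exactly the step that uses annularity, since on higher-connectivity domains zeros can occur.

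It then remains to check the stated properties. Since $dX=S$ maps $T_p\Sph^2$ to itself, $X$ is an immersion with tangent plane $p^\perp$, so $N=p$. Because $S$ is symmetric and trace-free, its eigenvalues are $\pm\lambda$, so $S$ is a homothety composed with a reflection. Hence $X$ is conformal, possibly orientation-reversing. Minimality follows because the Gauss map $N(p)=p$ has differential $\mathrm{id}=S^{-1}\circ S$; the shape operator on the surface is therefore $S^{-1}$, which is trace-free. From \cref{lem:P} and \cref{eq:DX}, $|X|^2=P<1$ in $\Omega$ and $|X|^2=1$ on $\partial\Omega$. Finally, on the boundary $X=\nabla_0u=-\nu$, while $S\nu=\Bt(\nu,\nu)\nu=-\kappa\nu$ because $\Bt(T,\nu)=0$. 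Thus $dX(\nu)=-\kappa\nu$, and the outward $g$-conormal is $\eta=-\nu/\kappa$, giving $dX(\eta)=\nu$. This has the wrong sign, and I would resolve it by orientation: $\eta$ should be defined as outward relative to the image. Equivalently, $dX(-\nu)$ is parallel to $X$ and points outward since $\partial_\nu P>0$, which means $X$ reverses the normal direction and gives $dX(\eta)=X$ with $|X|=1$.
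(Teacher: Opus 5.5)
Your invertibility argument is essentially the paper's. The paper also passes to a conformal flat cylinder and reads the trace-free Codazzi equations as Cauchy--Riemann equations for $(\Bt(\partial_s,\partial_s),\Bt(\partial_s,\partial_\theta))$. It uses $\Bt(T,\nu)=0$ to kill the imaginary part on both boundary circles. The only difference is that it concludes by the maximum principle for the harmonic function $\Bt(\partial_s,\partial_\theta)$ with zero boundary values, where you use Schwarz reflection to the doubled torus; this is the same argument. Your checks of the immersion property, conformality, $N(p)=p$, minimality (the shape operator is $-S^{-1}$, trace-free either way) and $|X|^2=P$ are also fine.

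The final boundary step is wrong as written. You correctly found $dX(\nu)=S\nu=-\kappa\nu$. Since $X=-\nu$ on $\partial\Omega$, this already says $dX(\nu)=\kappa X$.

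The outward $g$-unit conormal is $\eta=\nu/\kappa$, not $-\nu/\kappa$:
\begin{itemize}
\item ``outward'' refers to the parameter domain and does not depend on the metric;
\item $\nu$ points out of $\Omega$ and $\kappa>0$;
\item since $dX(T)=ST=\kappa T$, one has $g(\nu,T)=\langle\kappa X,\kappa T\rangle=0$ and $g(\nu,\nu)=\kappa^2|X|^2=\kappa^2$.
\end{itemize}
Hence $dX(\eta)=X$ directly, and there is no sign to fix.

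Your vector $-\nu/\kappa$ is the inward conormal, and the proposed repair is false on both counts. First, $\partial_\nu P=2\langle X,dX(\nu)\rangle=2\kappa>0$ says that $dX(\nu)$, not $dX(-\nu)$, points radially outward; indeed $dX(-\nu)=-\kappa X$. Second, there is no orientation reversal, and no freedom to redefine $\eta$ ``relative to the image'': for an immersion of a manifold with boundary, the image's outward conormal is $dX(\eta)$ anyway. As written, that paragraph simply asserts the conclusion. Replace it with the short computation above.
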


\begin{proof}
Conformal uniformisation of a smooth annulus, with smooth extension to its boundary, gives a diffeomorphism from
\[
[a,b]\times(\R/2\pi\mathbb Z)
\]
onto $\overline\Omega$ for which the pulled-back round metric is
\[
g_0=e^{2\lambda}(ds^2+d\theta^2), \qquad \lambda\in C^\infty.
\]
We use the same notation for the pulled-back tensors.
Set
\[
f:=\Bt(\partial_s,\partial_s), \qquad h:=\Bt(\partial_s,\partial_\theta).
\]
As $\Bt$ is trace-free, we have $\Bt(\partial_\theta,\partial_\theta)=-f$. In these coordinates, the Codazzi equations reduce to
\[
h_s-f_\theta=0,\qquad f_s+h_\theta=0.
\]

It follows that $h_{ss}+h_{\theta\theta}=0$.
At $s=a,b$, the vector $\partial_s$ is conormal and
$\partial_\theta$ tangent to the boundary, so
$h=0$ there by \cref{eq:boundary-B}. The maximum principle
on the flat cylinder gives $h\equiv0$. The two first-order
equations then imply that $f$ is a real constant $m$.
Thus
\begin{equation}\label{eq:B-constant}
\Bt=m(ds^2-d\theta^2).
\end{equation}
At either boundary component,
\[
m=\Bt_{ss}
=e^{2\lambda}\Bt(\nu,\nu)
=-e^{2\lambda}\kappa<0.
\]
In particular, $m\neq0$. The matrices of $S$ and the
induced metric are therefore
\[
S=me^{-2\lambda}
\begin{pmatrix}1&0\\0&-1\end{pmatrix},
\qquad
g=m^2e^{-2\lambda}(ds^2+d\theta^2).
\]
This proves invertibility, conformality, and regularity
up to the boundary.

Since $dX_p(T_p\Sph^2)=p^\perp$, the vector $N(p)=p$ is
a unit normal. The shape operator $\mathcal S$ satisfies
\[
\mathcal S(dX_p(v))=-dN_p(v)=-v.
\]
Under the identification of the image tangent plane with
$p^\perp$, it is therefore $-S_p^{-1}$. The eigenvalues
of $S_p$ are opposite and nonzero, so
$\tr\mathcal S=-\tr(S_p^{-1})=0$. Hence $X$ is minimal.

The ball and sphere inclusions follow from
$|X|^2=P$ and \cref{lem:P}. Along the boundary,
\[
X=-\nu,\qquad
dX(\nu)=S\nu=-\kappa\nu=\kappa X,
\qquad
dX(T)=\kappa T.
\]
Consequently, $g(\nu,\nu)=\kappa^2$ and $g(\nu,T)=0$.
The outward $g$-unit conormal is thus
$\eta=\nu/\kappa$, and $dX(\eta)=X$.
\end{proof}

\subsection{Spherical polarity and global embeddedness}
\label{sec:embedding}

Until injectivity has been established, we regard
$(\overline\Omega,g)$ as the parameter annulus of the
immersion $X$.

\begin{lemma}[Polarity of a positively curved spherical disc]
\label[lemma]{lem:polars}
Let $K\subset\Sph^2$ be a closed geodesically convex disc,
contained in an open hemisphere, whose smooth boundary
has strictly positive inward geodesic curvature $\kappa$.
Let $n$ be its outward unit conormal. Define the
negative polar by
\begin{equation}\label{eq:polar-definition}
D:=\{y\in\Sph^2:y\cdot p\leq0
                \text{ for every }p\in K\}.
\end{equation}
Then $D$ is a geodesically convex closed disc in an open hemisphere, with smooth boundary, and
\begin{equation}\label{eq:polar-map}
\partial K\rightarrow\partial D,
\qquad p\mapsto n(p),
\end{equation}
is a diffeomorphism. At $n(p)$, the outward conormal of $D$ is $p$, and its inward geodesic curvature is $1/\kappa(p)$.
\end{lemma}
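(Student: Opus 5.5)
The plan is to pass to the closed convex cone $K^\wedge=\{rx:r\ge0,\ x\in K\}$, as in \cref{lem:cap-flux}, and identify $D$ with the unit sphere section of the negative polar cone. First, $K$ lies in an open hemisphere, so $K^\wedge$ is a pointed closed convex cone. Since $\Int K\neq\varnothing$, its polar cone $(K^\wedge)^\circ=\{y:y\cdot x\le0\ \forall x\in K^\wedge\}$ is also pointed with nonempty interior. It follows that $D=(K^\wedge)^\circ\cap\Sph^2$ is a closed geodesically convex disc. It lies in an open hemisphere: if $p_0\in\Int K$, then $y\cdot p_0<0$ for all $y\in D$. Indeed, $y\cdot p_0=0$ together with $y\cdot p\le0$ on a neighbourhood of $p_0$ in $K$ would force $y\perp$ an open set of directions, so $y=0$.

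Next we identify $\partial D$. By the supporting-hemisphere property established in \cref{lem:boundary-B}, $n(p)\cdot y\le0$ for all $y\in K$, so $n(p)\in D$. Moreover $n(p)\cdot p=0$ with $p\in K$, so $n(p)\in\partial D$. Conversely, let $y\in\partial D$. Then $\max_K y\cdot x=0$; if the maximum were negative, $y$ would be interior to $D$. This maximum is attained at some $p\in\partial K$ (if attained at an interior point, then $y\cdot x\equiv 0$ nearby, forcing $y=0$). At such a maximiser, $y$ is orthogonal to $p$ and to $T(p)$, and $y\cdot x\le0$ on $K$, so $y=n(p)$. Hence $p\mapsto n(p)$ maps $\partial K$ onto $\partial D$. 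Injectivity follows from strict convexity ($\kappa>0$): if $n(p)=n(p')$ with $p\ne p'$, then the great circle $n(p)^\perp$ supports $K$ at both points. By convexity it would then contain the geodesic segment between them, and that segment would lie in $\partial K$, where $\kappa=0$, a contradiction.

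For smoothness and the diffeomorphism property, we use the frame identity $n'=\kappa T$ from \cref{lem:boundary-B}, with arclength $\ell$ on $\partial K$. Since $\kappa>0$, the map $\ell\mapsto n(\ell)$ is a smooth immersion with unit tangent $T$ and speed $\kappa$. Being a continuous bijection between circles, it is therefore a diffeomorphism onto $\partial D$, and in particular $\partial D$ is smooth.

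Finally we compute the conormal and curvature of $D$. At $n(p)$, the orthonormal frame is $(n,T,p)$, with $p\perp n$ and $p\perp T$, so $\pm p$ is the conormal of $\partial D$. Since $p\cdot y\le0$ for all $y\in D$ (by definition, as $p\in K$), while $p\cdot n(p)=0$, the vector $p$ points out of $D$. Parametrising by $\tilde\ell$ with $d\tilde\ell=\kappa\,d\ell$, we get $dp/d\tilde\ell=T/\kappa$, so the outward conormal $p$ of $D$ turns with rate $1/\kappa$ along the unit tangent $T$. By the same frame relation $n'=\kappa T$ applied to $D$, this gives inward geodesic curvature $1/\kappa(p)$. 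The main care needed is orientation bookkeeping here and in the identification of $\partial D$; the rest is routine convexity.
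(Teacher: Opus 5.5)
Your proof is correct and follows essentially the same route as the paper. The shared steps are: the hemisphere bound from an interior point $p_0\in\Int K$; identifying $\partial D$ with the conormal image through supporting great circles and contact points, with uniqueness from $\kappa>0$ ruling out a geodesic segment in $\partial K$; the identity $n'=\kappa T$ to get an injective immersion; and the outward conormal $p$ with curvature $1/\kappa$ after reparametrising by $\mathrm d\tilde\ell=\kappa\,\mathrm d\ell$.

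The differences are cosmetic: you use polar-cone duality where the paper uses the gnomonic projection to see that $D$ is a disc, and you differentiate the conormal $p$ rather than the tangent $T$. One small point: the supporting-great-circle property and the frame identity you quote from \cref{lem:boundary-B} hold for any smooth geodesically convex spherical disc. Justify them directly rather than through that lemma, which concerns only the Bernoulli discs $K_\pm$.
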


\begin{proof}
Choose $h\in\Sph^2$ with $h\cdot p>0$ on $K$.
Compactness makes this inequality uniform, so $-h$ lies
in the interior of $D$. Next fix $p_0\in\Int K$.
Every $y\in D$ satisfies
\[
y\cdot p_0<0.
\]
Indeed, equality would make $y$ tangent to $\Sph^2$ at
$p_0$, and the points
$\cos t\,p_0+\sin t\,y$ would belong to $K$ for sufficiently
small $t>0$, contradicting the defining inequality for $D$.
Thus $D$ lies in the open hemisphere centred at $-p_0$.

The set
\[
\mathcal L:=\{v\in\R^3:v\cdot p\leq0
                      \text{ for every }p\in K\}
\]
is a closed convex cone and $D=\mathcal L\cap\Sph^2$.
The map
\[
y\longmapsto\frac{y}{-y\cdot p_0}
\]
identifies $D$ with the compact convex planar set
$\mathcal L\cap\{v:-v\cdot p_0=1\}$, which has nonempty
relative interior. Hence $D$ is a closed topological
disc; convexity of $\mathcal L$ gives geodesic convexity
of $D$.

At $p\in\partial K$, the tangent great circle supports
$K$, so
\[
n(p)\cdot x\leq0\quad(x\in K),
\qquad n(p)\cdot p=0.
\]
Therefore $n(p)\in\partial D$.
Conversely, if $y\in\partial D$, compactness of $K$
implies that $y\cdot p=0$ at some $p\in K$; otherwise
all the defining inequalities would remain strict near
$y$, making it interior to $D$. Such a contact point
belongs to $\partial K$. It is unique, because two
distinct contact points would force their shorter
great-circle segment to lie in $\partial K$, contrary
to $\kappa>0$. Smoothness of $\partial K$ then implies
$y=n(p)$. This proves bijectivity.

Parametrise $\partial K$ by arclength $s$ and write
$T=p_s$. Its Euclidean frame equations are
\[
p_s=T,\qquad
T_s=-p-\kappa n,\qquad
n_s=\kappa T.
\]
Since $\kappa>0$, the conormal map is an immersion.
Its bijectivity and compactness therefore make it a
diffeomorphism onto a smooth boundary.
The arclength $s_D$ of this image satisfies
$ds_D=\kappa\,ds$, and its unit tangent is $T$. Hence
\[
\frac{dT}{ds_D}=-n-\frac1\kappa p.
\]
The inequalities defining $D$ show that $p$ is its
outward conormal at $n(p)$. The last identity therefore
gives inward geodesic curvature $1/\kappa$.
\end{proof}

Apply \cref{lem:polars} to $K_+$ and $K_-$, and denote
their negative polars by $D_+$ and $D_-$.
Their disjointness has not yet been established.
Since the outward conormal of $K_\pm$ is $-\nu$,
the boundary restriction of $X$ is precisely the
conormal map:
\begin{equation}\label{eq:boundary-polar}
X(p)=-\nu(p)=n_{K_\pm}(p),
\qquad p\in\partial K_\pm.
\end{equation}

\begin{lemma}[Radial projection near the boundary]
\label[lemma]{lem:collar}
The map
\begin{equation}\label{eq:radial-map}
F:=\frac{X}{|X|}:\overline\Omega\longrightarrow\Sph^2
\end{equation}
is well defined and is a local diffeomorphism in
$\Omega$. Near each boundary component, it maps a
neighbourhood on the $\Omega$ side homeomorphically
onto a neighbourhood on the exterior side of the
corresponding polar disc $D_\pm$.

More precisely, let $r$ be inward spherical distance
from $\partial\Omega$, and let $\rho_D$ be signed
spherical distance from $\partial D_\pm$, positive
outside $D_\pm$. Then
\begin{equation}\label{eq:collar-expansion}
\rho_D(F(p,r))
=\frac{\kappa(p)}2r^2+O(r^3),
\qquad p\in\partial K_\pm.
\end{equation}
\end{lemma}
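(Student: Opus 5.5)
Well-definedness comes first. By \cref{lem:P} and \cref{prop:no-branches}, $|X|^2=P$, and $P>0$ on $\overline\Omega$. At an interior point $u>0$, so $|X|\geq X\cdot p=u>0$. On the boundary $|X|=1$. Hence $F$ is smooth on $\overline\Omega$.

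For the local diffeomorphism property in $\Omega$, I differentiate
\[
dF=\frac{1}{|X|}\Bigl(dX-F\,(F\cdot dX)\Bigr).
\]
Since $dX_p=S_p$ is an isomorphism onto $p^\perp$ (\cref{prop:no-branches}), $dF_p$ is injective exactly when $F(p)\notin dX_p(T_p\Sph^2)=p^\perp$. That is, we need $X(p)\cdot p=u(p)\neq0$. This holds in $\Omega$ by positivity of $u$, so $F$ is a local diffeomorphism there. The same computation shows that $F$ degenerates only on $\partial\Omega$, consistent with the remark that radial projection is singular along the free boundary.

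For the collar expansion, fix $p\in\partial K_\pm$ and let $\gamma(r)=\exp_p(-r\nu)$ be the inward unit-speed geodesic, so $F(p,r)=F(\gamma(r))$. I Taylor expand $X(\gamma(r))$ using \cref{lem:tensor} and \cref{eq:boundary-B}:
\[
X(p)=-\nu,\qquad \frac{d}{dr}X=S(-\nu)=\kappa\nu .
\]
Thus $X(\gamma(r))=-\nu+r\kappa\nu+O(r^2)$ to first order, and this is parallel to $-\nu$. The normal-direction information therefore sits in the second order term. A cleaner route is to measure signed distance to $\partial D_\pm$ through the supporting function. By \cref{lem:polars}, the outward conormal of $D$ at $n(p)=-\nu(p)=X(p)$ is $p$. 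So to leading order $\rho_D(y)\approx y\cdot p'$, where $p'$ is the contact point for the nearest boundary point of $D$. More robustly, $\rho_D(y)\geq0$ iff $y\cdot x>0$ for some $x\in K$, and
\[
\sin\rho_D(y)=\max_{x\in K}(y\cdot x)
\]
for $y$ near $\partial D$ outside. Testing with $x=\gamma(r)\in\Omega$ side is not allowed since $\gamma(r)\notin K$, so I instead use $x=p$ and nearby boundary points of $K$.

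The key quantity is
\[
X(\gamma(r))\cdot p=u(\gamma(r))\,(\gamma(r)\cdot p)+\nabla_0u(\gamma(r))\cdot p .
\]
With $\gamma(r)\cdot p=\cos r$ and $u=r-\tfrac12\partial_{rr}\cdots$ expanded to second order, this quantity comes out as $O(r^2)$. Its $r^2$ coefficient must be computed from $u_{rr}=-\nabla^2_0u(\nu,\nu)$ (using $\Bt(\nu,\nu)=-\kappa$, $u=0$) together with the curvature of $\gamma$ in $\R^3$. I expect this to give $\tfrac{\kappa}{2}r^2$ after maximising over nearby $x\in\partial K$; the maximisation contributes only at higher order because the first variation vanishes by the tangency $X(p)\cdot T=0$. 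This computation is the main obstacle: one must track exactly which second order terms survive and confirm that optimising the contact point does not change the $r^2$ coefficient, which requires $\kappa>0$ and the frame equations from \cref{lem:polars}.

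Given \cref{eq:collar-expansion} with $\kappa>0$, the function $\rho_D\circ F$ is positive for small $r>0$ and strictly increasing in $r$. Along $\partial\Omega$, the tangential map $p\mapsto F(p,0)=n(p)$ is a diffeomorphism onto $\partial D_\pm$ by \cref{lem:polars} and \cref{eq:boundary-polar}. Using coordinates $(p,r)\mapsto(\text{foot point},\rho_D)$, I would show that $F$ is injective on a thin collar, with the foot point depending on $p$ to leading order via $n$. Invariance of domain then makes $F$ a homeomorphism of the collar onto a one-sided neighbourhood of $\partial D_\pm$ outside $D_\pm$.
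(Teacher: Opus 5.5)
Your arguments for well-definedness and for the interior local diffeomorphism property are correct. The criterion $F(p)\notin p^\perp$, i.e.\ $u(p)\neq0$, is equivalent to the paper's Jacobian formula $J_F=u\det S/|X|^3$.

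The gap is in the collar expansion \cref{eq:collar-expansion}, which is the real content of the lemma. You never compute the $r^2$ coefficient; you only say you expect $\kappa/2$. The one ingredient you do state also has the wrong sign. Along $\gamma$ one has $\gamma'(0)=-\nu$, and the Hessian is quadratic, so $u_{rr}(0)=\nabla_0^2u(\nu,\nu)=\Bt(\nu,\nu)=-\kappa$. Thus $u=r-\tfrac{\kappa}{2}r^2+O(r^3)$, not $u_{rr}=-\nabla_0^2u(\nu,\nu)=+\kappa$. This matters. Because the transverse direction $T$ is orthogonal to $p$, the $p$-component of $\nabla_0u(\gamma(r))$ is $-u_r\sin r$, so $X(\gamma(r))\cdot p=u\cos r-u_r\sin r$. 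With the correct sign this equals $\tfrac{\kappa}{2}r^2+O(r^3)$. With your sign it equals $-\tfrac{\kappa}{2}r^2+O(r^3)$, which would put the collar image \emph{inside} $D_\pm$ and break the gluing used in \cref{prop:embedding}.

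The idea you are missing is the one-line identity the paper uses. Since $dX_q(v)\perp q$, one has $X_r(p,r)\cdot\Psi(p,r)\equiv0$ along the inward geodesic $\Psi(p,r)=\gamma(r)$. Differentiating at $r=0$ gives $X_{rr}(p,0)\cdot p=-X_r(p,0)\cdot y_0=\kappa$, hence $F_{rr}(p,0)\cdot p=\kappa$.

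With this, your concern about optimising the contact point in $\sin\rho_D=\max_{x\in K}y\cdot x$ is unnecessary. You already observed that $X(\gamma(r))$ is parallel to $-\nu$ to first order, i.e.\ $F_r(p,0)=0$, so $F(p,r)-n(p)=O(r^2)$. Since $\rho_D$ is smooth near $\partial D_\pm$ with $\nabla_0\rho_D(n(p))=p$ by \cref{lem:polars}, the chain rule gives the second derivative $\kappa$ directly; the Hessian term is killed by $F_r=0$.

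Finally, for the homeomorphism, monotonicity of $\rho_D\circ F$ in $r$ at fixed $p$ does not by itself give injectivity, because the foot point also moves with $r$. The paper uses $F_s(s,0)=\kappa T\neq0$ and the inverse function theorem to replace the source coordinate $s$ by the target foot-point coordinate $\sigma$. Then $F(\sigma,r)=(\sigma,r^2A(\sigma,r))$ with $A(\sigma,0)=\kappa/2>0$. The substitution $\tau=r\sqrt{A}$ reduces $F$ to $(\sigma,\tau)\mapsto(\sigma,\tau^2)$, which gives injectivity and one-sided surjectivity at once.
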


\begin{proof}
In the interior, $X\cdot p=u>0$, while $|X|=1$ on the
boundary. Hence $X$ never vanishes and $F$ is defined
on $\overline\Omega$. For a positively oriented
$g_0$-orthonormal frame $(v_1,v_2,p)$,
\[
X\cdot(dX(v_1)\times dX(v_2))=u\det S.
\]
Thus the oriented Jacobian of $F$, with respect to
the round area forms, is
\begin{equation}\label{eq:radial-Jacobian}
J_F=\frac{u\det S}{|X|^3}\neq0
\quad\text{in }\Omega.
\end{equation}

We next determine which side of $\partial D_\pm$
contains the image near the boundary. For
$p\in\partial\Omega$, put $y_0=X(p)=-\nu(p)$ and use
the inward normal parametrisation
\[
\Psi(p,r):=\cos r\,p+\sin r\,y_0,
\qquad 0\leq r<\delta.
\]
For sufficiently small $\delta>0$, this parametrises
a neighbourhood of this boundary component in $\overline\Omega$.
We write $X(p,r)$ and $F(p,r)$ for composition with
$\Psi$. The boundary identities give
\begin{equation}\label{eq:u-collar}
u(\Psi(p,r))
=r-\frac{\kappa(p)}2r^2+O(r^3),
\qquad
X_r(p,0)=-\kappa(p)y_0.
\end{equation}
Writing $L=|X|$, we obtain
\[
L(p,0)=1,\qquad L_r(p,0)=-\kappa(p),
\qquad F_r(p,0)=0.
\]

Since $dX_q(v)\perp q$, we have
$X_r(p,r)\cdot\Psi(p,r)=0$. Differentiating at $r=0$
yields
\[
X_{rr}(p,0)\cdot p
=-X_r(p,0)\cdot y_0=\kappa(p).
\]
Both $X(p,0)$ and $X_r(p,0)$ are orthogonal to $p$,
so differentiating $F=X/L$ twice also gives
\[
F_{rr}(p,0)\cdot p=\kappa(p).
\]
By \cref{lem:polars},
$\nabla_0\rho_D(y_0)=p$. Since $F_r(p,0)=0$, the chain
rule therefore gives
\[
\left.\frac{d}{dr}\rho_D(F(p,r))\right|_{r=0}=0,
\qquad
\left.\frac{d^2}{dr^2}\rho_D(F(p,r))\right|_{r=0}
=\kappa(p).
\]
Taylor's formula proves \cref{eq:collar-expansion},
uniformly along the compact boundary component.

To obtain the asserted homeomorphism, let $s$ be
arclength on $\partial\Omega$, and use target
coordinates $(\sigma,\rho_D)$, where $\sigma$ is
arclength on $\partial D_\pm$ extended along its
normal geodesics. At $r=0$,
\[
F_s(s,0)=\kappa(s)T(s),
\]
and the boundary map is a diffeomorphism by
\cref{lem:polars}. After shrinking the neighbourhood,
we may therefore replace the source coordinate $s$
by the target coordinate $\sigma$.
In these coordinates, Taylor's formula gives
\[
F(\sigma,r)=\bigl(\sigma,r^2A(\sigma,r)\bigr),
\qquad A\in C^\infty,\quad A(\sigma,0)>0.
\]
The change of variable
\[
\tau=r\sqrt{A(\sigma,r)}
\]
is a smooth diffeomorphism near $r=0$ on the source
side. The map $F$ consequently becomes
\[
(\sigma,\tau)\longmapsto(\sigma,\tau^2),
\qquad \tau\geq0.
\]
This is a homeomorphism onto the exterior side
$\rho_D\geq0$. Compactness allows the neighbourhoods
to be chosen around the entire boundary component.
\end{proof}

\begin{proposition}[Global injectivity]
\label[proposition]{prop:embedding}
The polar discs $D_+$ and $D_-$ are disjoint, and
\begin{equation}\label{eq:radial-homeomorphism}
F:\overline\Omega\longrightarrow
\Sph^2\setminus(\Int D_+\cup\Int D_-)
\end{equation}
is a homeomorphism whose restriction to $\Omega$
is a diffeomorphism onto
$\Sph^2\setminus(D_+\cup D_-)$.
Moreover, $X$ is a smooth embedding with image a
properly embedded free-boundary minimal annulus in
$\overline{\B^3}$.
\end{proposition}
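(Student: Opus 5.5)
The proof is a covering-space/degree argument for the radial map $F$ of \cref{lem:collar}. It proceeds in three steps: extend $F$ to a local homeomorphism on a closed surface, show that extension is a homeomorphism of the sphere, and read off disjointness, the image, and embeddedness of $X$.

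The first step builds the closed surface. Glue the polar discs to $\overline\Omega$ along the boundary diffeomorphisms \cref{eq:boundary-polar}. Concretely, form the abstract closed surface
\[
\widehat\Omega:=\overline\Omega\cup_{\partial K_+}D_+\cup_{\partial K_-}D_-,
\]
where $p\in\partial K_\pm$ is identified with $n_{K_\pm}(p)\in\partial D_\pm$. Topologically $\widehat\Omega$ is a sphere, since it is an annulus with two discs attached. Define $\widehat F:\widehat\Omega\to\Sph^2$ by $F$ on $\overline\Omega$ and by the inclusion on each $D_\pm$. This is continuous because $F=X=n_{K_\pm}$ on $\partial\Omega$.

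The second step checks that $\widehat F$ is a local homeomorphism. In $\Omega$ this follows from \cref{eq:radial-Jacobian}. In the interiors of the $D_\pm$ it is trivial. At a glued boundary point, \cref{lem:collar} shows that a one-sided collar of $\overline\Omega$ maps homeomorphically onto a one-sided collar on the exterior side $\rho_D\geq0$ of $D_\pm$. Meanwhile $D_\pm$ fills the side $\rho_D\leq0$, and the two pieces agree along $\partial D_\pm$. So a neighbourhood of the glued point maps homeomorphically onto a neighbourhood of its image.

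The third step concludes. A local homeomorphism from a compact space to a connected Hausdorff space is a covering map. Since $\Sph^2$ is simply connected and $\widehat\Omega$ is connected, $\widehat F$ is a homeomorphism. Injectivity of $\widehat F$ immediately gives $D_+\cap D_-=\varnothing$. It also gives that $F(\overline\Omega)$ is exactly $\Sph^2\setminus(\Int D_+\cup\Int D_-)$, and that $F(\Omega)$ misses $D_\pm$ and equals $\Sph^2\setminus(D_+\cup D_-)$. Combined with the nonvanishing Jacobian, $F|_\Omega$ is a diffeomorphism onto that set, which is \cref{eq:radial-homeomorphism}.

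Injectivity of $F=X/|X|$ forces injectivity of $X$. Since $X$ is an immersion by \cref{prop:no-branches} and $\overline\Omega$ is compact, $X$ is a smooth embedding. The free-boundary conditions \cref{eq:free-boundary} then make $X(\overline\Omega)$ a properly embedded free-boundary minimal annulus.

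The main obstacle is the local homeomorphism property at the glued boundary. The collar statement in \cref{lem:collar} only gives a homeomorphism onto the closed exterior side, so I must check that the exterior collar and the disc $D_\pm$ meet exactly along $\partial D_\pm$ with matching parametrisation. That matching is the identity via $n_{K_\pm}$ in the coordinates $(\sigma,\rho_D)$.

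A secondary point is orientation. One must make sure $F$ maps toward the exterior of $D_\pm$ rather than folding back into it. This is precisely the sign $\kappa>0$ in \cref{eq:collar-expansion}, so it is already in hand.
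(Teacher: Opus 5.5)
Your proposal is correct and is essentially the paper's proof. You extend $F$ over two discs to a local homeomorphism of a sphere, using \cref{lem:collar} at the seams, conclude that it is a one-sheeted covering of $\Sph^2$, and then read off disjointness of $D_\pm$, the image of $F$, and injectivity of $X$; the only difference is cosmetic, since you glue $D_\pm$ abstractly to $\overline\Omega$ via $n_{K_\pm}$ and use the inclusion, whereas the paper keeps the domain $\Sph^2=\overline\Omega\cup K_+\cup K_-$ and extends $F|_{\partial K_\pm}$ to homeomorphisms $\Psi_\pm:K_\pm\to D_\pm$ by coning.
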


\begin{proof}
The boundary diffeomorphism
\[
F|_{\partial K_\pm}:\partial K_\pm\to\partial D_\pm
\]
extends to a homeomorphism
$\Psi_\pm:K_\pm\to D_\pm$. Indeed, after choosing
homeomorphisms of both discs with the closed unit
disc, a boundary homeomorphism
$h:S^1\to S^1$ extends by
\[
re^{i\theta}\longmapsto r\,h(e^{i\theta}),
\qquad 0\leq r\leq1.
\]
Define
\[
\widehat F:\Sph^2\longrightarrow\Sph^2
\]
to equal $F$ on $\overline\Omega$ and $\Psi_\pm$ on
$K_\pm$. These definitions agree on their common
boundaries, so $\widehat F$ is continuous.

It is a local homeomorphism in $\Omega$ by
\cref{eq:radial-Jacobian}, and in $\Int K_\pm$ by
construction. Near $\partial K_\pm$, the map
$\Psi_\pm$ takes the $K_\pm$ side to the interior
side of $D_\pm$, while \cref{lem:collar} takes the
$\Omega$ side to the exterior side. Their inverses
agree on $\partial D_\pm$, so they combine into a
continuous local inverse across that boundary.
Thus $\widehat F$ is a local homeomorphism everywhere.

Its image is open by local invertibility and closed
by compactness, hence is all of $\Sph^2$.
A proper local homeomorphism is a covering map.
Since its domain is connected and its target
$\Sph^2$ is simply connected, this covering has one
sheet. Therefore $\widehat F$ is a homeomorphism.
In particular, the images $D_\pm=\widehat F(K_\pm)$
are disjoint, and restriction to $\overline\Omega$
gives \cref{eq:radial-homeomorphism}.
Its interior inverse is smooth by
\cref{eq:radial-Jacobian}.

Finally, $X(p)=X(q)$ implies $F(p)=F(q)$, hence $p=q$.
Thus $X$ is an injective immersion of a compact
manifold with boundary, and therefore an embedding.
The free-boundary and properness assertions follow
from \cref{eq:free-boundary}.
\end{proof}

\begin{remark}\label[remark]{rem:two-spheres}
The original domain $\Omega$ is the Gauss image of the dual
surface: writing $N$ now as a field on the image, $N(X(p))=p$.
Its radial image is instead
$\Sph^2\setminus(D_+\cup D_-)$.
The Gauss image and radial image are distinct constructions and
have not been assumed to coincide. Moreover, the boundary behaviour of
$F$ is modelled by $\tau\mapsto\tau^2$, so
\cref{eq:radial-homeomorphism} is not a
diffeomorphism at the boundary. This does not affect
the smoothness of $X$ or the boundary regularity of
the stream coordinates in \cref{prop:stream-cylinder}.
\end{remark}

\subsection{Classification via the dual annulus}
\label{sec:classification}

Recall the constants from \cref{eq:constants}:
\[
T\tanh T=1,\qquad
s_*=\tanh T,\qquad
c=\frac1{T\cosh T}
=s_*\sqrt{1-s_*^2}.
\]

\begin{corollary}\label[corollary]{cor:catenoid}
Up to an orthogonal transformation of $\R^3$, the
image of the dual annulus is parametrised by
\begin{equation}\label{eq:catenoid}
X_*(v,\alpha)
=c\bigl(\cosh v\,e_r(\alpha)-ve\bigr),
\qquad
(v,\alpha)\in[-T,T]\times(\R/2\pi\mathbb Z),
\end{equation}
where $(e_1,e_2,e)$ is an orthonormal basis and
$e_r(\alpha)=\cos\alpha\,e_1+\sin\alpha\,e_2$.
\end{corollary}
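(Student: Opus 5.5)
By \cref{prop:embedding}, $X(\overline\Omega)$ is a smooth properly embedded free-boundary minimal annulus in $\overline{\B^3}$. So \cref{thm:main} gives $O\in\mathrm{O}(3)$ with $O X(\overline\Omega)=\Ccc$. After an orthogonal change of the basis $(e_1,e_2,e)$, the image is therefore the set \cref{eq:critical-param}.

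It remains to match \cref{eq:catenoid} with \cref{eq:critical-param}. Substitute $v=-s$. This maps $[-T,T]$ to itself, leaves $\cosh$ unchanged, and replaces $s$ by $-v$ in the third component. The two parametrised sets therefore coincide. Equivalently, \cref{eq:catenoid} is the image of \cref{eq:critical-param} under the reflection $e\mapsto-e$, which is itself in $\mathrm{O}(3)$ and can be absorbed into $O$. The corollary is a statement about images, so no parametrisation of the dual map $X$ itself needs to be identified.

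I expect no real obstacle. The only points to confirm are routine. First, the orientation or sign convention in front of $ve$ is harmless, because the statement allows $\mathrm{O}(3)$ and not only $\mathrm{SO}(3)$. Second, \cref{thm:main} applies to the image $X(\overline\Omega)$ with its induced structure: $X$ is a smooth embedding of a compact annulus, the interior goes to $\B^3$, the boundary goes to $\Sph^2$, and the surface meets $\Sph^2$ orthogonally by \cref{eq:free-boundary}. Both are already recorded in \cref{prop:no-branches,prop:embedding}. As a consistency check, one can verify that $|X_*|=1$ exactly at $v=\pm T$, using $c\cosh T=1/T$ and $c\,T=\tanh T/\cosh T$, so that $c^2(\cosh^2T+T^2)=1/T^2+\tanh^2 T\cdot\ldots$ reduces to $1$ via $T\tanh T=1$.
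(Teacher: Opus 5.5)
Your argument is correct and matches the paper's proof: \cref{prop:embedding} puts the dual annulus under the hypotheses of \cref{thm:main}, and the sign in front of $ve$ is absorbed by the reflection $e\mapsto-e$ (equivalently, by the substitution $v=-s$). One small slip is in your optional consistency check, which is not needed for the proof: in fact $cT=1/\cosh T$, not $\tanh T/\cosh T$, so that $c^2(\cosh^2T+T^2)=T^{-2}+\sech^2T=\tanh^2T+\sech^2T=1$.
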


\begin{proof}
By \cref{prop:embedding}, the dual surface
satisfies the hypotheses of \cref{thm:main}, and hence
is the critical catenoid. The minus sign in the axial
component of \cref{eq:catenoid} is an orthogonal
reflection of the usual parametrisation.
\end{proof}

\begin{proof}[Proof of \cref{thm:bernoulli}]
The dual map commutes with orthogonal transformations:
if $\widetilde\Omega=O\Omega$ and
$\widetilde u=u\circ O^{-1}$, then
$\widetilde X(Op)=OX(p)$. Thus, by \cref{cor:catenoid}, we may
apply the same orthogonal transformation to $(\Omega,u)$ and its
dual and assume that
$X(\overline\Omega)$ is the surface in \cref{eq:catenoid}.
Its unit normal with positive support is
\begin{equation}\label{eq:model-normal}
N_*(v,\alpha)
=\sech v\,e_r(\alpha)+\tanh v\,e.
\end{equation}
Indeed, this vector has unit length, is orthogonal
to $X_{*,v}$ and $X_{*,\alpha}$, and satisfies
\begin{equation}\label{eq:model-support}
X_*\cdot N_*
=c(1-v\tanh v)>0
\qquad(|v|<T).
\end{equation}
The original normal satisfies $N(X(p))=p$ and
$X(p)\cdot p=u(p)>0$, so it agrees with $N_*$,
rather than $-N_*$.

The Gauss image of the interior is therefore
\[
\Omega=\{p\in\Sph^2:|p\cdot e|<\tanh T\}.
\]
Writing $s=p\cdot e=\tanh v$, so that
$v=\artanh s$, the identity $u=X\cdot N$ becomes
\[
u(p)=c\bigl(1-s\,\artanh s\bigr).
\]
This proves
\cref{eq:classified-domain,eq:classified-function}.
The converse is verified in \cref{prop:explicit}.
\end{proof}

\begin{corollary}[Classification of the annular homogeneous solution]
\label[corollary]{cor:cone}
Let $U$ be a positive one-homogeneous harmonic function
on an open cone $\mathcal C\subset\R^3\setminus\{0\}$
whose spherical link is a smooth annulus.
Assume that $U$ is smooth up to
$\partial\mathcal C\setminus\{0\}$ and satisfies
$U=0$ and $|\nabla U|=1$ there.
Then there is $e\in\Sph^2$ such that, writing
$r=|x|$, $z=x\cdot e$, and $\rho=|x-ze|$,
\begin{equation}\label{eq:cone-domain}
\mathcal C
=\{x\neq0:|z|<s_*r\}
=\{x\neq0:|z|<\sinh T\,\rho\},
\end{equation}
and
\begin{equation}\label{eq:cone-solution}
U(x)
=cr\left(1-\frac zr\artanh\frac zr\right)
\qquad(x\in\mathcal C).
\end{equation}
Its extension by zero outside $\mathcal C$ and at the origin is
locally Lipschitz and is the axially symmetric annular Alt--Caffarelli
solution. No stability or minimising property is asserted.
\end{corollary}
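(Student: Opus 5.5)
The plan is to reduce to \cref{thm:bernoulli} via \cref{lem:homogeneous}, and then translate the conclusion back into Euclidean coordinates. Set $\Omega:=\mathcal C\cap\Sph^2$, the spherical link, and $u:=U|_{\Omega}$. By hypothesis $\Omega$ is a smooth spherical annulus. One-homogeneity gives $U(rp)=ru(p)$, so $u$ is smooth up to $\partial\Omega$, since $U$ is smooth up to $\partial\mathcal C\setminus\{0\}$. The identities \cref{eq:homogeneous-identities} then show that $(\Delta_0+2)u=0$ and $u>0$ in $\Omega$. On $\partial\Omega$ we have $u=0$, and $|\nabla_0u|=|\nabla U|=1$ there because the radial component $u(p)p$ vanishes. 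Thus $(\Omega,u)$ solves \cref{eq:OEP}. \Cref{thm:bernoulli} now supplies $e\in\Sph^2$ with $\Omega=\{|p\cdot e|<s_*\}$ and $u(p)=c(1-s\artanh s)$, where $s=p\cdot e$.

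Next we rescale. For $x=rp$ with $r=|x|$, we have $p\cdot e=z/r$. Hence $\mathcal C=\{x\neq0:|z|<s_*r\}$, and $U(x)=ru(x/r)$ is exactly \cref{eq:cone-solution}. For the second description in \cref{eq:cone-domain}, use $r^2=z^2+\rho^2$. The condition $|z|<s_*r$ is equivalent to
\[
z^2(1-s_*^2)<s_*^2\rho^2,
\qquad\text{that is,}\qquad
|z|<\frac{s_*}{\sqrt{1-s_*^2}}\,\rho .
\]
Since $s_*=\tanh T$, we have $s_*/\sqrt{1-s_*^2}=\tanh T\cosh T=\sinh T$, which gives the stated form.

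It remains to prove the local Lipschitz property of the zero extension. Inside $\mathcal C$ we have $|\nabla U(x)|^2=u^2+|\nabla_0u|^2=P(p)$. By \cref{lem:P}, $P\le1$, so $|\nabla U|\le1$ on $\mathcal C$. The extension is continuous, because $U\to0$ at $\partial\mathcal C$ and $U(x)\le r\max u\to0$ as $x\to0$. To get the Lipschitz bound, take two points $x,y$.
\begin{itemize}
\item If the segment $[x,y]$ lies in $\overline{\mathcal C}$, integrate $|\nabla U|\le1$ along it.
\item Otherwise, take the first and last points where the segment meets $\partial\mathcal C\cup\{0\}$. The extended function vanishes at both, and the pieces of the segment near $x$ and near $y$ lie in $\overline{\mathcal C}$. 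Each piece contributes at most its length.
\end{itemize}
This yields $|U(x)-U(y)|\le|x-y|$, so the extension is globally $1$-Lipschitz.

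The identification with the axially symmetric annular Alt--Caffarelli solution is then just the definition: the extension is harmonic in its positivity set and satisfies $|\nabla U|=1$ on the free boundary away from the origin. No minimality claim needs proof. The only real input is \cref{thm:bernoulli}, and the remaining steps are routine. The point that needs the most care is the Lipschitz bound across the free boundary and the vertex, and it follows from $P\le1$.
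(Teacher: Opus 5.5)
Your proposal is correct. The reduction to \cref{thm:bernoulli} via \cref{lem:homogeneous}, the rescaling $p\cdot e=z/r$, and the identity $s_*/\sqrt{1-s_*^2}=\sinh T$ are exactly what the paper does. The difference is in the Lipschitz step.

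The paper computes the \emph{weak gradient} of the zero extension. Integrating by parts across the smooth conical boundary produces no boundary term, because $U=0$ there. The vertex is handled by excising $B_\varepsilon$: the extra boundary term is $O(\varepsilon^3)$ and vanishes as $\varepsilon\to0$. So the distributional gradient is $\nabla U$ on $\mathcal C$ and zero outside, with norm at most one. This places the extension in $W^{1,\infty}_{\mathrm{loc}}$, and local Lipschitz continuity follows.

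You instead integrate directly along segments, splitting at the first and last contact with $\partial\mathcal C\cup\{0\}$. Both arguments rest on the same input, $|\nabla U|^2=u^2+|\nabla_0u|^2=P\le1$ from \cref{lem:P}. Your route is more elementary and bypasses the Sobolev characterisation of Lipschitz functions. It also gives an explicit global constant $1$. The paper's route additionally identifies the distributional gradient, which is the natural object for a weak one-phase solution. That identification also follows from your bound via Rademacher's theorem, since $\partial\mathcal C$ is a null set.

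One small tidy-up. Restrict your first case to segments lying in the open cone $\mathcal C$. A segment contained in $\overline{\mathcal C}$ can pass through the vertex, where $U$ is not differentiable, or touch $\partial\mathcal C$. Your second-case argument already handles every segment meeting $\partial\mathcal C\cup\{0\}$, including these. There, the piece adjacent to an endpoint outside $\mathcal C$ contributes nothing because $U$ vanishes at that endpoint.
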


\begin{proof}
Restrict $U$ to $\mathcal C\cap\Sph^2$ and apply
\cref{lem:homogeneous,thm:bernoulli}.
Homogeneity gives \cref{eq:cone-solution}, while
$r^2=\rho^2+z^2$ gives
\[
|z|<s_*r
\quad\Longleftrightarrow\quad
|z|<
\frac{s_*}{\sqrt{1-s_*^2}}\rho
=\sinh T\,\rho.
\]

The zero extension is continuous across the conical
boundary because $u=0$ there, and at the origin
because $|U(x)|\leq C|x|$.
By \cref{eq:homogeneous-identities,lem:P},
$|\nabla U|\leq1$ in $\mathcal C$.
Integration by parts across the smooth conical
boundary introduces no boundary term in the first
weak derivatives, since $U$ vanishes there.
To include the origin, perform the same calculation
outside $B_\varepsilon$. The additional boundary
term on $\partial B_\varepsilon$ is
$O(\varepsilon^3)$, using $|U|=O(\varepsilon)$
and $|\partial B_\varepsilon|=O(\varepsilon^2)$.
It therefore tends to zero.

Thus the weak gradient of the zero extension is
$\nabla U$ in $\mathcal C$ and zero outside, with
norm at most one. The extension belongs to
$W^{1,\infty}_{\mathrm{loc}}(\R^3)$ and is locally
Lipschitz. The harmonic and free-boundary equations
are those of
\cref{lem:homogeneous,prop:explicit}.
This is the classical axially symmetric annular homogeneous example;
see \cite{AC} and \cite[Section 1]{HKM}.
\end{proof}

\section{Free-boundary minimal annuli in spherical caps}
\label{sec:spherical-caps}

In this section, we describe how our methods may be used to obtain rotational symmetry for embedded free-boundary minimal annuli in spherical caps. Aside from testing the method, our main motivation for including this section was to cover the case of free-boundary minimal annuli in the hemisphere, which correspond to minimal tori in $\mathbb{S}^3$, and so compare our methods to the closed setting. We expect that similar methods should also establish rotational symmetry of free-boundary minimal annuli in geodesic balls in the hyperbolic space form $\mathbb{H}^3$. 

Fix $o=(1,0,0,0)\in\Sph^3$ and $0<\vartheta<\pi/2$, and consider the spherical cap
\[
B_\vartheta(o)
=\{X\in\Sph^3:X\cdot o>\cos\vartheta\}.
\]
Let $X:\Sigma\hookrightarrow\overline{B_\vartheta(o)}$ be a smooth properly embedded free-boundary minimal annulus. Write
\[
X=(X_0,Y),\qquad Y=(X_1,X_2,X_3)\in\R^3,
\]
and let $g$ and $\eta$ denote the induced metric and outward unit conormal. Minimality and the free-boundary condition give
\begin{equation}\label{eq:cap-coordinate-equations}
(\Delta+2)X=0,\qquad \partial_\eta X =\cot\vartheta\,X-\csc\vartheta\,o.
\end{equation}
In particular, along $\partial\Sigma$,
\begin{equation}\label{eq:cap-boundary-coordinates}
X_0=\cos\vartheta,\qquad \partial_\eta X_0=-\sin\vartheta,\qquad \partial_\eta Y=\cot\vartheta\,Y.
\end{equation}
Define the modified energy form on $H^1(\Sigma)$
\begin{equation}\label{eq:cap-form}
\Q_\vartheta(f,h) = \int_\Sigma \bigl(\langle\nabla f,\nabla h\rangle-2fh\bigr)\dA -\cot\vartheta\int_{\partial\Sigma}fh\ds.
\end{equation}

\begin{lemma}[The spherical angular coordinate]
\label[lemma]{lem:cap-angle}
There is a unit vector $e\in\R^3$ such that, setting
\[
H=Y\cdot e,\qquad R=|Y-He|, \qquad Y=Re_r(\alpha)+He,
\]
one has $R>0$ and $d\alpha\neq0$ on $\Sigma$. Both boundary restrictions of $\alpha$ are diffeomorphisms onto $\R/2\pi\mathbb Z$. Moreover, the spherical unit normal $N=(N_0,N')$ can be chosen so that
\begin{equation}\label{eq:cap-support}
\chi:=-N_0>0\quad\text{in }\Sigma^\circ, \qquad \chi=0\quad\text{on }\partial\Sigma.
\end{equation}
\end{lemma}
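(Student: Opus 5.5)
The plan is to transfer the Euclidean argument of \cref{prop:angle} and \cref{lem:support} to the cap via the gnomonic (central) projection from the centre of $\Sph^3$. On the open hemisphere $\{X_0>0\}$, which contains $\overline{B_\vartheta(o)}$, the map $X\mapsto Y/X_0$ sends great spheres to affine planes. It therefore carries the cap onto a Euclidean ball of radius $\tan\vartheta$ centred at $0$. Totally geodesic spheres through $o$ become planes through $0$, and geodesics through $o$ become rays through $0$.

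\textbf{Support function.} The first step is the cap analogue of \cref{thm:radial} and \cref{lem:support}. For embedded free-boundary minimal annuli in spherical caps, the two-piece property and the radial graph/boundary convexity results (as in \cite{NZcaps}, Proposition 3.12, and the cap versions of \cite{LM,KM}) give two conclusions. First, $o\notin\Sigma$. Second, the projection of $\Sigma$ along geodesics from $o$, namely $Y/|Y|$, is a homeomorphism onto the closure of a spherical annulus $\Sph^2\setminus(D_+\cup D_-)$, with $D_\pm$ disjoint closed geodesically convex discs in open hemispheres. Locally, $\Sigma$ is transverse to the geodesics from $o$; the velocity of such a geodesic at $X$ is proportional to $o-(X\cdot o)X$. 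Hence, for a suitable choice of unit normal, $N\cdot o=N_0$ has a sign in $\Sigma^\circ$; this uses $N\cdot X=0$. I fix $N$ so that $\chi=-N_0>0$ in the interior. On $\partial\Sigma$ the free-boundary condition makes $N$ tangent to the boundary sphere $\{X_0=\cos\vartheta\}$. The conormal of that sphere lies in $\Span\{X,o\}$ and is orthogonal to $N$, and $N\perp X$, so $N\perp o$ and hence $N_0=0$ there.

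\textbf{Choice of axis.} Next I balance. Integrating $(\Delta+2)X=0$ and using \eqref{eq:cap-coordinate-equations} gives
\[
\int_{\partial\Sigma}\partial_\eta Y\ds=-2\int_\Sigma Y\dA .
\]
This is not zero in general, so the Euclidean flux trick of \cref{prop:angle} does not apply verbatim. Instead I use the Killing fields of $\Sph^3$ fixing $o$, namely the rotations $Y\mapsto AY$ with $A\in\mathfrak{so}(3)$. Their fluxes through the boundary components give
\[
\int_{\Gamma_+}Y\times\partial_\eta Y\ds=-\int_{\Gamma_-}Y\times\partial_\eta Y\ds,
\]
because the conservation law is exact for Killing fields on a minimal surface. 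On the boundary, $\partial_\eta Y=\cot\vartheta\, Y$ makes this integrand vanish, so I need a different conserved quantity. The cleaner route is to apply \cref{lem:cap-flux} directly to the boundary curves $Y/|Y|$ on $\Sph^2$; they lie on the sphere $|Y|=\sin\vartheta$, so $Y/|Y|=Y/\sin\vartheta$. For the balancing, I integrate the Euclidean-type identity obtained from the Killing fields $X_0\partial_{Y_j}-Y_j\partial_{X_0}$ (the rotations moving $o$). Their flux through $\partial\Sigma$ is
\[
\int_{\partial\Sigma}\bigl(X_0\partial_\eta Y-Y\partial_\eta X_0\bigr)\ds=\int_{\partial\Sigma}\bigl(\cos\vartheta\cot\vartheta+\sin\vartheta\bigr)Y\ds=\csc\vartheta\int_{\partial\Sigma}Y\ds .
\]
Conservation for Killing fields forces this flux to vanish on the closed surface with boundary, so $\int_{\partial\Sigma}Y\ds=0$. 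Thus $F_-=-F_+$, and \cref{lem:cap-flux} yields $e=F_+/|F_+|\in\Int D_+$ and $-e\in\Int D_-$.

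\textbf{Angular coordinate.} The radial image avoids $\pm e$, so $R>0$ on $\Sigma$, and $\alpha$ is defined exactly as in \cref{prop:angle}. The boundary diffeomorphism property of $\alpha|_{\Gamma_\pm}$ follows from \cref{lem:cap-flux}, because $\mathcal P_{\pm e}(Y)=e_r(\alpha)$.

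For interior nonvanishing, suppose $d\alpha_q=0$. Then $e_\theta=(0,e\times e_r)$ is orthogonal to $dX_q(T_q\Sigma)$, and also to $X(q)$ since $Y\cdot e_\theta=0$. Therefore $e_\theta=\pm N(q)$, giving $N_0(q)=0$ and contradicting $\chi>0$.

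\textbf{Main obstacle.} The hard step is the first: importing the radial-graph and convexity theorem in the cap setting, including geodesic convexity of $D_\pm$ and their containment in open hemispheres. I would cite the cap versions of the two-piece property, and check convexity directly via the boundary geodesic curvature, computed from the umbilicity of the boundary sphere as in \cref{lem:boundary-B}.
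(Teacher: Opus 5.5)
Your outline agrees with the paper everywhere except at the step you flag as the main obstacle, and that step is a genuine gap. The following parts are all as in the paper:
\begin{itemize}
\item the support function: $\chi>0$ from the radial transversality of \cite{NZcaps}, and $N_0=0$ on $\partial\Sigma$ because $N\perp X,\eta$ with $\eta\in\Span\{X,o\}$;
\item the balancing: your flux of the Killing fields $X_0\partial_{Y_j}-Y_j\partial_{X_0}$ is exactly the paper's Green identity for $X_0$ and $Y$, giving $\int_{\partial\Sigma}Y\ds=0$;
\item the use of \cref{lem:cap-flux}, and the proof that $d\alpha\neq0$ via $N_0\neq0$.
\end{itemize}
The gnomonic projection you open with is never used.

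The problem is that you take as given that $Y/|Y|$ maps $\Sigma$ homeomorphically onto $\Sph^2\setminus(\Int D_+\cup\Int D_-)$, with $D_\pm$ disjoint, geodesically convex and in open hemispheres. You attribute this to unspecified ``cap versions of \cite{LM,KM}''. The paper takes from \cite{NZcaps} only $o\notin X(\Sigma)$ and the local transversality $\langle N,\partial_\rho\rangle>0$. The global picture is what the proof of this lemma has to establish. It is also indispensable: $e\in\Int D_+$ gives $R>0$ only if the radial image misses $\Int D_\pm$, and \cref{lem:cap-flux} needs the convexity and hemisphere hypotheses.

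Your proposed repair does not close this gap. Umbilicity of the boundary sphere gives only $A(T,\eta)=0$. It does not determine the sign of the geodesic curvature $\kappa=-\sin\vartheta\,A(\eta,\eta)$ of $n=Y/\sin\vartheta$. In \cref{lem:boundary-B} the sign likewise came from a boundary-point lemma, not from umbilicity.

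The missing idea is that $\chi$ is a positive Jacobi function. It satisfies $\Delta\chi+|A|^2\chi=0$, with $\chi>0$ inside and $\chi=0$ on $\partial\Sigma$. Hopf's lemma then gives $\partial_\eta\chi=\sin\vartheta\,A(\eta,\eta)<0$, hence $\kappa>0$. From there \cite[Proposition~2.1]{BL} and the hemisphere argument give the convex discs.

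Disjointness and injectivity still need a separate global argument, which the paper supplies in three steps:
\begin{itemize}
\item The Jacobian $J_n=\chi/|Y|^3>0$ makes $n$ a local diffeomorphism in the interior.
\item The collar computation $n_s=0$, $n_{ss}\cdot N'=\kappa/\sin^2\vartheta>0$ shows that the interior side of each boundary circle lands on the exterior side of $D_\pm$.
\item Capping off with discs mapped homeomorphically onto $D_\pm$ gives a local homeomorphism $\Sph^2\to\Sph^2$, hence a one-sheeted covering.
\end{itemize}
Only after this do $e\in\Int D_+$ and $-e\in\Int D_-$ yield $R>0$ and the boundary angular diffeomorphisms.
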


\begin{proof}
The two-piece and radial-transversality results of Naff--Zhu \cite[Corollary~3.6 and Proposition~3.12]{NZcaps} give $o\notin X(\Sigma)$ and a normal for which
\[
\langle N,\partial_\rho\rangle>0 \quad\text{in }\Sigma^\circ,
\]
where $\rho$ is distance from $o$. Since
\[
X=(\cos\rho,\sin\rho\,n),\qquad n:=\frac{Y}{|Y|}, \qquad \langle N,\partial_\rho\rangle=-\frac{N_0}{\sin\rho},
\]
this proves \cref{eq:cap-support}. The boundary vanishing also follows directly from $\langle N,X\rangle=\langle N,\partial_\eta X\rangle=0$.

We first identify the radial image. Use the convention
\[
A(V,W)=\langle D_VN,dX(W)\rangle.
\]
Then $N$ satisfies $\Delta N+|A|^2N=0$, hence $\Delta\chi+|A|^2\chi=0$. Hopf's lemma gives $\partial_\eta\chi<0$. Since the boundary sphere is umbilic, $A(T,\eta)=0$ for a unit boundary tangent $T$, and
\[
\partial_\eta\chi =\sin\vartheta\,A(\eta,\eta).
\]
Thus, $A(\eta,\eta)<0$ on both boundary components. 

On $\partial\Sigma$, the curve $n=Y/\sin\vartheta$ has, with respect to its round arclength $\ell$,
\[
n_{\ell\ell}=-n-\kappa N', \qquad \kappa=-\sin\vartheta\,A(\eta,\eta)>0.
\]
Each boundary curve therefore bounds a geodesically convex disc $D_\pm\subset\Sph^2$, contained in an open hemisphere, whose outward conormal is $N'$; see \cite[Proposition~2.1]{BL} and the hemisphere argument in \cref{lem:boundary-B}.

The radial map $n$ is a local diffeomorphism in the interior: its Jacobian is
\[
J_n=\frac{\chi}{|Y|^3}>0.
\]
It maps the interior side of each boundary to the exterior side of $D_\pm$. Indeed, let $s\geq0$ be inward $g$-distance from the boundary and let $\delta_D$ be signed round distance from $\partial D_\pm$, positive outside. At $s=0$,
\[
n_s=0,\qquad n_{ss}\cdot N' =-\frac{A(\eta,\eta)}{\sin\vartheta} =\frac{\kappa}{\sin^2\vartheta},
\]
so
\[
\delta_D(n(p,s)) =\frac{\kappa(p)}{2\sin^2\vartheta}s^2+O(s^3).
\]
As in \cref{lem:collar}, the nonzero tangential derivative and this expansion give a homeomorphism between boundary neighbourhoods on the indicated sides.

Fill the two boundary circles of $\Sigma$ by discs and extend their boundary maps homeomorphically onto $D_\pm$. The resulting map $\Sph^2\to\Sph^2$ is a local homeomorphism, including across the two boundary circles. It is therefore a one-sheeted covering. Consequently, $D_+\cap D_-=\varnothing$ and
\[
n:\Sigma \rightarrow \Sph^2\setminus(\Int D_+\cup\Int D_-)
\]
is a homeomorphism, with interior a diffeomorphism.

Green's identity and \cref{eq:cap-coordinate-equations,eq:cap-boundary-coordinates} give, componentwise,
\[
\begin{aligned}
0 =\int_\Sigma(X_0\Delta Y-Y\Delta X_0)\dA = \int_{\partial\Sigma}(X_0\partial_\eta Y-Y\partial_\eta X_0)\ds =\csc\vartheta\int_{\partial\Sigma}Y\ds.
\end{aligned}
\]
Since $Y=\sin\vartheta\,n$ and $ds=\sin\vartheta\,d\ell$ on the boundary,
\[
\int_{\partial D_+}n\,d\ell+ \int_{\partial D_-}n\,d\ell=0.
\]
The convex-cap lemma \cref{lem:cap-flux} therefore supplies $e\in\Int D_+$ with $-e\in\Int D_-$. The radial image avoids both poles, giving $R>0$, and the same lemma gives the boundary angular diffeomorphisms.

Finally,
\[
d\alpha(v) =\frac{\langle(0,e_\theta),dX(v)\rangle}{R}.
\]
If this covector vanished at an interior point, then $(0,e_\theta)$, which is tangent to $\Sph^3$, would be parallel to $N$. This would force $N_0=0$, contradicting \cref{eq:cap-support}. Boundary nonvanishing follows from the boundary angular diffeomorphisms.
\end{proof}

\begin{theorem}[Spherical-cap spectral rigidity]
\label[theorem]{thm:cap-rigidity}
For every smooth properly embedded free-boundary minimal annulus in $\overline{B_\vartheta(o)}$, with $0<\vartheta<\pi/2$,
\[
\ind\Q_\vartheta=1,\qquad \ker\Q_\vartheta=\Span\{X_1,X_2,X_3\}.
\]
The eigenvalues of
\[
(\Delta+2)f=0\quad\text{in }\Sigma^\circ, \qquad \partial_\eta f=\tau f\quad\text{on }\partial\Sigma,
\]
indexed with multiplicity, satisfy
\[
\tau_0=-\tan\vartheta < \tau_1=\tau_2=\tau_3=\cot\vartheta < \tau_4.
\]
In particular, $\Sigma$ is rotationally symmetric.
\end{theorem}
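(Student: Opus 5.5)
The plan is to transplant the argument of \cref{sec:stream,sec:operator,sec:spectral,sec:conclusion} verbatim, using \cref{lem:cap-angle} in place of \cref{prop:angle,lem:support}.

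\textbf{Stream coordinates.} Writing $Y=Re_r(\alpha)+He$ and expanding $e^{-i\alpha}(\Delta+2)(Re^{i\alpha})=0$ as in \cref{lem:weighted-angle}, I expect
\[
\Delta R=RG-2R,\qquad \diver(R^2\nabla\alpha)=0,\qquad \partial_\eta R=\cot\vartheta\,R,\qquad \partial_\eta\alpha=0,
\]
where $G=|\nabla\alpha|^2>0$ by \cref{lem:cap-angle}. The proof of \cref{prop:stream-cylinder} then goes through unchanged: $d\beta=R^2\star d\alpha$ is closed, vanishes on $\partial\Sigma$, and hence is exact, which gives a diffeomorphism $(\beta,\alpha):\Sigma\to C$. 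The metric formulas of \cref{lem:metric} also hold unchanged.

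\textbf{The transformed form.} Repeating the integration by parts of \cref{prop:form-identity}, the extra $-2R$ in $\Delta R$ cancels the $-2fh$ term of \cref{eq:cap-form}, and the boundary term $\cot\vartheta R$ in $\partial_\eta R$ cancels the $\cot\vartheta$ boundary integral. So
\[
\Q_\vartheta(Ru,Rv)=\mathfrak q_a(u,v),\qquad a=R^4,
\]
exactly as in the ball. Likewise $(\Delta+2)(Ru)=-RG\,\Aop u$, and $\partial_\eta(Ru)=\tau Ru$ is equivalent to $\partial_\eta u=(\tau-\cot\vartheta)u$. Since $H$ satisfies $(\Delta+2)H=0$ and $\partial_\eta H=\cot\vartheta H$, the quotient $t=H/R$ satisfies $\Aop t=0$ with $t_\beta=0$ on $\partial C$.

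\textbf{Monotonicity of $t$.} This is the main new step. The unit normal $N\in T_X\Sph^3$ is orthogonal to $X,X_\beta,X_\alpha$, so it is the normalised four-dimensional cross product of these vectors. Its $o$-component is therefore, up to sign and the factor $\sqrt{\det g}=(R^2G)^{-1}$, the $3\times3$ minor $\det(Y,Y_\beta,Y_\alpha)$. The computation \cref{eq:triple-product} gives $\det(Y,Y_\beta,Y_\alpha)=-R^3t_\beta$, whence
\[
t_\beta=\pm\frac{\chi}{R^5G}.
\]
By \cref{eq:cap-support} this has a sign; after possibly replacing $\beta$ by $-\beta$, we get $t_\beta>0$ in $C^\circ$. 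Checking the orientation and sign bookkeeping here is where I expect the most care to be needed.

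\textbf{Index and kernel.} \Cref{thm:monotone-criterion} then gives $\ind\mathfrak q_a=1$ and $\ker\Aop=\Span\{t,\cos\alpha,\sin\alpha\}$. Multiplying by $R$ yields $\ind\Q_\vartheta=1$ and $\ker\Q_\vartheta=\Span\{H,R\cos\alpha,R\sin\alpha\}=\Span\{X_1,X_2,X_3\}$.

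\textbf{The Robin spectrum.} First, $X_0$ is an eigenfunction with $\tau=-\tan\vartheta$, by \cref{eq:cap-boundary-coordinates}. Next I check that the Dirichlet problem for $\Delta+2$ is nondegenerate and positive, so that the eigenvalue problem is well posed and variational.
\begin{itemize}
\item For $\varphi\in H^1_0$ one has $\Q_\vartheta(\varphi,X_0)=0$ and $\Q_\vartheta(X_0)<0$, so index one forces $\Q_\vartheta\geq0$ on $H^1_0$.
\item If $\varphi$ is a Dirichlet null function, then $\Q_\vartheta(\varphi,v)=-\int_{\partial\Sigma}\partial_\eta\varphi\,v$. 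Unless $\partial_\eta\varphi=0$, perturbing $X_0$ by a suitable $v$ produces a two-dimensional negative space. If $\partial_\eta\varphi=0$, unique continuation gives $\varphi=0$.
\end{itemize}
For eigenfunctions, $\Q_\vartheta(f)=(\tau-\cot\vartheta)\int_{\partial\Sigma}f^2$, and distinct eigenvalues are $\Q_\vartheta$-orthogonal. Index one therefore allows exactly one $\tau<\cot\vartheta$, namely $\tau_0=-\tan\vartheta$, and the kernel identification gives multiplicity three at $\cot\vartheta$.

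\textbf{Rotational symmetry.} Rotations of the $\R^3$ factor fixing $o$ give the Jacobi-type functions $\langle N,(0,\omega\times Y)\rangle$, which lie in $\ker\Q_\vartheta$. Comparing these with $\Span\{X_1,X_2,X_3\}$ through their boundary behaviour, I would show that the rotation about $e$ yields the zero function, i.e.\ it is tangent to $\Sigma$. If this is awkward, I would fall back on the Fraser--Schoen argument adapted to caps: a Hopf-differential argument for surfaces embedded by the first eigenfunctions $X_1,X_2,X_3$.
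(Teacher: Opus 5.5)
\textbf{Verdict.} Through the eigenvalue identities your proposal is sound. The gap is the final rotational-symmetry step.

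\textbf{Agreement with the paper.} The angular equations, the stream cylinder, the form identity, the quotient $t=H/R$, and the four-dimensional determinant giving $t_\beta=\pm\chi/(R^5G)$ are exactly the paper's steps, and the appeal to the monotone-kernel theorem is the same.

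\textbf{The Robin spectrum: a different but valid route.} The paper uses the ground-state identity for the positive function $X_0$. This gives $\lambda_1^D(-\Delta)>2$ and simplicity of $\tau_0=-\tan\vartheta$ directly, independently of the index computation. You instead derive Dirichlet positivity from index one together with $\Q_\vartheta(\varphi,X_0)=0$ for $\varphi\in H^1_0$. You then get nondegeneracy by a perturbation argument and unique continuation. This works once two slips are fixed:
\begin{enumerate}
\item For a Dirichlet null function, $\Q_\vartheta(\varphi,v)=+\int_{\partial\Sigma}v\,\partial_\eta\varphi\ds$, not $-\int_{\partial\Sigma}v\,\partial_\eta\varphi\ds$.
\item It is $\varphi$, not $X_0$, that you perturb, by some $v$ in the $\Q_\vartheta$-orthogonal complement of $X_0$.
\end{enumerate}
The ground-state identity is shorter and gives strict positivity at once.

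\textbf{The gap: rotational symmetry.} The functions $\langle N,(0,\omega\times Y)\rangle$ generated by rotations fixing $o$ are Killing Jacobi fields. They satisfy $(\Delta+|A|^2+2)\phi=0$ with $\partial_\eta\phi=\cot\vartheta\,\phi$. So they lie in the kernel of the second variation of area, which differs from $\Q_\vartheta$ by $-\int_\Sigma|A|^2\phi^2\dA$. They are not in $\ker\Q_\vartheta$ unless $|A|\phi\equiv0$. Hence there is nothing to compare with $\Span\{X_1,X_2,X_3\}$, and your primary argument has no foundation. Recall that $\Q_\vartheta$ is a modified energy form, not the Morse index form of area.

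\textbf{What is needed instead.} You need a Fraser--Schoen type characterisation in caps. It must turn the fact that $X_1,X_2,X_3$ realise $\tau_1=\tau_2=\tau_3=\cot\vartheta$, above the ground state $X_0$, into rotational symmetry. This is exactly Lima--Menezes \cite[Theorem~C]{LMcaps}, which the paper invokes once the eigenvalue identities are established. Your fallback points in this direction, but as written it is a plan to re-prove that theorem rather than an argument. Cite it instead.
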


\begin{proof}
Choose the axis from \cref{lem:cap-angle}. Expanding $(\Delta+2)(Re^{i\alpha})=0$ and the common boundary condition of $X_1,X_2$ gives
\begin{equation}\label{eq:cap-angular-equations}
\Delta R=(G-2)R,\qquad \diver(R^2\nabla\alpha)=0,\qquad G:=|\nabla\alpha|^2>0,
\end{equation}
and
\begin{equation}\label{eq:cap-angular-boundary}
\partial_\eta R=\cot\vartheta\,R,\qquad \partial_\eta\alpha=0.
\end{equation}
The proof of \cref{prop:stream-cylinder} now applies:
\[
d\beta= R^2\star d\alpha,\qquad (\beta,\alpha):\Sigma \rightarrow C=[b_-,b_+]\times(\R/2\pi\mathbb Z)
\]
is a diffeomorphism up to the boundary, and
\[
g=\frac{d\beta^2}{R^4G}+\frac{d\alpha^2}{G}, \qquad dA=\frac{1}{R^2G}\,d\beta\,d\alpha.
\]

For smooth $u,v$, integration by parts gives
\begin{align}
\Q_\vartheta(Ru,Rv)
&=\int_\Sigma \left(R^2\langle\nabla u,\nabla v\rangle -Ruv\Delta R-2R^2uv\right)\dA \notag\\
&=\int_\Sigma R^2\left(\langle\nabla u,\nabla v\rangle-Guv\right)\dA
 \notag\\
&=\int_C \left(R^4u_\beta v_\beta+u_\alpha v_\alpha-uv\right)\dbda.
\label{eq:cap-form-transform}
\end{align}
The boundary terms cancel by \cref{eq:cap-angular-boundary}; the interior curvature terms cancel by \cref{eq:cap-angular-equations}. As in \cref{prop:form-identity}, the identity extends to $H^1(C)$.

Set
\[
\Aop=-\partial_\beta(R^4\partial_\beta) -\partial_{\alpha\alpha}-1
\]
with Neumann conditions at $b_\pm$. The corresponding pointwise and boundary identities are
\[
(\Delta+2)(Ru)=-RG\,\Aop u, \qquad \partial_\eta(Ru)-\cot\vartheta\,Ru =R\partial_\eta u.
\]
Since $h$ satisfies the same equation and Robin condition as $X_1,X_2$, the quotient $t=h/R$ satisfies
\[
\Aop t=0,\qquad t_\beta=0\quad\text{on }\partial C.
\]

To determine the sign of $t_\beta$, observe that
\[
Y\cdot(Y_\beta\times Y_\alpha)=-R^3t_\beta.
\]
The four-dimensional determinant of $X,X_\beta,X_\alpha,o$ gives
\[
\left|Y\cdot(Y_\beta\times Y_\alpha)\right| =\chi\sqrt{\det g}.
\]
After reversing $\beta$ if necessary,
\[
t_\beta=\frac{\chi}{R^5G}>0 \quad\text{in }C^\circ, \qquad t_\beta=0\quad\text{on }\partial C.
\]
Thus \cref{thm:monotone-criterion} applies with $a=R^4$. Together with \cref{eq:cap-form-transform}, it gives
\[
\ind\Q_\vartheta=1,\qquad \ker\Q_\vartheta = R\,\Span\{t,\cos\alpha,\sin\alpha\} =\Span\{X_1,X_2,X_3\}.
\]

It remains to identify the spectral parameter. As in \cite{LMcaps}, the positive function $X_0\geq\cos\vartheta>0$ satisfies $(\Delta+2)X_0=0$ and $\partial_\eta X_0=-\tan\vartheta\,X_0$. The ground-state identity is
\begin{equation}\label{eq:cap-ground-state}
\int_\Sigma\left(|\nabla f|^2-2f^2\right)\dA +\tan\vartheta\int_{\partial\Sigma}f^2\ds = \int_\Sigma X_0^2
\left|\nabla\left(\frac{f}{X_0}\right)\right|^2\dA.
\end{equation}
For zero-trace functions this also gives $\lambda_1^D(-\Delta)>2$, so the frequency-$2$ Dirichlet-to-Neumann operator is well defined. Equation~\eqref{eq:cap-ground-state} shows that its lowest eigenvalue is $-\tan\vartheta$, simple with eigenfunction $X_0$. 

The index of $\Q_\vartheta$ counts the eigenvalues strictly below $\cot\vartheta$. Indeed, decomposition into a solution of $(\Delta+2)h=0$ with the prescribed trace and a zero-trace remainder is orthogonal for $\Q_\vartheta$, and the zero-trace form is positive. The index-one conclusion and the three-dimensional kernel therefore give the stated eigenvalue identities. Rotational symmetry follows from Lima--Menezes \cite[Theorem~C]{LMcaps}.
\end{proof}

\appendix
\section{Stream coordinates on the model catenoid}
\label{sec:model}

\begin{proposition}\label[proposition]{prop:model}
The parametrisation \cref{eq:critical-param} is a properly embedded
free-boundary minimal annulus.  Its stream coordinate, with
$\beta(0)=0$, is
\begin{equation}\label{eq:model-beta}
 \beta(s)=c^2\left(\frac{s}{2}+\frac{\sinh 2s}{4}\right).
\end{equation}
In this coordinate,
\begin{equation}\label{eq:model-q}
 R=c\cosh s,\qquad H=cs,\qquad
 t=\frac{s}{\cosh s},\qquad
 t_\beta=\frac{1-s\tanh s}{c^2\cosh^3s}.
\end{equation}
\end{proposition}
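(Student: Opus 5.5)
The plan is a direct computation on the parametrisation \cref{eq:critical-param}, $X(s,\alpha)=c(\cosh s\,e_r(\alpha)+s e)$, with axis $e$, so that $R=c\cosh s$ and $H=h=cs$ at once.

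\emph{Minimality.} I would compute $X_s=c(\sinh s\,e_r+e)$ and $X_\alpha=c\cosh s\,e_\theta$. These give $|X_s|^2=|X_\alpha|^2=c^2\cosh^2s$ and $X_s\cdot X_\alpha=0$, so $(s,\alpha)$ is isothermal. Then $X_{ss}+X_{\alpha\alpha}=c\cosh s\,e_r-c\cosh s\,e_r=0$, and a conformal harmonic map is minimal.

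\emph{Free boundary.} At $s=\pm T$ I would first check $|X|^2=c^2(\cosh^2T+T^2)=1$. Using $c=1/(T\cosh T)$ this reduces to $1+T^2/\cosh^2T=T^2$, i.e.\ $\cosh^2T=T^2\sinh^2T$ after multiplying by $\cosh^2 T$ and using $\cosh^2-\sinh^2=1$; this is exactly $T\tanh T=1$. Orthogonality means $X\parallel X_s$ at the boundary, which follows from $\sinh T/\cosh T=1/T$, i.e.\ $X_s=c\sinh T(e_r+Te)\cdot$ const and $X=c\cosh T(e_r+\tanh T\,T e)$ with $\tanh T\,T=1$. For embeddedness and properness: $(s,\alpha)\mapsto(c\cosh s,\alpha,cs)$ is injective, and interior points have $|X|<1$ because $\cosh^2s+s^2$ increases in $|s|$.

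\emph{Stream coordinate.} Here $G=|\nabla\alpha|^2=1/(c^2\cosh^2s)$. Since $d\alpha$ and $ds$ are orthogonal with equal conformal weight, $\star d\alpha=\pm ds$. Therefore $d\beta=R^2\star d\alpha=\pm c^2\cosh^2s\,ds$, and choosing the sign and $\beta(0)=0$ gives $\beta=c^2(s/2+\sinh 2s/4)$. Then $t=H/R=s/\cosh s$ and $t_s=(1-s\tanh s)/\cosh s$, and dividing by $d\beta/ds=c^2\cosh^2s$ yields \eqref{eq:model-q}.

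The only delicate step is the sign of $\star$, i.e.\ the orientation convention relative to $N$. I would fix it by comparing with \cref{eq:q-support}: $t_\beta$ must be positive, and it is, since $1-s\tanh s>0$ for $|s|<T$ and this quantity is proportional to the support function $c(1-s\tanh s)$.
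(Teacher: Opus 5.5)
Your proposal is correct and follows the same direct computation as the paper: isothermal coordinates with $X_{ss}+X_{\alpha\alpha}=0$, then $|X(\pm T,\alpha)|=1$ from $T\tanh T=1$, then $G=R^{-2}$ so that $d\beta=R^2\,ds$ once the sign is fixed, and finally $t_\beta=t_s/\beta_s$, whose positivity matches the support function $c(1-s\tanh s)$ (the paper writes out the normal $N=\sech s\,e_r-\tanh s\,e$ explicitly). The one slip is in the orthogonality step, where the displayed formulas for $X_s$ and $X$ are wrong as written: correctly, $X_s=c(\sinh T\,e_r+e)$ and $X(T,\alpha)=c(\cosh T\,e_r+T\,e)=T\,X_s(T,\alpha)$ since $\cosh T=T\sinh T$, and likewise $X=-T\,X_s$ at $s=-T$, which gives $\eta=X$ on both boundary circles.
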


\begin{proof}
The function $s\mapsto s\tanh s$ is strictly increasing for $s>0$,
starts at zero, and tends to infinity.  Hence \cref{eq:T} has a
unique positive solution.
Writing $e_r=(\cos\alpha,\sin\alpha,0)$ and $e=(0,0,1)$, the
parametrisation is
\[
 X=c\cosh s\,e_r+cs\,e.
\]
Its derivatives satisfy
\[
 X_s=c\sinh s\,e_r+ce,
 \qquad X_\alpha=c\cosh s\,e_\theta,
 \qquad |X_s|^2=|X_\alpha|^2=c^2\cosh^2s,
 \qquad X_s\cdot X_\alpha=0.
\]
Also $X_{ss}+X_{\alpha\alpha}=0$, proving minimality.
Since $H=cs$ is strictly increasing and $\alpha$ has period $2\pi$,
the parametrisation is embedded.
The identity $T\tanh T=1$ gives
\[
 \cosh^2T+T^2=T^2\cosh^2T.
\]
Therefore $|X(\pm T,\alpha)|=1$, and
$|X(s,\alpha)|<1$ for $|s|<T$ because
$\cosh^2s+s^2$ is strictly increasing for $s>0$.
At $s=T$,
\[
 \eta=\frac{X_s}{c\cosh T} =\tanh T\,e_r+\sech T\,e =X(T,\alpha),
\]
and at $s=-T$ the outward conormal is $-X_s/(c\cosh T)=X(-T,\alpha)$. Thus the boundary is free.

The metric is $g=R^2(ds^2+d\alpha^2)$, so $G=R^{-2}$. Choose the sign of the stream coordinate so that $d\beta=R^2ds$; integration gives \cref{eq:model-beta}. A normal with positive support is
\[
 N=\sech s\,e_r-\tanh s\,e,
 \qquad \xi=X\cdot N=c(1-s\tanh s).
\]
Differentiating $t=s/\cosh s$ and dividing by $\beta_s=c^2\cosh^2s$ gives \cref{eq:model-q}. The numerator is positive for $|s|<T$ and zero at $s=\pm T$. It also agrees with $\xi/(R^5G)$ in \cref{eq:q-support}.
\end{proof}

\section{The explicit Bernoulli solution and its homogeneous extension} \label{sec:explicit}

\begin{proposition}\label[proposition]{prop:explicit}
For every $e\in\Sph^2$, the domain and function in \cref{eq:classified-domain,eq:classified-function} solve \cref{eq:OEP}. Their dual surface map $X=\nabla_0u+up$ is \cref{eq:catenoid}.
\end{proposition}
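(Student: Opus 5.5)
The plan is to verify everything by direct computation in the axial coordinate. Fix $e$ and write $s=p\cdot e$, so that $u(p)=\phi(s)$ with $\phi(s)=c(1-s\artanh s)$ on $|s|<s_*$. For a zonal function on the round sphere we have
\[
\Delta_0\phi(s)=\bigl((1-s^2)\phi'\bigr)'=(1-s^2)\phi''-2s\phi',
\qquad
\nabla_0\phi(s)=\phi'(s)\,(e-sp),
\qquad
|\nabla_0 s|^2=1-s^2 .
\]
I will check each condition of \cref{eq:OEP} in turn with these formulas.

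\emph{The equation.} First, $\phi'=-c\bigl(\artanh s+\tfrac{s}{1-s^2}\bigr)$, so that
\[
(1-s^2)\phi'=-c\bigl((1-s^2)\artanh s+s\bigr).
\]
Differentiating gives $-c(-2s\artanh s+1+1)=-2c(1-s\artanh s)=-2\phi$. Hence $(\Delta_0+2)u=0$.

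\emph{Positivity and the boundary conditions.} The function $s\artanh s$ is even and strictly increasing in $|s|$. So $u>0$ exactly where $|s|\artanh|s|<1$, i.e. $|s|<s_*$, and $u=0$ on $|s|=s_*$. This uses $s_*\artanh s_*=\tanh T\cdot T=1$ from \cref{eq:T}. On the boundary,
\[
|\nabla_0u|=|\phi'(s_*)|\sqrt{1-s_*^2}=c\Bigl(\frac{T}{\cosh T}\cdot\cosh T\cdot\frac{1}{\cosh T}+\sinh T\Bigr)\cdot\frac{1}{\cosh T}\cdots
\]
It is cleaner to compute this as $c\bigl(T\sech T+\tanh T\cosh T\bigr)$. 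With $c=1/(T\cosh T)$ this equals $\sech^2T+\tanh T/T=\sech^2T+\tanh^2T$. That gives $1$ if $\tanh T/T=\tanh^2T$, which is precisely $T\tanh T=1$. The domain $\{|p\cdot e|<s_*\}$ is clearly a smooth spherical annulus and $u$ is smooth on its closure, so \cref{eq:OEP} holds.

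\emph{The dual map.} Parametrise $p=\sech v\,e_r(\alpha)+\tanh v\,e$, so that $s=\tanh v$ and $\artanh s=v$. Then
\[
X=\phi(s)p+\phi'(s)(e-sp)=(\phi-s\phi')p+\phi' e .
\]
We have $\phi-s\phi'=c\bigl(1+\tfrac{s^2}{1-s^2}\bigr)=c\cosh^2v$ and $\phi'=-c(v+\sinh v\cosh v)$. Substituting,
\[
X=c\cosh v\,e_r+c\bigl(\cosh v\sinh v-v-\sinh v\cosh v\bigr)e=c(\cosh v\,e_r-v\,e),
\]
which is \cref{eq:catenoid}. The range $|s|\le s_*$ corresponds to $|v|\le T$.

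No step is a real obstacle. The only point needing care is bookkeeping the constants $c$, $s_*$, $T$ through the identity $T\tanh T=1$ in the boundary gradient computation.
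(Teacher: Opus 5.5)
Your proposal is correct and follows the paper's proof essentially line by line. The shared steps are the zonal Laplacian $(1-s^2)\phi''-2s\phi'$, the monotonicity of $s\,\artanh s$ with $s_*\artanh s_*=T\tanh T=1$, the boundary gradient reduced to $T\tanh T=1$, and the splitting $X=(\phi-s\phi')p+\phi' e$ with $\phi-s\phi'=c/(1-s^2)$ followed by the substitution $v=\artanh s$. The only blemish is the abandoned first line of your boundary-gradient computation: as written it does not equal $|\phi'(s_*)|\sqrt{1-s_*^2}$, so delete it. The line after it, $c(T\sech T+\sinh T)=\sech^2T+\tanh^2T=1$, is the correct computation.
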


\begin{proof}
For $s(p)=p\cdot e$,
\[
\nabla_0s=e-sp,\qquad |\nabla_0s|^2=1-s^2,\qquad \Delta_0s=-2s.
\]
Thus a function depending only on $s$ satisfies
\begin{equation}\label{eq:axisymmetric-Laplacian}
\Delta_0f(s)=(1-s^2)f''(s)-2sf'(s).
\end{equation}
For $f(s)=c(1-s\,\artanh s)$,
\begin{equation}\label{eq:f-derivatives}
f'(s)=-c\left(\artanh s+\frac{s}{1-s^2}\right),
\qquad
f''(s)=-\frac{2c}{(1-s^2)^2}.
\end{equation}
Substitution gives $(\Delta_0+2)f=0$.

The function $s\,\artanh s$ is even and strictly increasing for $s>0$, and
\[
s_*\artanh s_*=T\tanh T=1.
\]
Hence $f>0$ for $|s|<s_*$ and $f=0$ at $s=\pm s_*$. At either endpoint,
\[
|\nabla_0f| =\sqrt{1-s_*^2}\,|f'(s_*)| =\frac{c}{s_*\sqrt{1-s_*^2}}=1.
\]
This verifies all the boundary conditions.

Finally, writing $p=\sqrt{1-s^2}\,e_r+s e$, we obtain
\[
\nabla_0f+fp=f'e+(f-sf')p, \qquad f-sf'=\frac{c}{1-s^2}.
\]
Consequently,
\[
X(p) =\frac{c}{\sqrt{1-s^2}}\,e_r-c\,\artanh s\,e.
\]
The substitution $v=\artanh s$ gives \cref{eq:catenoid}. Its free-boundary normalisation agrees with \cref{prop:model}.
\end{proof}

\section{Angular period and finite covers}
\label{sec:period}

In this appendix we make some remarks on why embeddedness is essential for the spectral characterisation. In particular, we consider immersed annuli which are $k$-fold covers of embedded annuli.

\begin{proposition}[Finite covers]
\label[proposition]{prop:cover-index}
Let $\Sigma\subset\overline{\B^3}$ be a smooth properly
embedded free-boundary minimal annulus, and let
$\varpi:\widetilde\Sigma\to\Sigma$ be a connected smooth
$k$-sheeted covering, where $k\geq1$.
Equip $\widetilde\Sigma$ with the pullback of the induced
metric on $\Sigma$. Then the modified energy form
\[
\Q_{\widetilde\Sigma}(f)
=
\int_{\widetilde\Sigma}|\nabla f|^2\dA
-\int_{\partial\widetilde\Sigma}f^2\ds,
\qquad f\in H^1(\widetilde\Sigma),
\]
satisfies
\[
\ind\Q_{\widetilde\Sigma}=2k-1.
\]
In particular, this formula holds for the connected
$k$-fold cover of the critical catenoid.
\end{proposition}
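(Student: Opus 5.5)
We pull the stream coordinates of \cref{prop:stream-cylinder} back to $\widetilde\Sigma$ and rerun the cylinder analysis of \cref{sec:operator,sec:spectral} with angular period $2\pi k$. Since $\varpi$ is a connected $k$-sheeted covering of an annulus, $\widetilde\Sigma$ is an annulus, and $\varpi$ corresponds to the index-$k$ subgroup of $\pi_1(\Sigma)\cong\mathbb Z$. In coordinates $\varpi$ is therefore
\[
[b_-,b_+]\times(\R/2\pi k\mathbb Z)\to C,\qquad (\beta,\tilde\alpha)\mapsto(\beta,\tilde\alpha\bmod 2\pi).
\]
Call the domain $\widetilde C$. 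Set $\widetilde R=R\circ\varpi$ and $\tilde a=\widetilde R^4$; these are smooth, positive and $2\pi$-periodic in $\tilde\alpha$. Everything in \cref{lem:weighted-angle,prop:stream-cylinder,lem:metric} is local, so it lifts, and the identity of \cref{prop:form-identity} holds verbatim:
\[
\Q_{\widetilde\Sigma}(\widetilde Ru,\widetilde Rv)=\int_{\widetilde C}\bigl(\tilde a\,u_\beta v_\beta+u_{\tilde\alpha}v_{\tilde\alpha}-uv\bigr)\,d\beta\,d\tilde\alpha .
\]
Multiplication by $\widetilde R$ is an isomorphism of $H^1$, so $\ind\Q_{\widetilde\Sigma}$ equals the number of negative Neumann eigenvalues of $\widetilde\Aop=-\partial_\beta(\tilde a\partial_\beta)-\partial_{\tilde\alpha\tilde\alpha}-1$ on $\widetilde C$.

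Next, the lift $\tilde t=t\circ\varpi$ still satisfies $\widetilde\Aop\tilde t=0$, $\tilde t_\beta=0$ on $\partial\widetilde C$ and $\tilde t_\beta>0$ inside, by \cref{prop:monotone}. The proofs of \cref{lem:intertwining,lem:positive-Dirichlet,thm:monotone-criterion} use only a maximum principle, Hopf's lemma and compactness of the circle factor, so they apply unchanged on $\widetilde C$ with period $2\pi k$. We conclude that every eigenfunction of $\widetilde\Aop$ with negative eigenvalue is independent of $\beta$, hence an eigenfunction of $-\frac{d^2}{d\tilde\alpha^2}-1$ on $\R/2\pi k\mathbb Z$.

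That circle operator has eigenfunctions $\cos(n\tilde\alpha/k)$ and $\sin(n\tilde\alpha/k)$ with eigenvalues $n^2/k^2-1$. These are negative exactly for $0\le n<k$, which gives $1+2(k-1)=2k-1$ eigenvalues counted with multiplicity. Conversely, each such $v(\tilde\alpha)$ satisfies the Neumann condition and is a genuine eigenfunction of $\widetilde\Aop$, because $\widetilde\Aop v=-v''-v$ when $v$ is independent of $\beta$. Self-adjointness and compact resolvent then give $\ind\Q_{\widetilde\Sigma}=2k-1$. For $k=1$ this recovers \cref{thm:monotone-criterion}.

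The main obstacle is the first step: checking that the covering really is the standard angular unwrapping, so that $\tilde\alpha$ is a global coordinate with period exactly $2\pi k$. By the remark after \cref{prop:angle}, $\alpha$ has winding number $\pm1$ on the generator of $H_1(\Sigma)$, so the lifted angle winds $k$ times along the generator of $H_1(\widetilde\Sigma)$. A smaller concern is confirming that \cref{lem:positive-Dirichlet} used nothing about the period. It did not: only local elliptic tools and compactness enter. The catenoid case follows because $\Ccc$ satisfies the hypotheses by \cref{prop:model}.
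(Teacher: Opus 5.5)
Your proposal is correct and matches the paper's proof essentially step for step. You lift the stream coordinates to $[b_-,b_+]\times(\R/2\pi k\mathbb Z)$, transfer the form identity through multiplication by $R$, use the lifted monotone function $t$ with \cref{lem:intertwining,lem:positive-Dirichlet} to show that negative eigenfunctions are independent of $\beta$, and count the $2k-1$ negative modes of $-\partial_{\alpha\alpha}-1$ on the circle of length $2\pi k$; your covering-space argument that the cover is the standard angular unwrapping is a detail the paper leaves implicit.
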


\begin{proof}
The stream coordinates on $\Sigma$ lift to coordinates
on $\widetilde\Sigma$ identifying it with
\[
C_k=[b_-,b_+]\times\bigl(\R/(2\pi k\mathbb Z)\bigr).
\]
In these coordinates, the covering projection is $(\beta,[\alpha]_{2\pi k})
\mapsto (\beta,[\alpha]_{2\pi}).$ We use the same notation for the lifted functions $R$ and $t=h/R$. The form identity in
\cref{prop:form-identity} lifts to
\[
\Q_{\widetilde\Sigma}(Rv) = \mathfrak q_a(v) := \int_{C_k} \left(a v_\beta^2+v_\alpha^2-v^2\right)\dbda, \qquad a=R^4.
\]
Since multiplication by $R$ is an isomorphism of the corresponding $H^1$ spaces, the two forms have the same index.

The lifted function $t$ satisfies
\[
\Aop t=0,\qquad t_\beta=0\quad\text{on }\partial C_k,\qquad t_\beta>0\quad\text{in }C_k^\circ.
\]
The intertwining identity and the positive Dirichlet comparison in \cref{lem:intertwining,lem:positive-Dirichlet} apply unchanged on $C_k$. Consequently, if $\Aop v=\lambda v$ with $\lambda<0$ and Neumann boundary data, then
\[
\Bop(v_\beta)=\lambda v_\beta,\qquad v_\beta|_{\partial C_k}=0,
\]
and hence $v_\beta=0$.

The negative eigenspaces are therefore exactly those of $-\partial_{\alpha\alpha}-1$ on the circle of length $2\pi k$. Its negative modes are
\[
1,\qquad \cos\frac{j\alpha}{k},\quad \sin\frac{j\alpha}{k}, \qquad 1\leq j\leq k-1,
\]
with eigenvalues $-1$ and $j^2/k^2-1$, respectively. Each is also a Neumann eigenfunction of $\Aop$. Their total real dimension is $1+2(k-1)=2k-1,$ which proves the assertion.
\end{proof}

\begin{remark}
The index is taken over all functions in $H^1(\widetilde\Sigma)$, not only those descending to $\Sigma$, and is not the Morse index of the area functional. The additional negative angular modes above do not descend to the original annulus. Thus, the degree-one angular projection in \cref{prop:angle} is essential to the index-one conclusion for embedded annuli.
\end{remark}

\bibliographystyle{amsalpha}
\bibliography{main}

\end{document}